\documentclass[reqno]{amsart}
\usepackage{amssymb, amsmath, amsthm, amscd, mathrsfs}

\usepackage{paralist}
\usepackage{color}
\usepackage{cite}
\usepackage{bigints}
\usepackage{dsfont}
\usepackage{empheq}
\usepackage{amssymb}
\usepackage{cases}
\usepackage{enumitem}
\allowdisplaybreaks

\makeatletter
\renewenvironment{proof}[1][\proofname]{\par
  \pushQED{\qed}%
  \normalfont \topsep6\p@\@plus6\p@\relax
  \trivlist
  \item[\hskip\labelsep\bfseries #1\@addpunct{.}]%
}{%
  \popQED\endtrivlist\@endpefalse
}
\makeatother

\usepackage{caption}
\usepackage{booktabs}

\usepackage{float}
\usepackage[ruled,vlined,linesnumbered,noresetcount]{algorithm2e}

\usepackage[top=3.4cm, bottom=3.4cm, left=4.4cm, right=4.4cm]{geometry}

\usepackage{hyperref}

\newtheorem{theorem}{Theorem}[section]
\newtheorem{lem}[theorem]{Lemma}
\newtheorem{cor}[theorem]{Corollary}
\newtheorem{prop}[theorem]{Proposition}
\newtheorem{defn}[theorem]{Definition}
\newtheorem{rmk}{Remark}
\newtheorem{conj}{Conjecture}

\numberwithin{equation}{section}

\def \d {\mathrm{d}}

\title[Abstract time-fractional Schrödinger equations]{Forward and backward problems for abstract time-fractional Schrödinger equations}

\author{S. E. Chorfi}
\author{F. Et-tahri}
\author{L. Maniar}
\author{M. Yamamoto}

\address{S. E. Chorfi, L. Maniar, Cadi Ayyad University, UCA, Faculty of Sciences Semlalia, Laboratory of Mathematics, Modeling and Automatic Systems, B.P. 2390, Marrakesh, Morocco}
\address{L. Maniar, The Vanguard Center, University Mohammed VI Polytechnic, Benguerir, Morocco}

\address{F. Et-tahri, Faculty of Sciences-Agadir, Lab-SIV, Ibn Zohr University, B.P. 8106, Agadir, Morocco}
\address{M. Yamamoto, Department of Mathematics, Graduate School of Mathematical Sciences, The University of Tokyo, Tokyo, 153-8914, Komaba, Meguro, Japan}
\address{
Department of Mathematics, Faculty of Science, Zonguldak Bulent Ecevit University, Zonguldak, 67100, Turkey
}

\email{s.chorfi@uca.ac.ma, maniar@uca.ac.ma}
\email{fouad.et-tahri@edu.uiz.ac.ma}
\email{myama@ms.u-tokyo.ac.jp}

\makeatletter 
\@namedef{subjclassname@2020}{%
  \textup{2020} Mathematics Subject Classification}
\makeatother

\subjclass[2020]{35R11, 35R30, 35R25}
 \keywords{Fractional Schrödinger equation, backward problem, well-posedness, stability estimate, H\"older stability}

\begin{document}
\begin{abstract}
We investigate forward and backward problems associated with abstract time-fractional Schrödinger equations $\mathrm{i}^\nu \partial_t^\alpha u(t) + A u(t)=0$, $\alpha \in (0,1)\cup (1,2)$ and $\nu\in\{1,\alpha\}$, where $A$ is a self-adjoint operator with compact resolvent on a Hilbert space $H$. This kind of equation, which incorporates the Caputo time-fractional derivative of order $\alpha$, models quantum systems with memory effects and anomalous wave propagation. We first establish the well-posedness of the forward problems in two scenarios: ($\nu=1$,\, $\alpha \in (0,1)$) and ($\nu=\alpha$,\, $\alpha \in (0,1)\cup (1,2)$). Then, we prove well-posedness and stability results for the backward problems depending on the two cases $\nu=1$ and $\nu=\alpha$. Our approach employs the solution's eigenvector expansion along with the properties of the Mittag-Leffler functions, including the distribution of zeros and asymptotic expansions. Finally, we conclude with a discussion of some open problems.
\end{abstract}

\maketitle

\section{Introduction and motivation}
The study of the Schrödinger equation has been central to quantum mechanics and mathematical physics, with applications ranging from wave propagation to quantum computing. In recent years, the fractional generalizations of the Schrödinger equation, which incorporate time-fractional derivatives, have garnered considerable attention due to their ability to model anomalous propagation and memory effects in quantum systems. Such fractional models have found various applications in quantum mechanics, optics and photonics, condensed matter and plasma physics, etc. See, for instance, \cite{Liu23, Na04, Zu21}, the book \cite{La18} and the references therein. The time-fractional Schrödinger equation, typically formulated with the Caputo derivative, extends the classical formulation by introducing a fractional-order parameter $\alpha\in (0,1)\cup (1,2)$, which interpolates between classical and fractional quantum dynamics. Incorporating fractional dynamics disrupts unitarity and time-reversal symmetry, allowing the probability density to either decay or accumulate over time.

A key aspect of studying the Schrödinger equation is understanding its qualitative properties, particularly energy decay, dispersion, and uniqueness. Backward uniqueness, a powerful tool in PDEs, has been widely used to establish unique continuation properties and stability estimates for inverse and ill-posed problems. Fractional derivatives introduce significant mathematical challenges, requiring refined techniques to prove similar properties.

The backward uniqueness property in the context of time-fractional evolution equations is a challenging topic due to the lack of the semigroup law. An initial result was presented in \cite{Ch24} for diffusion-type evolution equations involving self-adjoint operators. The authors even proved a stronger result on logarithmic convexity for the fractional case, which is a relatively recent area of study. This was slightly extended in \cite{Ch25} to include a class of non-symmetric operators, providing some numerical experiments illustrating logarithmic convexity.

To explain the essence of our main contribution, we start with the classical Schrödinger equation with integer derivative:
\begin{equation}\label{iseq}
\begin{cases}
\mathrm{i}\partial_t u +\Delta u =0, &\qquad (t, x) \in (0,T)\times \Omega,\\
u|_{\partial \Omega}=0, &\qquad t \in (0,T),\\
u(0,x)=f(x), &\qquad x\in \Omega,
\end{cases}
\end{equation}
where $\mathrm{i}$ is the imaginary unit and $u : (0,T)\times \Omega \rightarrow\mathbb{C}$ is a complex-valued function describing the wavefunction amplitude associated with the motion of a quantum particle in the bulk $\Omega$. The modulus function $|u(t,\cdot)|^2$ represents the spatial probability density of the particle, up to a constant of proportionality.

The one-parameter family of solution operators $\{S(t) : t \in \mathbb{R}\}$ to \eqref{iseq} forms a $C_0$-unitary group on $(L^2(\Omega),\|\cdot\|)$; in particular
\begin{align*}
\|u(t)\|&=\|f\| \quad\qquad \forall t\in \mathbb{R},\;\forall f\in L^2(\Omega).
\end{align*}
Therefore, the backward well-posedness of \eqref{iseq} is trivial in the integer derivative case. However, in the fractional case, the unitary group property is no longer available, making the backward problems challenging. More precisely, we are concerned with an abstract time-fractional Schrödinger equation posed in a Hilbert space $H$:
\begin{equation}\label{fsweq0}
\begin{cases}
\mathrm{i}^{\nu}\partial_t^\alpha u +Au=0, &\qquad t \in (0,T),\\
\partial_t^k u(0)=u_k, \quad k\in \{0, \lfloor\alpha\rfloor\},
\end{cases}
\end{equation}
where $\nu\in \{1,\alpha\}$, and the Caputo derivative $\partial_t^\alpha u$ of fractional order $\alpha \in (0,1)\cup (1,2)$ is defined by
$$\partial_t^{\alpha} u(t) = \frac{1}{\Gamma(\lfloor\alpha\rfloor+1-\alpha)} \int^t_0 (t-s)^{\lfloor\alpha\rfloor-\alpha} \frac{\d^{\lfloor\alpha\rfloor+1} }{\d s^{\lfloor\alpha\rfloor+1}}u(s) \, \d s,$$
where $\Gamma$ is the standard Gamma function and $\lfloor\alpha\rfloor$ is the floor of $\alpha$.

Using analogy with time-fractional diffusion equations, Naber first introduced the time-fractional Schrödinger equation \cite{Na04} using a core rotation that replaced $\mathrm{i}$ with $\mathrm{i}^\alpha$, leading to the problem \eqref{fsweq0} with $\nu = \alpha$. This modification yields an intuitive physical interpretation for both cases $0<\alpha<1$ and $1<\alpha<2$. In contrast, for $0 < \alpha < 1$, Achar et al. \cite{Ach13} derived the case of \eqref{fsweq0} with $\nu = 1$ using the Feynman path integral approach. This model allows for recovering the classical Schrödinger equation as a limiting case $\alpha\to 1$. On the other hand, the latter model becomes non-physical for $1 < \alpha < 2$, introducing some exponential growth. Based on these facts, we shall consider two cases:
\begin{itemize}
    \item \textbf{Case 1:} $\nu=1$ and $0<\alpha<1$.
    \item \textbf{Case 2:} $\nu=\alpha$, $0<\alpha<1$ or $1<\alpha<2$.
\end{itemize}

\noindent\textbf{Backward problem:} Given $v_k$, $k\in \{0, \lfloor\alpha\rfloor\}$, determine $u_k$ such that the solution to \eqref{fsweq0} satisfies
$$\partial_t^k u(T)=v_k, \qquad k\in \{0, \lfloor\alpha\rfloor\}.$$
We will study the well-posedness of forward and backward problems in two cases:
\begin{itemize}
    \item the subdiffusive case: $0<\alpha<1$,
    \item the superdiffusive case: $1<\alpha<2$.
\end{itemize}

Although there is a rich literature on time-fractional heat and wave equations, see, e.g., \cite{Ce25,Ch25,Ch24,FY20,Hao19}, backward problems have not been studied before for the fractional Schrödinger equations. As for inverse and ill-posed problems for Schrödinger equations with an integer derivative, the literature is abundant, and one can refer, for instance, to the papers \cite{AM12, BP02, CHM25, IY24} and the cited bibliography.

\subsection*{Literature on fractional Schrödinger equations}
There are only a few works regarding the time-fractional Schrödinger equation. We mention the paper \cite{CC23}, where the authors present a unified mathematical framework for modeling and analyzing superdiffusive quantum wave behavior using time-fractional Schrödinger equations that interpolate between the classical Schrödinger equation and the wave equation, capturing non-Markovian dynamics and memory effects. The paper \cite{CC24} addresses the control of quantum systems governed by fractional-in-time Schrödinger equations with the Caputo derivative of order $\alpha\in (1,2)$. On the other hand, the paper \cite{ramswroop14} developed a semi-analytical method combining Laplace and homotopy transforms to solve time-fractional Schrödinger equations with Caputo derivatives, capturing quantum memory effects through rapidly converging series solutions. The article \cite{Gr19} proved the existence of solutions to a nonlinear space-time-fractional equation for the subdiffusive case using Fourier multiplier estimates. Moreover, the paper \cite{Wang07} solves a generalized Schrödinger equation incorporating space-time fractional derivatives, using integral transforms to derive solutions for free particles and potential wells. The author of \cite{Lee20} established Strichartz estimates in the fractional setting. In addition, the paper \cite{Gab23} explores analytical and numerical solutions of a fractional Schrödinger equation with time-dependent potentials. The research \cite{Ba24} investigates a coupled nonlinear system of fractional Schrödinger equations in $\mathbb{R}^n$.

On the other hand, the paper \cite{Su19} proved $L^p-L^q$ estimates for a semilinear space-time fractional equation, while \cite{Su21} addressed some dispersive estimates. As recent related works, we cite \cite{Zh25} and \cite{Zh252} on the local/global well-posedness of space-time fractional Schrödinger equations in the Euclidean space, providing new dispersive and Sobolev estimates. Finally, the paper \cite{murad25} addressed a time-fractional nonlinear Schrödinger equation with applications to soliton dynamics in nonlinear optical media.

Regarding inverse problems for time-fractional Schrödinger equations, the only works we are aware of are \cite{AS23} and \cite{AS22} for $\nu=1$, where the authors studied the determination of a time-dependent source term from an additional condition. They proved the existence and uniqueness of the solution to the inverse problem, along with a stability estimate. Surprisingly, to the best of our knowledge, there are no serious papers on backward problems for the time-fractional Schrödinger equation. In this paper, we initiate the study of such problems by investigating the well-posedness and forward and backward problems for the fractional-in-time Schrödinger equations. We derive conditions under which the solution satisfies the backward well-posedness. Our approach utilizes a series expansion in terms of eigenvectors and properties of the Mittag-Leffler function, among other techniques.

The article is organized as follows. In Section \ref{sec2}, we recall some needed properties of Mittag-Leffler functions, proving new results on the distribution of zeros on the imaginary axis. Section \ref{sec3} presents the proofs of the well-posedness of forward and backward problems in the first case $\nu=1$ for the subdiffusive regime. Then, Section \ref{sec4} is devoted to establishing some well-posedness results for forward and backward problems in the case $\nu=\alpha$ for the subdiffusive and superdiffusive regimes. Finally, in Section \ref{sec5}, we provide some concluding comments and directions for future research.

\section{Preliminary results}\label{sec2}
The study of fractional derivatives is closely related to the following Mittag-Leffler function (introduced first in \cite{ML03}):
\begin{align*}
    E_{\alpha,\beta}(z)=\sum_{n=0}^{\infty}\frac{z^n}{\Gamma(\alpha n+\beta)} \qquad \forall z\in\mathbb{C},
\end{align*}
where $\alpha>0$, $\beta\in \mathbb{R}$. For $\alpha=\beta=1$, we recover the classical exponential function $\mathrm{e}^{z}$. We refer to the books \cite{podlubny99} and \cite{Gor20} for more details.

Recall that a function $f:[0, \infty) \rightarrow \mathbb{R}$ is said to be completely monotone if it has derivatives of all orders satisfying
$$
(-1)^n f^{(n)}(t) \geq 0 \quad \text { for all } t>0,\quad n=0,1,2, \ldots
$$
If $0 <\alpha \le 1$, it is known that the function $E_{\alpha,1}(-t)$ is completely monotone for $t\ge 0$; see \cite{Po48}. Moreover, we have the following lemma. 
\begin{lem}[see \cite{Sc96}]\label{lmcm}
If $0 <\alpha \le 1$ and $\beta \ge \alpha$, the function $E_{\alpha, \beta}(-t)$ is completely monotone for $t\ge 0$.
\end{lem}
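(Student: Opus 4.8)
The plan is to invoke the Bernstein--Widder characterization: $E_{\alpha,\beta}(-t)$ is completely monotone on $[0,\infty)$ if and only if it is the Laplace transform of a nonnegative density on $(0,\infty)$. Expanding $\mathrm{e}^{-rt}$ and matching coefficients against the defining series shows that any such density $\phi_{\alpha,\beta}$ must satisfy the moment conditions $\int_0^\infty r^n \phi_{\alpha,\beta}(r)\,\d r = n!/\Gamma(\alpha n+\beta)$, equivalently it must have Mellin transform $\int_0^\infty r^{s-1}\phi_{\alpha,\beta}(r)\,\d r = \Gamma(s)/\Gamma(\alpha s+\beta-\alpha)$. Inverting this Mellin--Barnes integral (closing the contour to the left and collecting the residues of $\Gamma(s)$ at $s=-k$) identifies the candidate density explicitly as the everywhere-convergent Wright series $\phi_{\alpha,\beta}(r)=\sum_{k=0}^\infty \frac{(-r)^k}{k!\,\Gamma(\beta-\alpha(k+1))}$. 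The entire statement thus reduces to proving that $\phi_{\alpha,\beta}(r)\ge 0$ for all $r>0$ under the hypotheses $0<\alpha\le1$ and $\beta\ge\alpha$.

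I would first dispose of the base index $\beta=1$, where $\phi_{\alpha,1}$ is precisely the Mainardi M-Wright function $M_\alpha$; its nonnegativity is equivalent, through Bernstein--Widder, to the complete monotonicity of $E_{\alpha,1}(-t)$ recalled before the lemma (Pollard \cite{Po48}), so it may be taken as known. To propagate positivity to all $\beta\ge 1$, I would use a mixing argument based on the Beta integral: for $\beta>1$,
\[
\frac{1}{\Gamma(\alpha n+\beta)}=\frac{1}{\Gamma(\beta-1)\,\Gamma(\alpha n+1)}\int_0^1 u^{\alpha n}(1-u)^{\beta-2}\,\d u,
\]
which exhibits $\phi_{\alpha,\beta}$ as the nonnegative superposition $\phi_{\alpha,\beta}(r)=\frac{1}{\Gamma(\beta-1)}\int_0^1 u^{-\alpha}(1-u)^{\beta-2}\,M_\alpha(r\,u^{-\alpha})\,\d u\ge0$. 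This settles complete monotonicity for every $\beta\ge 1$.

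The delicate part, which I expect to be the main obstacle, is the remaining window $\alpha\le\beta<1$, and in particular the sharp endpoint $\beta=\alpha$. Here the Beta mixture is unavailable, since lowering $\beta$ below $1$ corresponds to a Gamma quotient that is no longer the Mellin transform of a positive measure; and the three-term recurrence $E_{\alpha,\beta}(-t)=\frac{1}{\Gamma(\beta)}-t\,E_{\alpha,\beta+\alpha}(-t)$ only writes the function as a constant minus $t$ times a completely monotone function, a form whose complete monotonicity is not automatic. I would therefore attack the positivity of the Wright density $\phi_{\alpha,\beta}=W_{-\alpha,\,\beta-\alpha}(-\,\cdot\,)$ directly on this window, working from its Hankel/Mellin--Barnes integral representation: deforming the contour onto the branch cut along $(-\infty,0]$ turns $\phi_{\alpha,\beta}(r)$ into a real integral of an oscillatory integrand, and one must show that the constraint $\beta\ge\alpha$ (together with $\alpha\le1$, which keeps the relevant denominator factor $r^{2\alpha}+2r^\alpha\cos(\pi\alpha)+1$ strictly positive) is exactly what forces the integrand to remain nonnegative. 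Carrying out this sign analysis is the crux, and it matches the argument of Schneider \cite{Sc96}.
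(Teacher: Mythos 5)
The paper itself offers no proof of this lemma; it is quoted directly from Schneider \cite{Sc96}. Your proposal has a genuine gap precisely at the point you yourself identify as the crux. The Pollard base case together with the Beta-integral mixing correctly gives complete monotonicity for all $\beta\ge 1$ (your mixing identity is right), but on the remaining window $\alpha\le\beta<1$ you prove nothing: you set up the branch-cut representation of the Wright density and then assert that the required sign analysis \emph{``matches the argument of Schneider''}. Deferring the decisive step to the cited paper makes the proposal, in substance, a citation rather than a proof --- which is exactly what the paper already does. (A smaller issue: Bernstein--Widder produces a nonnegative \emph{measure}; identifying it with the Wright series by moment matching is only a necessary-condition argument, and the actual Laplace-pair identity $\int_0^\infty \mathrm{e}^{-rt}W_{-\alpha,\beta-\alpha}(-r)\,\d r=E_{\alpha,\beta}(-t)$ needs the superexponential decay of the Wright function, not just formal Mellin inversion.)

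The frustrating part is that the gap is avoidable by your own method: the Beta mixture propagates complete monotonicity upward in $\beta$ from \emph{any} base index $\mu$, not only from $\mu=1$. Take $\mu=\alpha$. For $\beta>\alpha$, the same Beta-function computation gives
\begin{equation*}
E_{\alpha,\beta}(-t)=\frac{1}{\Gamma(\beta-\alpha)}\int_0^1 s^{\alpha-1}(1-s)^{\beta-\alpha-1}E_{\alpha,\alpha}\left(-t s^{\alpha}\right)\d s ,
\end{equation*}
since $\int_0^1 s^{\alpha n+\alpha-1}(1-s)^{\beta-\alpha-1}\,\d s=\Gamma(\alpha n+\alpha)\Gamma(\beta-\alpha)/\Gamma(\alpha n+\beta)$. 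The base case $\beta=\alpha$ is immediate from Pollard \cite{Po48}: termwise differentiation of the series yields $E_{\alpha,\alpha}(-t)=-\alpha\,\frac{\d}{\d t}E_{\alpha,1}(-t)$, and the negated derivative of a completely monotone function is completely monotone. Hence $E_{\alpha,\alpha}(-t)$ is completely monotone, and the mixture then covers the entire range $\beta\ge\alpha$ (the case $\alpha=1$ being trivial since $E_{1,1}(-t)=\mathrm{e}^{-t}$). This closes your window $\alpha\le\beta<1$ with no contour deformation and no sign analysis, and renders the appeal to Schneider unnecessary.
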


We shall use the asymptotic expansions as $|z|\to \infty$ and upper bounds of the Mittag-Leffler function.
\begin{lem}[Pages 32-35 in \cite{podlubny99}]
Let $0<\alpha<2$ and $\beta \in \mathbb{R}$ be arbitrary. Let $\mu$ satisfy $\frac{\pi \alpha}{2}<\mu<\min \{\pi, \pi \alpha\}$. Then, for any $p\ge 1$,
\begin{equation}\label{asymptotic expansions}
    E_{\alpha, \beta}(z)=-\sum_{k=1}^p\frac{1}{\Gamma(\beta-\alpha k)}\frac{1}{z^k} + \mathcal{O}\left(\frac{1}{|z|^{1+p}} \right), \; |z|\to \infty, \; \mu \leq |\arg (z)| \leq \pi;
\end{equation}
\begin{equation}\label{asymptotic expansions2}
    E_{\alpha, \beta}(z)=\frac{1}{\alpha} z^{\frac{1-\beta}{\alpha}} e^{z^{\frac{1}{\alpha}}}-\sum_{k=1}^p\frac{1}{\Gamma(\beta-\alpha k)}\frac{1}{z^k} + \mathcal{O}\left(\frac{1}{|z|^{1+p}} \right), \; |z|\to \infty, \; |\arg (z)|\leq \mu.
\end{equation}
Moreover, there exist positive constants $C,C_1,C_2$ depending on $(\alpha, \beta, \mu)$ such that
\begin{equation}\label{boundedness M-L}
\left|E_{\alpha, \beta}(z)\right| \leq \frac{C}{1+|z|}, \qquad \mu \leq |\arg (z)| \leq \pi.
\end{equation}
\begin{equation}\label{boundedness M-L2}
\left|E_{\alpha, \beta}(z)\right| \leq C_1 (1+|z|)^{\frac{1-\beta}{\alpha}} e^{\mathrm{Re}(z^{\frac{1}{\alpha}})}+ \frac{C_2}{1+|z|}, \qquad |\arg (z)| \leq \mu.
\end{equation}
\end{lem}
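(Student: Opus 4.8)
The plan is to derive both expansions from a single Hankel-type contour integral representation of $E_{\alpha,\beta}$ and then read off the two regimes according to whether the pole of the integrand is captured by the contour. First I would start from the Hankel representation of the reciprocal Gamma function, $\frac{1}{\Gamma(s)} = \frac{1}{2\pi\mathrm{i}}\int_{\mathcal{H}} \mathrm{e}^t\, t^{-s}\,\d t$, insert it termwise into the defining series $E_{\alpha,\beta}(z) = \sum_{n} z^n/\Gamma(\alpha n + \beta)$, and sum the resulting geometric series to obtain, for a suitable contour,
\[
E_{\alpha,\beta}(z) = \frac{1}{2\pi\mathrm{i}}\int_{\mathcal{H}}\frac{\mathrm{e}^{t}\, t^{\alpha-\beta}}{t^{\alpha} - z}\,\d t .
\]
After the substitution $\lambda = t^{\alpha}$ this becomes $\frac{1}{2\pi\mathrm{i}\alpha}\int_{\gamma} \frac{\mathrm{e}^{\lambda^{1/\alpha}}\lambda^{(1-\beta)/\alpha}}{\lambda - z}\,\d\lambda$, where $\gamma$ is the contour made of the two rays $\arg\lambda = \pm\mu$ together with a connecting arc near the origin. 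The hypothesis $\frac{\pi\alpha}{2}<\mu<\min\{\pi,\pi\alpha\}$ is exactly what makes this choice work: $\mu>\frac{\pi\alpha}{2}$ forces $\mathrm{Re}(\lambda^{1/\alpha})\to-\infty$ along the rays, guaranteeing convergence of the integral, while $\mu<\min\{\pi,\pi\alpha\}$ keeps $\gamma$ a proper sector boundary on which $\lambda^{(1-\beta)/\alpha}$ is single-valued.

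Next I would split into the two cases by contour deformation. The integrand has a simple pole at $\lambda = z$ with residue $\frac{1}{\alpha}\mathrm{e}^{z^{1/\alpha}}z^{(1-\beta)/\alpha}$. The pole lies strictly inside the sector bounded by $\gamma$ precisely when $|\arg z|<\mu$. Thus for $\mu\le|\arg z|\le\pi$ the pole is not enclosed and contributes nothing, which accounts for \eqref{asymptotic expansions}; for $|\arg z|\le\mu$ the residue theorem produces exactly the exponential term $\frac{1}{\alpha}z^{(1-\beta)/\alpha}\mathrm{e}^{z^{1/\alpha}}$ of \eqref{asymptotic expansions2}. This is the only structural difference between the two formulas.

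For the algebraic part, common to both cases, I would expand the Cauchy kernel as a finite geometric series,
\[
\frac{1}{\lambda - z} = -\sum_{k=0}^{p-1}\frac{\lambda^{k}}{z^{k+1}} - \frac{\lambda^{p}}{z^{p}(\lambda - z)},
\]
and integrate term by term. Each polynomial term reduces, after undoing the substitution $\lambda = t^{\alpha}$, to a reciprocal Gamma integral, namely $\frac{1}{2\pi\mathrm{i}\alpha}\int_{\gamma} \mathrm{e}^{\lambda^{1/\alpha}}\lambda^{(1-\beta)/\alpha + k}\,\d\lambda = 1/\Gamma\bigl(\beta - \alpha(k+1)\bigr)$; relabeling $j = k+1$ yields the finite sum $-\sum_{j=1}^{p}\Gamma(\beta-\alpha j)^{-1}z^{-j}$ that appears identically in \eqref{asymptotic expansions} and \eqref{asymptotic expansions2}.

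The main obstacle, and the step demanding the most care, is the uniform estimate of the remainder $R_p(z) = -\frac{1}{2\pi\mathrm{i}\alpha z^{p}}\int_{\gamma} \frac{\mathrm{e}^{\lambda^{1/\alpha}}\lambda^{(1-\beta)/\alpha + p}}{\lambda - z}\,\d\lambda$. Here I would deform $\gamma$ so that it stays a fixed positive distance from $z$, giving $|\lambda - z|\ge c\,(1+|z|)$ on the contour, and then use the decay $\mathrm{Re}(\lambda^{1/\alpha})\to-\infty$ to bound $\int_{\gamma}\bigl|\mathrm{e}^{\lambda^{1/\alpha}}\bigr|\,|\lambda|^{(1-\beta)/\alpha + p}\,|\d\lambda|$ by a constant independent of $z$; this yields $R_p(z)=\mathcal{O}(|z|^{-(p+1)})$, the delicate point being uniformity as $|\arg z|$ approaches the critical value $\mu$, where the pole is about to cross the contour. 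Finally, the bounds \eqref{boundedness M-L} and \eqref{boundedness M-L2} follow by taking $p=1$: on $\mu\le|\arg z|\le\pi$ the expansion gives $|E_{\alpha,\beta}(z)|\le C/|z|$ for large $|z|$, which combines with the boundedness of the entire function $E_{\alpha,\beta}$ on compact sets to give $C/(1+|z|)$, while on $|\arg z|\le\mu$ the exponential term dominates and produces the stated bound carrying the factor $\mathrm{e}^{\mathrm{Re}(z^{1/\alpha})}$.
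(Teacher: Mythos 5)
This lemma is not proved in the paper at all --- it is quoted verbatim from Podlubny's book (pp.~32--35), and your Hankel-contour argument (integral representation $\frac{1}{2\pi\mathrm{i}\alpha}\int_\gamma \frac{e^{\lambda^{1/\alpha}}\lambda^{(1-\beta)/\alpha}}{\lambda-z}\,\mathrm{d}\lambda$, residue capture according to whether $|\arg z|<\mu$, finite geometric expansion of the Cauchy kernel giving the $\Gamma(\beta-\alpha k)^{-1}z^{-k}$ terms, and a remainder bound obtained by taking the integration rays at an angle strictly between $\frac{\pi\alpha}{2}$ and $\mu$, respectively between $\mu$ and $\min\{\pi,\pi\alpha\}$, so the contour stays at distance of order $|z|$ from the pole) is precisely the proof given in that reference, and it is correct. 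The only slip is the sign of the remainder in your geometric expansion, which should read $\frac{1}{\lambda-z}=-\sum_{k=0}^{p-1}\frac{\lambda^k}{z^{k+1}}+\frac{\lambda^p}{z^p(\lambda-z)}$; this is harmless since only the modulus of the remainder enters the $\mathcal{O}(|z|^{-1-p})$ estimate.
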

As a result, the estimate \eqref{boundedness M-L} for $\arg (z)=-\frac{\pi}{2}$ and the estimate \eqref{boundedness M-L2} for $\arg (z)=-\frac{\pi \alpha}{2}$ yield:
\begin{cor}
\label{est12}
Let $t>0$ and $\lambda\ge 0$.
\begin{enumerate}
    \item If $0<\alpha<1$. There exists a constant $C_0>0$, depending only on $\alpha$ and $\mu \in\left(\frac{\pi \alpha}{2}, \pi \alpha \right)$, such that
\begin{align}\label{es0}
& \left|E_{\alpha, 1}\left(-\mathrm{i} \lambda t^\alpha\right)\right| \leq \frac{C_0}{1+\lambda t^\alpha} \leq C_0.
\end{align}
\item If $0<\alpha<2$. There exist constants $C_1, C_2, C_3>0$, depending only on $\alpha$ and $\mu \in\left(\frac{\pi \alpha}{2},\min(\pi,\pi \alpha) \right)$, such that
\begin{align}
& \left|E_{\alpha, 1}\left(\mathrm{i}^{-\alpha} \lambda t^\alpha\right)\right| \leq C_1+\frac{C_2}{1+\lambda t^\alpha} \leq C_3; \label{es1}\\
& \left|E_{\alpha, 2}\left(\mathrm{i}^{-\alpha} \lambda t^\alpha\right)\right| \leq C_1\left(1+\lambda t^\alpha\right)^{\frac{-1}{\alpha}}+\frac{C_2}{1+\lambda t^\alpha} \leq \frac{C_3}{\left(1+\lambda t^\alpha\right)^{\min(1,\frac{1}{\alpha})}}; \label{es2}\\
& \left|E_{\alpha, \alpha}\left(\mathrm{i}^{-\alpha} \lambda t^\alpha\right)\right| \leq C_1\left(1+\lambda t^\alpha\right)^{\frac{1-\alpha}{\alpha}}+\frac{C_2}{1+\lambda t^\alpha} \leq C_3\left(1+\lambda t^\alpha\right)^{\frac{1-\alpha}{\alpha}}; \label{es3}\\
& \left|E_{\alpha, \alpha-1}\left(\mathrm{i}^{-\alpha} \lambda t^\alpha\right)\right| \leq C_1\left(1+\lambda t^\alpha\right)^{\frac{2-\alpha}{\alpha}}+\frac{C_2}{1+\lambda t^\alpha} \leq C_3\left(1+\lambda t^\alpha\right)^{\frac{2-\alpha}{\alpha}}. \label{es4}
\end{align}
\end{enumerate}
\end{cor}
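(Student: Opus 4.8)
The plan is to specialize the general bounds \eqref{boundedness M-L} and \eqref{boundedness M-L2} to the two relevant families of arguments, $z=-\mathrm{i}\lambda t^\alpha$ and $z=\mathrm{i}^{-\alpha}\lambda t^\alpha$, and then reduce the resulting two-term estimates to the stated single-power bounds. The first task is a computation of moduli and arguments: for $\lambda\ge 0$, $t>0$ one has $|{-\mathrm{i}\lambda t^\alpha}|=\lambda t^\alpha$ with $\arg(-\mathrm{i}\lambda t^\alpha)=-\tfrac{\pi}{2}$, while writing $\mathrm{i}^{-\alpha}=e^{-\mathrm{i}\pi\alpha/2}$ gives $|\mathrm{i}^{-\alpha}\lambda t^\alpha|=\lambda t^\alpha$ with $\arg(\mathrm{i}^{-\alpha}\lambda t^\alpha)=-\tfrac{\pi\alpha}{2}$.

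For part (1), I would invoke \eqref{boundedness M-L}, which requires a parameter $\mu$ with $\tfrac{\pi\alpha}{2}<\mu\le|\arg(z)|=\tfrac{\pi}{2}$ and $\mu<\min\{\pi,\pi\alpha\}=\pi\alpha$. The hypothesis $0<\alpha<1$ is precisely what makes the interval $(\tfrac{\pi\alpha}{2},\tfrac{\pi}{2}]\cap(\tfrac{\pi\alpha}{2},\pi\alpha)$ nonempty, and this is the only place the restriction $\alpha<1$ enters; for such $\mu$, \eqref{boundedness M-L} immediately yields \eqref{es0} with $C_0=C$, and the trivial bound $\tfrac{C_0}{1+\lambda t^\alpha}\le C_0$ follows since $\lambda t^\alpha\ge 0$.

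For part (2), the key observation, and the crux of the argument, is that for $z=\mathrm{i}^{-\alpha}\lambda t^\alpha$ the exponential factor in \eqref{boundedness M-L2} is trivial. Indeed, with $\arg(z)=-\tfrac{\pi\alpha}{2}\in(-\pi,\pi]$, the principal branch gives $z^{1/\alpha}=(\lambda t^\alpha)^{1/\alpha}e^{-\mathrm{i}\pi/2}$, which is purely imaginary, so $\mathrm{Re}(z^{1/\alpha})=0$ and $e^{\mathrm{Re}(z^{1/\alpha})}=1$. Since $|\arg(z)|=\tfrac{\pi\alpha}{2}<\mu$ for any admissible $\mu$, \eqref{boundedness M-L2} then reduces to $|E_{\alpha,\beta}(\mathrm{i}^{-\alpha}\lambda t^\alpha)|\le C_1(1+\lambda t^\alpha)^{(1-\beta)/\alpha}+\tfrac{C_2}{1+\lambda t^\alpha}$. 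Substituting $\beta\in\{1,2,\alpha,\alpha-1\}$ produces the exponents $0,\,-\tfrac{1}{\alpha},\,\tfrac{1-\alpha}{\alpha},\,\tfrac{2-\alpha}{\alpha}$ appearing in \eqref{es1}--\eqref{es4}.

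Finally, to pass to the right-hand single-power bounds I would use that $1+\lambda t^\alpha\ge 1$, so that for any real exponents $a$ and $-1$ one has $C_1(1+\lambda t^\alpha)^a+C_2(1+\lambda t^\alpha)^{-1}\le C_3(1+\lambda t^\alpha)^{\max(a,-1)}$. Checking each case, $\max(0,-1)=0$ gives \eqref{es1}; $\max(-\tfrac{1}{\alpha},-1)=-\min(1,\tfrac{1}{\alpha})$ gives \eqref{es2}; and since $\tfrac{1-\alpha}{\alpha}>-1$ and $\tfrac{2-\alpha}{\alpha}>0>-1$ for all $\alpha\in(0,2)$, the first term dominates in \eqref{es3} and \eqref{es4}. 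No single step is genuinely hard; the one point requiring care is the branch computation showing $\mathrm{Re}(z^{1/\alpha})=0$, since this is exactly the feature distinguishing the oscillatory Schrödinger rotation $\mathrm{i}^{-\alpha}$ from the dissipative diffusion case, and it is what prevents the bounds from blowing up exponentially.
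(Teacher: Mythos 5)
Your proof is correct and follows essentially the same route as the paper, which obtains the corollary precisely by specializing \eqref{boundedness M-L} at $\arg(z)=-\tfrac{\pi}{2}$ and \eqref{boundedness M-L2} at $\arg(z)=-\tfrac{\pi\alpha}{2}$; your write-up simply makes explicit the details the paper leaves implicit (the admissible choice of $\mu$, the branch computation giving $\mathrm{Re}(z^{1/\alpha})=0$, and the absorption of the two-term bounds into a single power of $1+\lambda t^\alpha$).
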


Using termwise differentiation, we obtain the following lemma, which will be needed in the sequel.
\begin{lem}
    Let $0<\alpha<2$. For all $t>0$ and all $\omega\in\mathbb{C}$, we have
    \begin{align}
        &\partial_t^\alpha E_{\alpha,1}(\omega t^{\alpha})=\omega E_{\alpha,1}(\omega t^{\alpha}), \label{derivative0_ML}\\
        &\partial_t E_{\alpha,1}(\omega t^{\alpha})=\omega t^{\alpha-1}E_{\alpha,\alpha}(\omega t^{\alpha}), \label{derivative1_ML}\\
        &\partial_t \left(tE_{\alpha,2}(\omega t^{\alpha})\right)=E_{\alpha,1}(\omega t^{\alpha}), \label{derivative2_ML}\\
        &\partial_t \left(t^{\alpha-1}E_{\alpha,\alpha}(\omega t^{\alpha})\right)=t^{\alpha-2}E_{\alpha,\alpha-1}(\omega t^{\alpha}). \label{derivative3_ML}
    \end{align}
\end{lem}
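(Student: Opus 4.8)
The plan is to prove all four identities by term-by-term differentiation of the defining power series $E_{\alpha,\beta}(\omega t^\alpha)=\sum_{n=0}^\infty \omega^n t^{\alpha n}/\Gamma(\alpha n+\beta)$, which is legitimate because each Mittag-Leffler function is entire in its argument: by Stirling's asymptotics the coefficients $1/\Gamma(\alpha n+\beta)$ decay faster than any geometric rate, so the series together with all of its formally differentiated counterparts converge absolutely and uniformly on compact subsets of $(0,\infty)$, locally uniformly in $\omega$. This uniform convergence is what justifies interchanging $\partial_t$ (and, for the fractional identity, the Caputo operator) with the infinite sum.

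For the three classical-derivative identities \eqref{derivative1_ML}--\eqref{derivative3_ML}, I would differentiate each monomial $t^{\alpha n+c}$ (with $c=0,1,\alpha-1$ respectively), collapse the resulting factor into the Gamma function via the functional equation $\Gamma(x+1)=x\,\Gamma(x)$, and re-index. For \eqref{derivative1_ML} one gets $\partial_t t^{\alpha n}=\alpha n\,t^{\alpha n-1}$ and $\alpha n/\Gamma(\alpha n+1)=1/\Gamma(\alpha n)$, after which the shift $m=n-1$ extracts the prefactor $\omega t^{\alpha-1}$ and leaves the series $E_{\alpha,\alpha}(\omega t^\alpha)$. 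Identities \eqref{derivative2_ML} and \eqref{derivative3_ML} follow the same two-step pattern using $(\alpha n+1)/\Gamma(\alpha n+2)=1/\Gamma(\alpha n+1)$ and $(\alpha n+\alpha-1)/\Gamma(\alpha n+\alpha)=1/\Gamma(\alpha n+\alpha-1)$. In \eqref{derivative3_ML} the terms for which $\alpha n+\alpha-1$ is a nonpositive integer are handled by the convention $1/\Gamma=0$ at the poles of $\Gamma$, which is precisely the convention under which the Mittag-Leffler series itself is defined.

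The main identity, and the one demanding the most care, is the fractional relation \eqref{derivative0_ML}. The key ingredient is the Caputo power rule
\[
\partial_t^\alpha t^p=\frac{\Gamma(p+1)}{\Gamma(p+1-\alpha)}\,t^{p-\alpha},
\]
which I would establish from the definition of $\partial_t^\alpha$ uniformly in both regimes $0<\alpha<1$ ($\lfloor\alpha\rfloor=0$) and $1<\alpha<2$ ($\lfloor\alpha\rfloor=1$): differentiating $t^p$ the requisite $\lfloor\alpha\rfloor+1$ times and substituting $s=t\tau$ turns the convolution against the kernel $(t-s)^{\lfloor\alpha\rfloor-\alpha}$ into a Beta integral $B(p-\lfloor\alpha\rfloor,\lfloor\alpha\rfloor+1-\alpha)$, and the relation $\Gamma(x+1)=x\Gamma(x)$ collapses the constants to the stated form in both cases. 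Applying this with $p=\alpha n$ — noting that $p=\alpha n>\lfloor\alpha\rfloor$ for every $n\ge 1$, so the rule applies, while the $n=0$ constant term has vanishing Caputo derivative — gives $\partial_t^\alpha t^{\alpha n}=\bigl(\Gamma(\alpha n+1)/\Gamma(\alpha(n-1)+1)\bigr)t^{\alpha(n-1)}$; the factor $\Gamma(\alpha n+1)$ cancels the denominator of the $n$-th coefficient of $E_{\alpha,1}$, and the final shift $m=n-1$ produces $\omega E_{\alpha,1}(\omega t^\alpha)$.

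The principal obstacle is the rigorous interchange of the nonlocal Caputo operator with the infinite sum in \eqref{derivative0_ML}: unlike ordinary differentiation, it involves a fractional integral against the singular kernel $(t-s)^{\lfloor\alpha\rfloor-\alpha}$. I would make this precise by bounding the series of Caputo-differentiated monomials $\sum_n |\omega|^n\,\Gamma(\alpha n+1)^{-1}\,|\partial_t^\alpha t^{\alpha n}|$ and verifying, through the power rule together with Stirling's estimate for the ratio $\Gamma(\alpha n+1)/\Gamma(\alpha(n-1)+1)$, that it converges uniformly on compact $t$-intervals; this permits applying $\partial_t^\alpha$ termwise and legitimizes the formal computation above.
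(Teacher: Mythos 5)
Your proposal is correct and follows exactly the paper's route: the paper justifies this lemma solely by the phrase ``using termwise differentiation,'' and your argument is precisely that computation carried out in detail, with the Gamma functional equation $\Gamma(x+1)=x\Gamma(x)$ for the classical derivatives and the Caputo power rule $\partial_t^\alpha t^p=\frac{\Gamma(p+1)}{\Gamma(p+1-\alpha)}t^{p-\alpha}$ for \eqref{derivative0_ML}. Your additional care about the interchange of the Caputo operator with the series and the $1/\Gamma=0$ convention at poles goes beyond what the paper records, but it is consistent with, not different from, its approach.
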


Sometimes, to overcome the difficulties coming from the imaginary unit, we will use the following key formula (see \cite{BS05}):
\begin{equation}\label{kf}
E_{\alpha,1}(-\mathrm{i} t)=E_{2 \alpha,1}\left(-t^2\right)-\mathrm{i} t E_{2 \alpha, 1+\alpha}\left(-t^2\right), \quad t\in \mathbb{R}.
\end{equation}

The following proposition studies the zeros of the Mittag-Leffler function along the imaginary axis.
\begin{prop}\label{zeros}
The following assertions hold:
\begin{itemize}
    \item[(i)] If $\alpha\in\left(0,\frac{3}{5}\right]$, then $E_{\alpha,1}(\mathrm{i} t)\neq 0$ for all $t\in\mathbb{R}$;
    \item[(ii)] If $\alpha\in \left(\frac{3}{5}, 1\right)$, there exists at most a finite number of $t\in \mathbb{R}$ such that $E_{\alpha,1}(\mathrm{i} t) \ne 0$.
\end{itemize}
\end{prop}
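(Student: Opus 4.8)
The starting point is to split $E_{\alpha,1}(\mathrm{i}t)$ into real and imaginary parts. Replacing $t$ by $-t$ in the key formula \eqref{kf} gives, for $t\in\mathbb{R}$,
$$E_{\alpha,1}(\mathrm{i}t)=E_{2\alpha,1}(-t^2)+\mathrm{i}\,t\,E_{2\alpha,1+\alpha}(-t^2),$$
so a zero $E_{\alpha,1}(\mathrm{i}t)=0$ forces simultaneously $E_{2\alpha,1}(-t^2)=0$ and $t\,E_{2\alpha,1+\alpha}(-t^2)=0$. Since $E_{\alpha,1}(0)=1$, the value $t=0$ is never a zero, so any hypothetical zero has $t\neq0$ and satisfies both $P(s):=E_{2\alpha,1}(-s)=0$ and $Q(s):=E_{2\alpha,1+\alpha}(-s)=0$ at $s=t^2>0$. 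Thus everything reduces to the two real-variable Mittag-Leffler functions $P$ and $Q$ on $[0,\infty)$.

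For part (i) the plan is to prove the single clean statement that $Q(s)>0$ for all $s\ge0$ whenever $0<\alpha\le\frac{3}{5}$; granting this, the imaginary part $t\,Q(t^2)$ vanishes only at the excluded point $t=0$, and (i) follows immediately. When $0<\alpha\le\frac{1}{2}$ this is easy: then $2\alpha\le1$ and $1+\alpha\ge2\alpha$, so Lemma \ref{lmcm} shows $Q$ is completely monotone, and $Q(0)=1/\Gamma(1+\alpha)>0$ forces $Q(s)>0$ (the real part alone already settles this subrange, $P$ being likewise completely monotone). The delicate regime is $\frac{1}{2}<\alpha\le\frac{3}{5}$, where the order $2\alpha\in(1,\frac{6}{5}]$ exceeds $1$: here \eqref{asymptotic expansions} gives $P(s)\sim s^{-1}/\Gamma(1-2\alpha)<0$ as $s\to\infty$ while $P(0)=1$, so the real part genuinely vanishes and complete monotonicity is no longer available.

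Establishing $Q(s)>0$ on $\frac{1}{2}<\alpha\le\frac{3}{5}$ is the main obstacle. A convenient first reduction is the recurrence $E_{2\alpha,1-\alpha}(z)=\frac{1}{\Gamma(1-\alpha)}+z\,E_{2\alpha,1+\alpha}(z)$, which at $z=-s$ gives $Q(s)=\frac{1}{s}\big(\frac{1}{\Gamma(1-\alpha)}-E_{2\alpha,1-\alpha}(-s)\big)$; hence $Q(s)>0$ is equivalent to $E_{2\alpha,1-\alpha}(-s)<\frac{1}{\Gamma(1-\alpha)}=E_{2\alpha,1-\alpha}(0)$, i.e. the function $s\mapsto E_{2\alpha,1-\alpha}(-s)$ never exceeds its value at the origin. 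Its derivative at $s=0^+$ equals $-1/\Gamma(1+\alpha)<0$, so it starts strictly decreasing; the entire difficulty is to rule out a later overshoot back above $1/\Gamma(1-\alpha)$. I would attack this through the Hankel-contour integral representation of $E_{2\alpha,1-\alpha}(-s)$, valid for orders in $(1,2)$: deforming the contour produces a decaying term from $(-s)^{1/(2\alpha)}$, whose real part $s^{1/(2\alpha)}\cos(\pi/(2\alpha))$ is negative because $\pi/(2\alpha)\in(\frac{\pi}{2},\pi)$, together with an oscillatory integral over $[0,\infty)$ whose kernel carries angular $\sin$-factors in $\alpha$. The bound $\alpha\le\frac{3}{5}$ is precisely what keeps these factors of one sign so that no overshoot occurs; I expect the sign bookkeeping of this oscillatory integral, and verifying that $\frac{3}{5}$ is the exact borderline, to be the most technical step. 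For small $s$ one can package the estimate cheaply via the alternating-series (Leibniz) bound $Q(s)\ge 1/\Gamma(1+\alpha)-s/\Gamma(1+3\alpha)$, matching it to the integral estimate for large $s$.

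For part (ii), fix $\alpha\in(\frac{3}{5},1)$. Since $\alpha<1$ one may choose $\mu$ with $\frac{\pi\alpha}{2}<\mu<\frac{\pi}{2}$, so that $\mathrm{i}t$, whose argument equals $\pm\frac{\pi}{2}$, lies in the sector $\mu\le|\arg|\le\pi$ where the purely algebraic expansion \eqref{asymptotic expansions} applies; it yields $E_{\alpha,1}(\mathrm{i}t)=\mathrm{i}/(\Gamma(1-\alpha)t)+\mathcal{O}(t^{-2})$ as $|t|\to\infty$, which is nonzero for all $|t|\ge R$ with $R$ large because $\Gamma(1-\alpha)$ is finite and nonzero. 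On the compact interval $[-R,R]$ the entire, non-constant function $t\mapsto E_{\alpha,1}(\mathrm{i}t)$ has only isolated zeros, hence finitely many, and combining the two regions gives the finiteness claim. I note that this asymptotic argument in fact excludes zeros for large $|t|$ for every $\alpha\in(0,1)$, so the whole force of part (i) is the absence of zeros in the bounded range, which is exactly why the threshold $\frac{3}{5}$ enters only there.
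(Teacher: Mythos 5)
Your part (ii) is essentially identical to the paper's proof: the algebraic expansion \eqref{asymptotic expansions} applies on the ray $|\arg z|=\frac{\pi}{2}$ and gives $E_{\alpha,1}(\mathrm{i}t)=\mathrm{i}/(\Gamma(1-\alpha)t)+\mathcal{O}(t^{-2})$, which excludes zeros for $|t|\ge R$, and analyticity leaves only finitely many zeros in $[-R,R]$. Likewise, for $0<\alpha\le\frac{1}{2}$ your complete-monotonicity argument (you apply it to $Q(s)=E_{2\alpha,1+\alpha}(-s)$; the paper applies the same reasoning to $E_{2\alpha,1}(-s)$, together with holomorphy to get strict positivity) is exactly the paper's case (a).

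The genuine gap is in the range $\frac{1}{2}<\alpha\le\frac{3}{5}$, which is the whole point of the threshold $\frac{3}{5}$ in the statement. There you correctly reduce matters to showing $Q(s)>0$ for $s>0$, but you do not prove it: you sketch a Hankel-contour strategy, explicitly defer the ``sign bookkeeping'' that \emph{is} the content of the claim, and offer only a small-$s$ Leibniz bound with no completed large-$s$ counterpart to match it to. The paper closes this case not by contour analysis but by citing known results on real zeros of Mittag-Leffler functions: by \cite[Theorem 6.1.1]{PS13}, with $\rho=\frac{1}{2\alpha}$ there is a decreasing function $f(\rho)$ such that $E_{2\alpha,\lambda}$ has no real zeros whenever $\lambda>f(\rho)$, and by \cite[Theorem 6.1.3]{PS13} one has $f(\rho)<\frac{4}{3\rho}$ for $\frac{2}{3}<\rho<1$ (note $\rho\in[\frac{5}{6},1)$ here). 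Choosing $\lambda=1+\alpha$, the inequality $1+\alpha\ge\frac{4}{3\rho}=\frac{8\alpha}{3}$ holds exactly when $\alpha\le\frac{3}{5}$, which finishes the proof. This also shows that your expectation of verifying ``that $\frac{3}{5}$ is the exact borderline'' is misplaced: the threshold is an artifact of the sufficient condition $1+\alpha\ge\frac{8\alpha}{3}$ combined with the bound on $f$, not a sharp transition in the behavior of $E_{\alpha,1}$ on the imaginary axis — indeed the paper conjectures the absence of such zeros for all $\alpha\in(0,1)$. Without the input from \cite{PS13} (or a completed positivity proof of your own), your part (i) stands only for $\alpha\le\frac{1}{2}$.
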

\begin{proof} $(\mbox{i})$ We proceed by considering two cases: $(\mbox{a})$ $\alpha\in \left(0,\frac{1}{2}\right]$ and $(\mbox{b})$ $\left(\frac{1}{2},\frac{3}{5}\right]$.
\begin{itemize}
    \item[(a)] We assume that $E_{\alpha,1}(\mathrm{i} t_0)=0$ for some $t_0\in\mathbb{R}$. Then, by the key formula \eqref{kf}, we have $E_{2 \alpha,1}\left(-t_0^2\right)=0$. Since $0<2\alpha\le 1$, Lemma \ref{lmcm} implies that the function $E_{2 \alpha,1}\left(-t\right)$ is completely monotone for $t\ge 0$. In particular, $E_{2 \alpha,1}\left(-t\right)$ is non-negative and non-increasing. Therefore, we obtain $E_{2 \alpha,1}\left(-t\right)=0$ for all $t\ge t_0^2$. Holomorphy of $E_{2 \alpha,1}\left(-z\right)$ implies $E_{2 \alpha,1}\left(-t\right)=0$ for all $t> 0$, which is a contradiction with $E_{2 \alpha,1}(0)=1$.
    \item[(b)] Using the key formula \eqref{kf}, it suffices to argue on the imaginary part, i.e., the function $E_{2\alpha,1+\alpha}(t)$. We set $\rho=\frac{1}{2\alpha}\in [\frac{1}{2},1]$. By \cite[Theorem 6.1.1]{PS13}, there exists a function $f(\rho)$ decreasing on the segment $[\frac{1}{2},1]$ such that for any $\lambda > f(\rho)$, the function $E_{2\alpha,\lambda}(t)$ has no real zeros. Moreover, by \cite[Theorem 6.1.3]{PS13} the function $f$ satisfies
$$f(\rho)<\frac{4}{3\rho}, \qquad \frac{2}{3}<\rho<1.$$
Now, we choose $\lambda=1+\alpha\ge\frac{4}{3\rho}>f(\rho)$, because $\alpha\in \left(\frac{1}{2},\frac{3}{5}\right]$. Therefore, the function $E_{2\alpha,1+\alpha}(t)$ has no real zeros.
\end{itemize}
$(\mbox{ii})$ Since $E_{\alpha, 1}(z)$ is analytic, it suffices to prove that there exists a constant $R>0$ such that $E_{\alpha, 1}(\mathrm{i} t)\neq 0$ for all $t\in\mathbb{R}$ such that $|t|\ge R$.

Using the asymptotic expansion \eqref{asymptotic expansions}, we have
$$
E_{\alpha, 1}(\mathrm{i} t)=\frac{\mathrm{i}}{\Gamma(1-\alpha) t} + \mathcal{O}\left(\frac{1}{t^2} \right), \quad |t|\to \infty.
$$
Note that $\Gamma(1-\alpha)>0$ in this case. Then, using the reverse triangle inequality, there exist $R_1>0$ and $M>0$ such that for all $|t| >R_1$, 
\begin{align*}
|E_{\alpha, 1}(\mathrm{i} t)|&\geq \frac{1}{\Gamma(1-\alpha) |t|} -\frac{M}{t^2}\\
&\geq \frac{1}{|t|}\left(\frac{1}{\Gamma(1-\alpha)}-\frac{M}{|t|}\right).
\end{align*}
Therefore, $|E_{\alpha, 1}(\mathrm{i} t)|>0$ for all $|t| > R=\max\{R_1,\Gamma(1-\alpha)M\}$.
\end{proof}

\begin{rmk}\label{rmk3/5}
When $\alpha\in\left(\frac{3}{5}, 1\right)$, the question of whether the Mittag-Leffler function has zeros along the entire imaginary axis is subtle. The complete monotonicity argument is no longer valid. The result above provides partial insight. As shown in Figure \ref{Fig}, we see that the Mittag-Leffler function $E_{\alpha,1}(z)$ has no zeros on the entire imaginary axis. Moreover, by the key formula \eqref{kf}, we can look at the real and imaginary parts of $E_{\alpha,1}(\mathrm{i}t)$. From numerical computation, we see that both can have real zeros, but not simultaneously (see e.g., Figure \ref{Fig_2}).
\end{rmk}

\begin{figure}[H]
\centering
\includegraphics[scale=0.6]{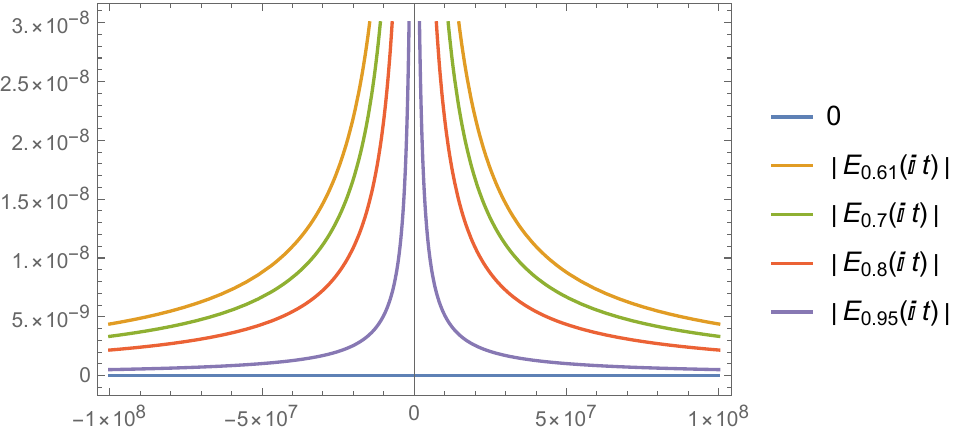}
\caption{$|E_{\alpha,1}(\mathrm{i}t)|$ for $\alpha\in \{0.61,0.7,0.8,0.95\}$.}
\label{Fig}
\end{figure}

\begin{figure}[H]
\centering
\includegraphics[scale=0.6]{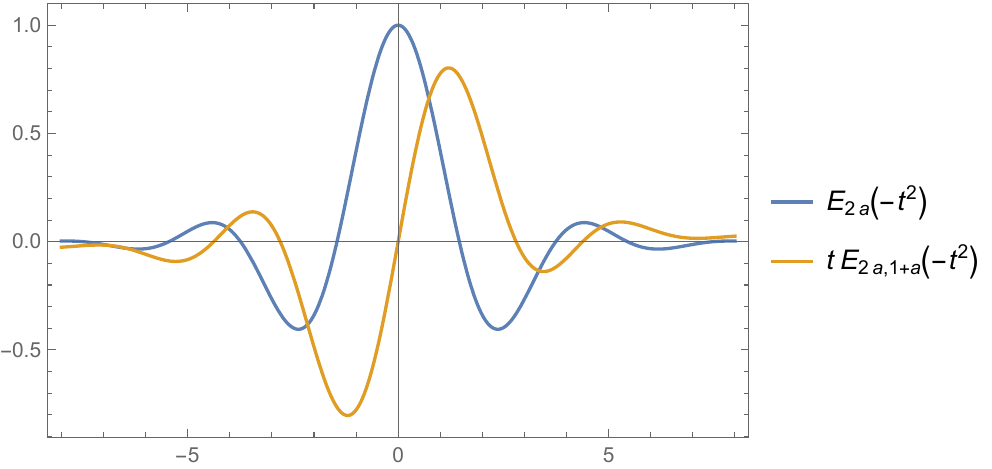}
\caption{$E_{2\alpha,1}(-t^2)$ and $t E_{2\alpha,1+\alpha}(-t^2)$ for $\alpha=0.8$.}
\label{Fig_2}
\end{figure}

We also need the following proposition for the backward problem when $\nu=\alpha$.
\begin{prop}\label{zeros2}
Let $\alpha\in (0,1)\cup (1,2)$. Then $E_{\alpha,1}(\mathrm{i}^{-\alpha} t)\neq 0$ for all $t>0$.
\end{prop}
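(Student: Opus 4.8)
The plan is to exploit the fact that the ray traced by $z=\mathrm{i}^{-\alpha}t$, $t>0$, is precisely the anti-Stokes ray of $E_{\alpha,1}$. Writing $\mathrm{i}^{-\alpha}=\mathrm{e}^{-\mathrm{i}\alpha\pi/2}$, we have $\arg z=-\alpha\pi/2$ and, on the principal branch, $z^{1/\alpha}=t^{1/\alpha}\mathrm{e}^{-\mathrm{i}\pi/2}=-\mathrm{i}\,t^{1/\alpha}$, which is purely imaginary. Hence $\mathrm{Re}(z^{1/\alpha})=0$ and $\left|\mathrm{e}^{z^{1/\alpha}}\right|=1$ for every $t>0$. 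This one observation handles both regimes $\alpha\in(0,1)$ and $\alpha\in(1,2)$ at once: along this ray the exponential term in the asymptotics of $E_{\alpha,1}$ neither grows nor decays, so for $|z|$ large it cannot be annihilated by the algebraic tail.

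First I would dispose of large $t$. Since $|\arg z|=\alpha\pi/2<\mu$, the expansion \eqref{asymptotic expansions2} applies with $\beta=1$ and $p=1$, giving
$$E_{\alpha,1}(z)=\frac{1}{\alpha}\mathrm{e}^{z^{1/\alpha}}-\frac{1}{\Gamma(1-\alpha)}\frac{1}{z}+\mathcal{O}\!\left(\frac{1}{|z|^{2}}\right),\qquad |z|=t\to\infty .$$
Because the leading term has modulus $1/\alpha$ while the remaining terms are $\mathcal{O}(1/t)$ (here $\Gamma(1-\alpha)$ is a finite nonzero constant for both $\alpha\in(0,1)$ and $\alpha\in(1,2)$), the reverse triangle inequality yields $\left|E_{\alpha,1}(\mathrm{i}^{-\alpha}t)\right|\ge \tfrac{1}{\alpha}-C/t$. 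Thus there is $R>0$ with $E_{\alpha,1}(\mathrm{i}^{-\alpha}t)\neq 0$ for all $t\ge R$.

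It remains to exclude zeros for $0<t\le R$, and this is where I expect the main difficulty to lie: on a bounded piece of the ray the asymptotics are useless, and $E_{\alpha,1}$ genuinely has zeros accumulating near the rays $\arg z=\pm\alpha\pi/2$. The strategy is to show that none of them lies in the closed critical sector, i.e. that $\mathrm{Re}(z^{1/\alpha})\ge 0$ already forces $E_{\alpha,1}(z)\neq0$. Concretely, I would use the exact Hankel-type representation underlying \eqref{asymptotic expansions2} (from the derivation in \cite{podlubny99}), namely $E_{\alpha,1}(z)=\tfrac{1}{\alpha}\mathrm{e}^{z^{1/\alpha}}+I(z)$ with $I(z)=\tfrac{1}{2\pi\mathrm{i}}\int_{\gamma}\tfrac{s^{\alpha-1}\mathrm{e}^{s}}{s^{\alpha}-z}\,\mathrm{d}s$ over a contour $\gamma$ whose rays make an angle $\mu\in(\alpha\pi/2,\min(\pi,\alpha\pi))$ with the positive real axis. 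A zero on the ray would force $|I(z)|=\tfrac{1}{\alpha}$, since $\left|\mathrm{e}^{z^{1/\alpha}}\right|=1$; the strict angular gap $\mu-\alpha\pi/2>0$ keeps $s^{\alpha}-z$ bounded away from $0$, and the aim is to convert this into a uniform bound $|I(z)|<1/\alpha$ for all $t>0$, which rules out zeros. The delicate step I expect to fight with is making this estimate uniform down to small $t$ (equivalently, excluding the finitely many low-lying zeros from the closed sector): here I would either track the constants in the contour integral carefully, or invoke the zero-distribution results of \cite{PS13} to confine all zeros of $E_{\alpha,1}$ to the open sectors $\alpha\pi/2<|\arg z|\le\pi$, which leaves the ray $\arg z=-\alpha\pi/2$ zero-free and concludes the proof.
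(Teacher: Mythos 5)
Your proposal ultimately rests on the same pillar as the paper's proof, but the route you present as primary has a genuine gap, and the route you offer as a fallback \emph{is} the paper's proof. The Hankel-contour step --- a uniform strict bound $|I(z)|<1/\alpha$ along the whole ray $\arg z=-\pi\alpha/2$ --- is not carried out, and it is not a constant-chasing technicality: for intermediate $|z|$ (where neither the asymptotics nor continuity near $z=0$ helps) such a uniform bound on the closed critical sector is essentially equivalent to the zero-free-sector theorems you would otherwise cite, so as a proof sketch this route is close to circular unless the estimate is actually produced. What closes the argument is your fallback: invoking the zero-distribution literature to confine all zeros of $E_{\alpha,1}$ outside the \emph{closed} sector $|\arg z|\le\pi\alpha/2$. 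That is precisely what the paper does, citing \cite{OP97}: Theorem 1-(i) for $0<\alpha<1$ (where $\rho=1/\alpha>1$) and Theorem 1-(iii) for $1<\alpha<2$ (where $\tfrac{1}{2}<\rho<1$); since $|\arg(\mathrm{i}^{-\alpha}t)|=\pi\alpha/2$ lies on the boundary of that zero-free sector, the conclusion follows for all $t>0$ at once, with no case split between large and bounded $t$. Once this is invoked, your large-$t$ analysis via \eqref{asymptotic expansions2} --- which is correct, including the observations that $\mathrm{Re}(z^{1/\alpha})=0$ on the ray and that $\Gamma(1-\alpha)$ is finite and nonzero for both ranges of $\alpha$ --- becomes redundant. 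One detail you handled correctly and that is easy to get wrong: the sector result must hold for the closed sector, boundary included; a statement excluding zeros only from the open sector $|\arg z|<\pi\alpha/2$ would prove nothing here, since the ray in question is exactly the boundary.
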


\begin{proof}
Assume first that $0<\alpha<1$. Then $\rho=\frac{1}{\alpha}>1$. By \cite[Theorem 1-(i)]{OP97}, all zeros of the function $E_{\alpha,1}(z)$ lie outside the angle
\begin{equation}\label{sect}
|\arg z|\leq \frac{\pi \alpha}{2}.
\end{equation}
Since $|\arg (\mathrm{i}^{-\alpha} t)|=\frac{\pi \alpha}{2}$, $\mathrm{i}^{-\alpha} t$ is not a zero of $E_{\alpha,1}(z)$ for all $t>0$.

If $1<\alpha<2$, then $\rho=\frac{1}{\alpha}$ satisfies $\frac{1}{2}<\rho<1$, $1\ge \frac{1}{\rho}-1$, and we have $|\arg (\mathrm{i}^{-\alpha} t)|=\frac{\pi \alpha}{2}$. By \cite[Theorem 1-(iii)]{OP97}, again all zeros of the function $E_{\alpha,1}(z)$ lie outside the angle \eqref{sect}. This completes the proof.
\end{proof}

\section*{Notational setting}
Throughout the paper, we assume that $(H,\langle \cdot,\cdot \rangle)$ is a Hilbert space, $\|\cdot\|:=\sqrt{\langle \cdot,\cdot \rangle}$, $A:D(A)\subset H\rightarrow H$ is a negative-definite self-adjoint operator with compact resolvent.
We set $\sigma(-A) = \{\lambda_n\}_{n\in \mathbb{N}}\subset \mathbb{R}^{+}$ for the spectrum of $-A$ and $\{\varphi_n\}_{n\in \mathbb{N}}\subset D(A)$ for the corresponding Hilbert basis of eigenvectors. In particular, the operator $A$ is such that $$\langle Au,u \rangle <0 \qquad \forall u\in D(A)\setminus \{0\},$$
which means that the spectrum $\sigma(-A) = \{\lambda_n\}_{n\in \mathbb{N}}$ satisfies
$$0<\lambda_1 \le \lambda_2 \le \cdots \le \lambda_n \le \cdots\to \infty.$$
Moreover, for all $s\ge 0$, we denote by $(-A)^s$ the operator defined by
\begin{align*}
\begin{cases}
    D((-A)^s)=H_s,\\
    (-A)^s u=\displaystyle\sum_{n=1}^{\infty}\lambda_{n}^s\langle u, \varphi_n \rangle \varphi_n \qquad \forall u\in H_s,
\end{cases}
\end{align*}
where
\begin{align}
	H_s:=\left\{u\in H\colon \sum_{n=1}^{\infty}\lambda_{n}^{2s}|\langle u, \varphi_n \rangle|^2 <\infty\right\}
\end{align}
is a Hilbert space equipped with the norm $\|\cdot\|_s$:
\begin{align}
	\|u\|_s^2=\sum_{n=1}^{\infty}\lambda_{n}^{2s}|\langle u, \varphi_n \rangle|^2.
\end{align}
Note that $H_0=H$, $\|\cdot\|_0=\|\cdot\|$ and $H_1=D(A)$. By duality and identification $H=H^{\prime}$, we can also set $H_{-s}:=H_s^{\prime}$ which consists
of bounded linear functionals on $H_s$ with the norm $\|\cdot\|_{-s}$:
\begin{align}
	\|u\|_{-s}^2=\sum_{n=1}^{\infty}\lambda_{n}^{-2s}|\langle u, \varphi_n \rangle_{-s,s}|^2.
\end{align}
Here $\langle\cdot,\cdot\rangle_{-s,s}$ denotes the duality bracket between $H_{-s}$ and $H_s$. We further note that:
\begin{align}
	\langle u, v\rangle_{-s,s}=\langle u, v\rangle\quad \mbox{if}\; u\in H\;\mbox{and}\; v\in H_s.
\end{align}

In the sequel, we sometimes use the notation $a \lesssim b$ to indicate that there exists a constant $C > 0$ that is independent of the quantities under consideration, such that $a \le C b$.

\section{The case $\nu=1$ and $0<\alpha<1$}\label{sec3}
\subsection{Well-posedness of the forward problem}
Let $0<\alpha<1$ and consider the following problem
\begin{equation}\label{fseq}
\begin{cases}
\mathrm{i}\partial_t^\alpha u +Au=0, &\qquad t \in (0,T),\\
u(0)=u_0,
\end{cases}
\end{equation}
where $T>0$, and the Caputo derivative $\partial_t^\alpha u$ of fractional order $\alpha\in (0,1)$ is defined by
\begin{equation}\label{cap01}
\partial_t^\alpha u(t)=\frac{1}{\Gamma(1-\alpha)} \int_0^t(t-s)^{-\alpha}\partial_s u(s)\, \d s,
\end{equation}
whenever the right-hand side is well-defined.

Applying the Fourier approach and the Laplace transform in the temporal variable, we obtain the formal solution to \eqref{fseq}:
\begin{align}
	u(t)=\sum_{n=1}^{\infty}\left\langle u_0, \varphi_n\right\rangle E_{\alpha, 1}\left(-\mathrm{i}\lambda_n t^\alpha\right) \varphi_n. \label{solution formula}
	\end{align}
\begin{lem} \label{regularity1}
	Let $s\geq 0$ and $u_0\in H_s$. The solution \eqref{solution formula} satisfies the following regularity properties:
	\begin{itemize}
		\item[(i)] $u\in C([0,T];H_s)$ and $\|u\|_{C([0,T];H_s)}\lesssim \|u_0\|_s$.
		\item[(ii)] $u\in L^2(0,T;H_{s+1/2})$ and $\|u\|_{L^2(0,T;H_{s+1/2})}\lesssim \|u_0\|_s$.
		\item[(iii)] $u\in C((0,T];H_{s+1})$ and $\|u(t)\|_{s+1}\lesssim t^{-\alpha}\|u_0\|_s$.
	\end{itemize}
	\end{lem}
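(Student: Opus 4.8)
The plan is to exploit the orthonormal expansion \eqref{solution formula} together with Parseval's identity, which turns every $H_s$-type norm of $u(t)$ into a weighted $\ell^2$ sum over the spectrum; the whole argument then reduces to scalar estimates for the coefficients $E_{\alpha,1}(-\mathrm{i}\lambda_n t^\alpha)$, for which the single ingredient \eqref{es0} suffices. Concretely, since $\{\varphi_n\}$ is a Hilbert basis,
$$\|u(t)\|_s^2=\sum_{n=1}^\infty \lambda_n^{2s}|\langle u_0,\varphi_n\rangle|^2\,|E_{\alpha,1}(-\mathrm{i}\lambda_n t^\alpha)|^2,$$
and the analogous formulas with exponents $2s+1$ and $2s+2$ govern $\|u(t)\|_{s+1/2}$ and $\|u(t)\|_{s+1}$, respectively.

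For (i) I would insert the crude bound $|E_{\alpha,1}(-\mathrm{i}\lambda_n t^\alpha)|\le C_0$ from \eqref{es0}, giving $\|u(t)\|_s\le C_0\|u_0\|_s$ uniformly in $t\in[0,T]$; the value $E_{\alpha,1}(0)=1$ shows $u(0)=u_0$. Continuity in $H_s$ then follows from a Weierstrass-type argument: each partial sum is continuous (the map $t\mapsto E_{\alpha,1}(-\mathrm{i}\lambda_n t^\alpha)$ is continuous because $E_{\alpha,1}$ is entire), while the tail satisfies $\sum_{n>N}\lambda_n^{2s}|\langle u_0,\varphi_n\rangle|^2|E_{\alpha,1}(-\mathrm{i}\lambda_n t^\alpha)|^2\le C_0^2\sum_{n>N}\lambda_n^{2s}|\langle u_0,\varphi_n\rangle|^2\to 0$ uniformly in $t$, so the series converges uniformly to a continuous limit. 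For (iii) I would instead use the decay half of \eqref{es0}, namely $|E_{\alpha,1}(-\mathrm{i}\lambda_n t^\alpha)|\le C_0/(1+\lambda_n t^\alpha)\le C_0(\lambda_n t^\alpha)^{-1}$, whence $\lambda_n^{2s+2}|E_{\alpha,1}(-\mathrm{i}\lambda_n t^\alpha)|^2\le C_0^2\lambda_n^{2s}t^{-2\alpha}$ and therefore $\|u(t)\|_{s+1}\le C_0 t^{-\alpha}\|u_0\|_s$. The same tail estimate, now with the extra factor $t^{-2\alpha}\le\delta^{-2\alpha}$ uniform on each $[\delta,T]$ with $\delta>0$, yields continuity on $(0,T]$; the factor $t^{-\alpha}$ is precisely what prevents including the endpoint $t=0$.

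Part (ii) is the only place requiring genuine work. After interchanging sum and integral by Tonelli (all terms being nonnegative),
$$\|u\|_{L^2(0,T;H_{s+1/2})}^2=\sum_{n=1}^\infty \lambda_n^{2s+1}|\langle u_0,\varphi_n\rangle|^2\int_0^T|E_{\alpha,1}(-\mathrm{i}\lambda_n t^\alpha)|^2\,\d t,$$
so the claim reduces to the uniform-in-$n$ bound $\lambda_n\int_0^T|E_{\alpha,1}(-\mathrm{i}\lambda_n t^\alpha)|^2\,\d t\lesssim 1$. Using \eqref{es0} together with the elementary inequality $(1+x)^{-2}\le(1+x)^{-1}$, it suffices to control $\lambda_n\int_0^T(1+\lambda_n t^\alpha)^{-1}\,\d t$. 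I would substitute $\tau=\lambda_n^{1/\alpha}t$, which normalizes the integrand to $(1+\tau^\alpha)^{-1}$, and then bound $\int_0^R(1+\tau^\alpha)^{-1}\,\d\tau\lesssim 1+R^{1-\alpha}$ (valid since $0<\alpha<1$). Tracking the exponents gives $\lambda_n\int_0^T(1+\lambda_n t^\alpha)^{-1}\,\d t\lesssim\lambda_n^{1-1/\alpha}+T^{1-\alpha}$, which is bounded uniformly in $n$ because $1-1/\alpha<0$ and $\lambda_n\ge\lambda_1$. The main subtlety here — and the reason a naive bound by $\int_0^\infty(1+\tau^\alpha)^{-2}\,\d\tau$ fails for $\alpha\le\tfrac12$ — is that this latter integral diverges; replacing the square by a single power \emph{before} rescaling is exactly what makes the $\lambda_n$-dependence cancel cleanly across the whole range $\alpha\in(0,1)$.
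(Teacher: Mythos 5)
Your proof is correct, and parts (i) and (iii) follow essentially the paper's own route: the uniform bound $|E_{\alpha,1}(-\mathrm{i}\lambda_n t^\alpha)|\le C_0$ from \eqref{es0} for (i), and the decay bound $C_0/(1+\lambda_n t^\alpha)\le C_0(\lambda_n t^\alpha)^{-1}$ for (iii) (the paper phrases this as $\lambda_n^2t^{2\alpha}/(1+\lambda_nt^\alpha)^2\le 1$, which is the same estimate); your explicit uniform-convergence justification of continuity is a detail the paper leaves implicit. Part (ii) is where you genuinely diverge. The paper never integrates mode by mode: it first proves the pointwise-in-time smoothing estimate $\|u(t)\|_{s+1/2}^2\lesssim t^{-\alpha}\|u_0\|_s^2$, obtained by writing
\begin{equation*}
\frac{\lambda_n^{2s+1}}{(1+\lambda_n t^\alpha)^2}
= t^{-\alpha}\,\frac{\lambda_n t^\alpha}{(1+\lambda_n t^\alpha)^2}\,\lambda_n^{2s}
\qquad\text{and using}\qquad
\sup_{x\ge 0}\frac{x}{(1+x)^2}=\frac14,
\end{equation*}
and then concludes by integrating $t^{-\alpha}$ over $(0,T)$, which is where $\alpha<1$ enters. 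Your route --- Tonelli, then the rescaling $\tau=\lambda_n^{1/\alpha}t$ together with $\int_0^R(1+\tau^\alpha)^{-1}\,\d\tau\lesssim 1+R^{1-\alpha}$, the exponents cancelling to $\lambda_n^{1-1/\alpha}+T^{1-\alpha}$ --- is equally valid on the whole range $\alpha\in(0,1)$, and your observation that the naive bound by $\int_0^\infty(1+\tau^\alpha)^{-2}\,\d\tau$ breaks down for $\alpha\le\tfrac12$ identifies a real pitfall (the paper's trick sidesteps it by trading one power of decay for the factor $t^{-\alpha}$). What the paper's argument buys is a stronger intermediate statement: an explicit rate $\|u(t)\|_{s+1/2}\lesssim t^{-\alpha/2}\|u_0\|_s$ valid at every $t>0$, of which the $L^2(0,T;H_{s+1/2})$ membership is an immediate corollary; your argument yields only the integrated bound, but reduces the problem to a clean, mode-independent scalar computation.
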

\begin{proof}
	Using formula \eqref{solution formula} and $|E_{\alpha, 1}\left(-\mathrm{i}\lambda_n t^\alpha\right)|\leq C_0$ for all $t\ge 0$ (see \eqref{es0}), we obtain
		\begin{align}
		\|u(t)\|^2_s&=\sum_{n=1}^{\infty}\lambda_{n}^{2s}|\left\langle u_0, \varphi_n\right\rangle|^2 |E_{\alpha, 1}\left(-\mathrm{i}\lambda_n t^\alpha\right)|^2 \nonumber\\
		&\lesssim\sum_{n=1}^{\infty}\lambda_{n}^{2s}|\left\langle u_0, \varphi_n\right\rangle|^2 \nonumber\\
		&=\|u_0\|^{2}_s.\nonumber
	\end{align}
    
	Next, using \eqref{es0} and $\frac{\lambda_{n}t^{\alpha}}{(1+\lambda_{n}t^{\alpha})^2}\leq \frac{1}{4}$ for all $t\ge 0$, we obtain
	\begin{align}
		\|u(t)\|^2_{s+1/2}&=\sum_{n=1}^{\infty}\lambda_{n}^{2s+1}|\left\langle u_0, \varphi_n\right\rangle|^2 |E_{\alpha, 1}\left(-\mathrm{i}\lambda_n t^\alpha\right)|^2 \nonumber\\
		&\lesssim \sum_{n=1}^{\infty}\frac{\lambda_{n}^{2s+1}}{(1+\lambda_{n}t^{\alpha})^2}|\left\langle u_0, \varphi_n\right\rangle|^2  \nonumber\\
		&\lesssim t^{-\alpha}\sum_{n=1}^{\infty}\frac{\lambda_{n}t^{\alpha}}{(1+\lambda_{n}t^{\alpha})^2}\lambda_{n}^{2s}|\left\langle u_0, \varphi_n\right\rangle|^2  \nonumber\\
		&\lesssim t^{-\alpha}\sum_{n=1}^{\infty}\lambda_{n}^{2s}|\left\langle u_0, \varphi_n\right\rangle|^2 \nonumber\\
		&\lesssim t^{-\alpha}\|u_0\|^{2}_s. \nonumber
	\end{align}
	Since $\alpha<1$ then $u\in L^2(0,T;H_{s+1/2})$ and $\|u\|_{L^2(0,T;H_{s+1/2})}\lesssim \|u_0\|_s$.
    
    Let $\delta\in (0,T)$ and $t\in [\delta, T]$. Using \eqref{es0} and $\frac{\lambda_{n}^2t^{2\alpha}}{(1+\lambda_{n}t^{\alpha})^2}\leq 1$, we obtain
	\begin{align}
		\|u(t)\|^2_{s+1}&=\sum_{n=1}^{\infty}\lambda_{n}^{2s+2}|\left\langle u_0, \varphi_n\right\rangle|^2 |E_{\alpha, 1}\left(-\mathrm{i}\lambda_n t^\alpha\right)|^2 \nonumber\\
		&\lesssim \sum_{n=1}^{\infty}\frac{\lambda_{n}^{2s+2}}{(1+\lambda_{n}t^{\alpha})^2}|\left\langle u_0, \varphi_n\right\rangle|^2  \nonumber\\
		&\lesssim t^{-2\alpha}\sum_{n=1}^{\infty}\frac{\lambda_{n}^2t^{2\alpha}}{(1+\lambda_{n}t^{\alpha})^2}\lambda_{n}^{2s}|\left\langle u_0, \varphi_n\right\rangle|^2  \nonumber\\
		&\lesssim t^{-2\alpha}\sum_{n=1}^{\infty}\lambda_{n}^{2s}|\left\langle u_0, \varphi_n\right\rangle|^2 \nonumber\\
		&\lesssim t^{-2\alpha}\|u_0\|^{2}_s  \nonumber\\
		&\lesssim \delta^{-2\alpha}\|u_0\|^{2}_s<\infty. \nonumber
	\end{align}
	Hence $u\in C((0,T];H_{s+1})$ and $\|u(t)\|_{s+1}\lesssim t^{-\alpha}\|u_0\|_s$.
	\end{proof}
Thanks to formula \eqref{solution formula} and the estimate \eqref{es0}, we observe that the fractional derivative of the solution is defined pointwise in $H$. This motivates the following definition.
\begin{defn} \label{solution_def1}
	We say that $u$ is a weak solution to \eqref{fseq} if the following conditions are fulfilled:
	\begin{itemize}
		\item[(i)] $u\in C([0,T];H)$;
		\item[(ii)] $\partial_{t}^{\alpha}u(t),\, Au(t)\in H$ for almost all $t\in (0,T)$, and \eqref{fseq}$_1$ holds in $H$ for almost all $t\in (0,T)$;
		\item[(iii)] The initial datum $u(0)=u_0$ is fulfilled as follows
		$$\lim_{t\to 0^{+}}\|u(t)-u_0\|=0 \qquad \text{for } u_0\in H.$$
	\end{itemize}
\end{defn} 
We now state the existence, uniqueness, and regularity results for system \eqref{fseq}.
\begin{theorem} \label{Thm_well1}
	Let $0<\alpha < 1$ and $s\geq 0$.
	\begin{itemize}
		\item[(i)] If $u_0\in H_s$, there exists a unique weak solution $u \in C([0, T] ; H_s)$ to \eqref{fseq} such that $\partial_t^\alpha u\in C((0, T] ; H_s)$ and the following estimates hold
	\begin{align}
		&\|u\|_{C([0,T];H_s)}\lesssim \|u_0\|_s, \label{estime1}\\ 
		&\|\partial^{\alpha}_tu(t)\|_s\lesssim \frac{1}{t^{\alpha}}\|u_{0}\|_s \qquad \forall t\in (0,T]. \label{estime2}
	\end{align}
    \item[(ii)] If $u_0\in H_{s+1/2}$, the unique weak solution to \eqref{fseq} satisfies $u \in L^2(0,T; H_{s+1})$, $\partial_t^\alpha u \in L^2(0,T; H_s)$ and the following estimate holds
    \begin{align*}
    	\|u\|_{L^2(0,T; H_{s+1})}+ \|\partial_t^\alpha u\|_{L^2(0,T; H_s)}\lesssim \|u_0\|_{s+1/2}. 
    \end{align*}
	\item[(iii)] If $u_0\in H_{s+1}$, the unique weak solution to \eqref{fseq} satisfies $u \in C([0, T] ; H_{s+1})$, $\partial_t^\alpha u \in C([0, T] ; H_s)$ and the following estimate holds
	\begin{align*}
		\|u\|_{C([0,T];H_{s+1})}+ \|\partial_t^\alpha u\|_{C([0,T];H_s)}\lesssim \|u_0\|_{s+1}.
	\end{align*}
\end{itemize}
	Moreover, the unique weak solution to \eqref{fseq} is given by the formula \eqref{solution formula}.
\end{theorem}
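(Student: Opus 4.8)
The plan is to build existence from the explicit series \eqref{solution formula}, to reduce every regularity claim to Lemma \ref{regularity1} applied at shifted smoothness levels, and then to prove uniqueness by projecting onto the eigenbasis. The organizing observation is that the equation formally gives $\partial_t^\alpha u = \mathrm{i} Au$, and since $\|Au(t)\|_s = \|u(t)\|_{s+1}$ by definition of the norms, every assertion about $\partial_t^\alpha u$ in $H_s$ is equivalent to an assertion about $u$ in $H_{s+1}$. Concretely, the $u$-part of (i) together with \eqref{estime1} is exactly Lemma \ref{regularity1}(i); the $u$-part of (ii), namely $u\in L^2(0,T;H_{s+1})$ for $u_0\in H_{s+1/2}$, is Lemma \ref{regularity1}(ii) read at level $s+1/2$; and the $u$-part of (iii), namely $u\in C([0,T];H_{s+1})$ for $u_0\in H_{s+1}$, is Lemma \ref{regularity1}(i) read at level $s+1$. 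The genuinely new work is therefore (a) to show the formula actually solves the equation, which automatically transfers all $u$-regularity to $\partial_t^\alpha u=\mathrm{i} Au$ (in particular \eqref{estime2} follows from Lemma \ref{regularity1}(iii) via $\|\partial_t^\alpha u(t)\|_s=\|u(t)\|_{s+1}$), and (b) uniqueness.

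Next I would settle the continuity and initial-condition requirements of Definition \ref{solution_def1}. Continuity of $u$ in $H_s$ at an interior point follows termwise from continuity of each $t\mapsto E_{\alpha,1}(-\mathrm{i}\lambda_n t^\alpha)$ together with the uniform bound \eqref{es0}, which dominates the tail $\sum_{n>N}\lambda_n^{2s}|\langle u_0,\varphi_n\rangle|^2$ and permits passage to the limit by dominated convergence for series; the same bound at $t=0$, where $E_{\alpha,1}(0)=1$, yields $\|u(t)-u_0\|_s\to 0$ and hence the initial datum. The key differentiation identity is \eqref{derivative0_ML} with $\omega=-\mathrm{i}\lambda_n$, which produces the candidate derivative $w(t):=\sum_n\langle u_0,\varphi_n\rangle(-\mathrm{i}\lambda_n)E_{\alpha,1}(-\mathrm{i}\lambda_n t^\alpha)\varphi_n$; by \eqref{es0} this series converges in $H_s$ for every $t>0$, and on each $[\delta,T]$ its tails are uniformly controlled by $\delta^{-2\alpha}\sum_{n>N}\lambda_n^{2s}|\langle u_0,\varphi_n\rangle|^2$, giving $w\in C((0,T];H_s)$, and $w\in C([0,T];H_s)$ when $u_0\in H_{s+1}$.

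The main obstacle is to prove that this $w$ is genuinely the Caputo derivative of the series $u$, i.e. to interchange the nonlocal operator $\partial_t^\alpha$ with the infinite sum. I would do this through the partial sums $u^N(t)=\sum_{n\le N}\langle u_0,\varphi_n\rangle E_{\alpha,1}(-\mathrm{i}\lambda_n t^\alpha)\varphi_n$, for which \eqref{derivative0_ML} applies termwise and the fractional fundamental theorem gives $u^N = u_0^N + I^\alpha(\partial_t^\alpha u^N)$, where $I^\alpha$ is the Riemann--Liouville integral and $u_0^N$ the truncation of $u_0$. Since $\alpha<1$, the bound $\|w(t)-\partial_t^\alpha u^N(t)\|\lesssim t^{-\alpha}\big(\sum_{n>N}|\langle u_0,\varphi_n\rangle|^2\big)^{1/2}$ shows $\partial_t^\alpha u^N\to w$ in $L^1(0,T;H)$; combined with $u^N\to u$ uniformly, $u_0^N\to u_0$, and boundedness of $I^\alpha$ on $L^1(0,T;H)$, passing to the limit yields $u=u_0+I^\alpha w$, whence $\partial_t^\alpha u=w$. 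Finally $\mathrm{i} w+Au=\sum_n\langle u_0,\varphi_n\rangle(\lambda_n-\lambda_n)E_{\alpha,1}(-\mathrm{i}\lambda_n t^\alpha)\varphi_n=0$ in $H$ for every $t\in(0,T)$, so $u$ is a weak solution.

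For uniqueness I would let $u$ be any weak solution and set $u_n(t)=\langle u(t),\varphi_n\rangle$. Since $\langle\cdot,\varphi_n\rangle$ is a bounded linear functional it commutes with the Caputo integral, and self-adjointness gives $\langle Au(t),\varphi_n\rangle=\langle u(t),A\varphi_n\rangle=-\lambda_n u_n(t)$; hence each $u_n$ solves the scalar problem $\partial_t^\alpha u_n=-\mathrm{i}\lambda_n u_n$, $u_n(0)=\langle u_0,\varphi_n\rangle$, whose unique solution is $\langle u_0,\varphi_n\rangle E_{\alpha,1}(-\mathrm{i}\lambda_n t^\alpha)$. Expanding $u$ in the basis then forces it to coincide with \eqref{solution formula}, which proves both uniqueness and the final ``moreover'' assertion.
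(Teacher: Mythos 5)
Your proposal is correct and follows essentially the same route as the paper: existence and every regularity claim are read off from the explicit series \eqref{solution formula} and Lemma \ref{regularity1} via the identity $\|\partial_t^\alpha u(t)\|_s=\|Au(t)\|_s=\|u(t)\|_{s+1}$, and uniqueness is proved by projecting a weak solution onto the eigenbasis and solving the resulting scalar fractional ODE \eqref{ode} by the Laplace transform. The only substantive difference is that you rigorously justify the term-by-term Caputo differentiation (partial sums $u^N$, the identity $u^N=u_0^N+I^\alpha\partial_t^\alpha u^N$, and $L^1$-boundedness of $I^\alpha$), a step the paper simply asserts, so your write-up is if anything more complete than the original.
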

\begin{proof}
	We begin by proving the uniqueness of the weak solution to \eqref{fseq}. For that, let $u_0\in H$ and assume that $u$ is a weak solution to \eqref{fseq}. We proceed to decompose 
	$u(t)$ and $u_0$ as follows:
	\begin{align*}
		u(t)&=\sum_{n=1}^{\infty}u_n(t)\varphi_n,\quad t\in [0,T],\\
		u_0&=\sum_{n=1}^{\infty}\left\langle u_0, \varphi_n\right\rangle\varphi_n.
	\end{align*}
    Since $\partial_t^{\alpha}u$ and $Au$ are defined pointwise in $H$.
	Then, formally, we can write:
	\begin{align}
		\partial_t^\alpha u(t)&=\sum_{n=1}^{\infty}\partial_t^\alpha u_n(t)\varphi_n\quad \forall t\in (0,T], \label{serie1}\\
		Au(t)&=\sum_{n=1}^{\infty}-\lambda_{n}u_n(t)\varphi_n\quad \forall t\in (0,T]. \label{serie2}
	\end{align}
	Hence, $u$ is a weak solution to \eqref{fseq} 
if and only if each $u_n$ satisfies the following fractional ODE:
	\begin{equation}\label{ode}
		\begin{cases}
			\partial_t^\alpha u_n(t) + \mathrm{i} \lambda_{n}u_n(t) = 0, &\qquad t \in (0,T),\\
			u_n(0)=\left\langle u_0, \varphi_n\right\rangle.
		\end{cases}
	\end{equation}
	Using the Laplace transform, the unique solution $u_n$ of \eqref{ode} is given by
	\begin{align*}
		u_n(t)=\left\langle u_0, \varphi_n\right\rangle E_{\alpha,1}(-\mathrm{i} \lambda_{n} t^{\alpha}).
	\end{align*}
	Consequently, we obtain the formula \eqref{solution formula}, which guarantees the uniqueness of the solution when it exists. We now show that formula \eqref{solution formula} indeed defines a weak solution to \eqref{fseq}. Using Lemma \ref{regularity1}, we obtain
	\begin{align}
		&\|u(t)\|\lesssim \|u_0\|, \nonumber\\
		&\|\partial_{t}^{\alpha}u(t)\|=\|Au(t)\|=\|u(t)\|_1\lesssim t^{-\alpha}\|u_0\|<\infty \nonumber,
	\end{align} 
	and 
	\begin{align}
		\|u(t)-u_0\|^2=\sum_{n=1}^{\infty}|\left\langle u_0, \varphi_n\right\rangle|^2 |E_{\alpha, 1}\left(-\mathrm{i}\lambda_n t^\alpha\right)-1|^2\lesssim \|u_0\|^2. \nonumber
	\end{align}
	By applying term-by-term differentiation, we obtain
	\begin{align*}
            & u(t)=\sum_{n=1}^{\infty}\left\langle u_0, \varphi_n\right\rangle E_{\alpha, 1}\left(-\mathrm{i}\lambda_n t^\alpha\right) \varphi_n\in H \qquad \forall t\in [0,T],\\
		&\partial_t^\alpha u(t)=-\mathrm{i}\sum_{n=1}^{\infty}\lambda_{n}\langle u_0,\varphi_{n}\rangle E_{\alpha,1}(-\mathrm{i}\lambda_{n}t^{\alpha})\varphi_n\in H\qquad \forall t\in (0,T], \\
		&Au(t)=-\sum_{n=1}^{\infty}\lambda_{n}\langle u_0,\varphi_{n}\rangle E_{\alpha,1}(-\mathrm{i}\lambda_{n}t^{\alpha})\varphi_n\in H\qquad \forall t\in (0,T],\\
		&\lim_{t\to 0^{+}}\|u(t)-u_0\|=0.
	\end{align*}
Consequently, formula \eqref{solution formula} gives the unique weak solution to \eqref{fseq}. To establish the regularity stated in Theorem \ref{Thm_well1}, it suffices to observe that
\begin{align*}
	\|\partial_t^{\alpha}u(t)\|_s=\|Au(t)\|_s=\|u(t)\|_{s+1},\qquad s\ge 0,
\end{align*}
and use Lemma \ref{regularity1}.
\end{proof}

\subsection{Well-posedness of the backward problem}
In this section, we prove the backward uniqueness properties of the solution \eqref{fseq} and stability estimates for the backward problem.
\begin{theorem}[\textbf{Well-posedness}]\label{bu}\
\\{\bf (i) Case $0<\alpha \le \frac{3}{5}$:}
\begin{itemize}
    \item[\bf (a)] {\bf (uniqueness)}
Let $u_0 \in H$. If the solution $u$ to equation \eqref{fseq} 
satisfies $u(T) = 0$, then $u_0 = 0$ and hence $u(t) = 0$ for all 
$t \in [0, T]$.
\item[\bf (b)] {\bf (existence)} 
For any $u_T \in H_1$, there exists a unique weak solution $u \in C([0, T]; H)\cap 
C((0, T] ; H_1)$ to \eqref{fseq} such that $\partial_t^\alpha u, Au 
\in C((0, T] ; H)$ and $u(T)=u_T$;
\item[\bf (c)] {\bf (stability)}
There exist constants $C_1, C_2 >0$ such that for any solution $u$ to 
\eqref{fseq},
\begin{equation}\label{eqthmhstabLip}
C_1\|u(0)\| \le \|u(T)\|_{1} \le C_2 \|u(0)\|.
\end{equation}
\end{itemize}
{\bf (ii) Case $\frac{3}{5} < \alpha < 1$:}
\begin{itemize}
\item[\bf (a)] Assume that $T$ is sufficiently large. 
Then, for any $u_T \in H_1$, there exists a unique weak solution $u \in C([0, T] ; H)
\cap C((0, T]; H_1)$ to \eqref{fseq} such that $\partial_t^\alpha u, Au 
\in C((0, T] ; H)$ and $u(T)=u_T$.
Moreover, there exist constants $C_1, C_2 >0$ such that for any solution 
$u$ to \eqref{fseq}, we have
\begin{equation}\label{eqthmhstabLip2}
C_1\|u(0)\| \le \|u(T)\|_{1} \le C_2 \|u(0)\|.
\end{equation}
\item[\bf (b)] For any $T>0$, we have
$$
\mathrm{dim}\, \{u_0 \in H: \; u(T) =  0\} < \infty.
$$
\end{itemize}
\end{theorem}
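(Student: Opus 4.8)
The plan is to read every assertion off the eigenvector expansion \eqref{solution formula}, so that $u(T)=\sum_{n}\langle u_0,\varphi_n\rangle E_{\alpha,1}(-\mathrm{i}\lambda_n T^\alpha)\varphi_n$ and each claim reduces to a statement about the scalar multipliers $m_n:=E_{\alpha,1}(-\mathrm{i}\lambda_n T^\alpha)$. The key preliminary observation is that $E_{\alpha,1}$ has real Taylor coefficients, hence $\overline{E_{\alpha,1}(\mathrm{i}t)}=E_{\alpha,1}(-\mathrm{i}t)$ for $t\in\mathbb{R}$; thus $m_n$ vanishes exactly when $E_{\alpha,1}(\mathrm{i}\lambda_n T^\alpha)$ does, and Proposition \ref{zeros} applies directly with $t=\lambda_n T^\alpha>0$. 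In Case (i) ($0<\alpha\le\frac{3}{5}$), Proposition \ref{zeros}(i) gives $m_n\ne 0$ for every $n$, so uniqueness (a) is immediate: $u(T)=0$ forces $\langle u_0,\varphi_n\rangle m_n=0$, hence $\langle u_0,\varphi_n\rangle=0$ for all $n$ and $u_0=0$.

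For (b) and (c) I would establish the two-sided bound
$$\frac{c_0}{1+\lambda_n T^\alpha}\le |m_n|\le\frac{C_0}{1+\lambda_n T^\alpha},\qquad n\ge 1,$$
whose upper half is exactly \eqref{es0}. For the lower half I would apply the asymptotic expansion \eqref{asymptotic expansions} with $p=1$ at $z=-\mathrm{i}\lambda_n T^\alpha$, whose argument $-\frac{\pi}{2}$ lies in the sector of validity for a suitable $\mu\le\frac{\pi}{2}$; this yields $|m_n|\sim(\Gamma(1-\alpha)\lambda_n T^\alpha)^{-1}$ as $n\to\infty$, fixing the decay rate for large $n$, while for the finitely many small $n$ the nonvanishing of $m_n$ keeps $(1+\lambda_n T^\alpha)|m_n|$ bounded below. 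Given the lower bound, for $u_T\in H_1$ I would set $\langle u_0,\varphi_n\rangle:=\langle u_T,\varphi_n\rangle/m_n$; then $\|u_0\|^2\lesssim\sum_{n}(1+\lambda_n T^\alpha)^2|\langle u_T,\varphi_n\rangle|^2\lesssim\|u_T\|_1^2$, so $u_0\in H$ and Theorem \ref{Thm_well1} yields the stated solution with $u(T)=u_T$. Stability (c) follows by substituting the two-sided bound into $\|u(T)\|_1^2=\sum_{n}\lambda_n^2|\langle u_0,\varphi_n\rangle|^2|m_n|^2$ and using that $\lambda_n^2(1+\lambda_n T^\alpha)^{-2}$ is bounded above by $T^{-2\alpha}$ and below by $(\lambda_1/(1+\lambda_1 T^\alpha))^2$, uniformly in $n$.

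For Case (ii) ($\frac{3}{5}<\alpha<1$), the proof of Proposition \ref{zeros}(ii) shows the real zeros of $E_{\alpha,1}(\mathrm{i}\,\cdot)$ lie in a bounded set $\{|t|\le R\}$. For (a) I would take $T$ so large that $\lambda_1 T^\alpha>R$; then $\lambda_n T^\alpha>R$ for all $n$, no multiplier vanishes, and the continuous positive function $s\mapsto(1+s)|E_{\alpha,1}(-\mathrm{i}s)|$ on $[\lambda_1 T^\alpha,\infty)$ has a strictly positive limit at $+\infty$ (by \eqref{asymptotic expansions}, since $\Gamma(1-\alpha)>0$ here), hence a positive infimum; this revives the two-sided bound and the existence/stability argument of Case (i) verbatim. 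For (b), at fixed $T$ one has $u(T)=0$ iff $\langle u_0,\varphi_n\rangle=0$ for every $n$ with $m_n\ne 0$, so the kernel is the closed span of $\{\varphi_n: m_n=0\}$; since $E_{\alpha,1}(-\mathrm{i}\,\cdot)$ has only finitely many zeros and each eigenvalue has finite multiplicity, this index set is finite and the kernel is finite-dimensional.

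The step I expect to be the main obstacle is the uniform lower bound $|m_n|\ge c_0(1+\lambda_n T^\alpha)^{-1}$, as it simultaneously drives the membership $u_0\in H$ in the existence proof and the left-hand inequality of the stability estimate. Its difficulty is that it must splice together two different inputs — pointwise nonvanishing from Proposition \ref{zeros} for the finitely many small indices, and the exact first-order decay rate from the asymptotic expansion \eqref{asymptotic expansions} for the large indices — into a single constant valid for all $n$. The cleanest route is to observe that $s\mapsto(1+s)|E_{\alpha,1}(-\mathrm{i}s)|$ is continuous and strictly positive on the relevant half-line with a strictly positive limit at infinity, and is therefore bounded away from zero there.
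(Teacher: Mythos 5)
Your proposal is correct and follows essentially the same route as the paper: reduce everything to the scalar multipliers $E_{\alpha,1}(-\mathrm{i}\lambda_n T^\alpha)$, combine Proposition \ref{zeros} (nonvanishing) with the asymptotic expansion \eqref{asymptotic expansions} to get the uniform bound $|E_{\alpha,1}(-\mathrm{i}\lambda_n T^\alpha)|^{-1}\lesssim 1+\lambda_n T^\alpha\lesssim\lambda_n$, construct $u_0$ by dividing the Fourier coefficients of $u_T$ by the multipliers, and read off the two-sided Lipschitz estimate; the large-$T$ condition $\lambda_1 T^\alpha>R$ and the finite-dimensional kernel in case (ii) are also exactly the paper's arguments. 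The only cosmetic differences are your explicit use of the conjugation symmetry $\overline{E_{\alpha,1}(\mathrm{i}t)}=E_{\alpha,1}(-\mathrm{i}t)$ (unneeded, since Proposition \ref{zeros} covers all real $t$) and your infimum-of-a-continuous-function phrasing of the splicing step, which the paper leaves as "similar."
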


The stability estimates \eqref{eqthmhstabLip} and \eqref{eqthmhstabLip2} are unconditional Lipschitz stability estimates with a stronger norm on the final data $u(T)$. We refer to \cite[Theorem 4.1]{SY11} for fractional diffusion equations and 
\cite[Theorem 1.3]{FY20} for fractional wave equations. Moreover for $0 < \alpha \le \frac{3}{5}$, \eqref{eqthmhstabLip} implies the norm equivalence between $\|u(0)\|$ and $\|u(T)\|_{1}$ for any $T>0$. Therefore, we can prove: for $0< T_1 <T_2\leq T$, there exist constants $\widehat{C}_1, \widehat{C}_2 > 0$ such that
$$
\widehat{C}_1 \Vert u(T_1)\Vert_1 \le \Vert u(T_2)\Vert_1 \le \widehat{C}_2 \Vert u(T_1)\Vert_1,
$$
where $\widehat{C}_1$ and $\widehat{C}_2$ are independent of a choice  of solution $u$.

\begin{proof}\
\\{\bf (i) Case $0<\alpha \le \frac{3}{5}$:}\\
{\bf Proof of (a).}\\
By the solution formula \eqref{solution formula},
$$
u(T)=\sum_{n=1}^{\infty}\left\langle u_0, \varphi_n\right\rangle E_{\alpha, 1}\left(-\mathrm{i}\lambda_n T^\alpha\right) \varphi_n.
$$
Then, for all $n \in \mathbb{N}$, we have
\begin{equation}\label{eqOT}
    \left\langle u_0, \varphi_n\right\rangle E_{\alpha, 1}\left(-\mathrm{i}\lambda_n T^\alpha\right)=\left\langle u(T), \varphi_n\right\rangle=0.
\end{equation}
By Proposition \ref{zeros}-(i), we deduce $\left\langle u_0, \varphi_n\right\rangle=0$ for all $n \in \mathbb{N}$, and then $u_0=0$.
\smallskip

\noindent{\bf Proof of (b) and (c).}\\
It suffices to determine an initial datum 
	$u_0\in H$ such that the corresponding solution 
	$u(t)$ to \eqref{fseq} satisfies $u(T)=u_T$.  
	Since $E_{\alpha, 1}\left(-\mathrm{i}\lambda_{n} T^{\alpha}\right)\neq 0$ for all $n\in\mathbb{N}$ (by Proposition \ref{zeros}-(i)), based on equation \eqref{eqOT}, such an initial datum should be given by:
	\begin{align*}
		u_0
		&:=\sum_{n=1}^{\infty}\frac{\langle u_T,\varphi_n\rangle}{E_{\alpha, 1}\left(-\mathrm{i}\lambda_n T^\alpha\right)} \varphi_n.
	\end{align*}
	Let us now justify that $u_0$ is well-defined and belongs to $H$.

Since $|\arg(-\mathrm{i}\lambda_{n} T^{\alpha})|=\frac{\pi}{2}$, using the asymptotic expansion \eqref{asymptotic expansions}, we have
$$
E_{\alpha,1}(-\mathrm{i}\lambda_nT^{\alpha}) = \frac{1}{\Gamma(1-\alpha)\mathrm{i}\lambda_nT^{\alpha}}
+ \mathcal{O}\left( \frac{1}{\lambda_n^2}\right)
$$
for all large $n \in \mathbb{N}$. Since $E_{\alpha, 1}\left(-\mathrm{i}\lambda_{n} T^{\alpha}\right)\neq 0$ (by Proposition \ref{zeros}-(i)), it follows that there exists a constant $C_1>0$, such that
$$
\left|E_{\alpha, 1}\left(-\mathrm{i}\lambda_{n} T^{\alpha}\right)\right|^{-1} \le C_1 (1+\lambda_n T^\alpha)
$$
for all $n \in \mathbb{N}$. Setting $C=C_1\left(\frac{1}{\lambda_{1}}+T^\alpha\right)$, we obtain
\begin{align}\label{minora_Mittag}
\left|E_{\alpha, 1}\left(-\mathrm{i}\lambda_{n} T^{\alpha}\right)\right|^{-1} 
\le C\lambda_n
\end{align}
for all $n \in \mathbb{N}$.
    
    Using \eqref{minora_Mittag}, we obtain
	\begin{align*}
		\|u_0\|^2 &=\sum_{n=1}^{\infty}\frac{|\langle u_T,\varphi_n\rangle|^2}{|E_{\alpha, 1}\left(-\mathrm{i}\lambda_n T^\alpha\right)|^2}\\
		&\le C^2\sum_{n=1}^{\infty} \lambda_{n}^2|\langle u_T,\varphi_n\rangle|^2	\\
		&=C^2 \|Au_T\|<\infty.
	\end{align*}
	Let $u$ be the solution of \eqref{fseq} associated with this choice of $u_0$. By the solution formula \eqref{solution formula}, we have
	\begin{align*}
		u(T)&=\sum_{n=1}^{\infty}\left\langle u_0, \varphi_n\right\rangle E_{\alpha, 1}\left(-\mathrm{i}\lambda_n T^\alpha\right) \varphi_n\\
		&=\sum_{n=1}^{\infty}\left\langle u_T, \varphi_n\right\rangle \varphi_n\\
		&=u_T.
	\end{align*}
	Concerning the inequalities \eqref{eqthmhstabLip}, by estimate \eqref{estime2}, we have that
	\begin{align*}
		\|u(T)\|^2_{1}=\|Au(T)\|^2=\|\partial_t^\alpha u(T)\|^2\leq \frac{(C^{\prime})^2}{T^{2\alpha}}\|u_{0}\|^2,
	\end{align*}
	for some constant $C^{\prime}>0$. The reversed inequality follows from 
	\begin{align*}
		\|u(T)\|^2_{1}=\sum_{n=1}^{\infty}|\left\langle u_0, \varphi_n\right\rangle |^2|E_{\alpha, 1}\left(-\mathrm{i}\lambda_n T^\alpha\right)|^2\lambda_{n}^2
	\end{align*}
	and estimate \eqref{es0}.
	Then 
	\begin{align*}
		\frac{1}{C}\|u(0)\|\le \|u(T)\|_{1}\leq \frac{C^{\prime}}{T^{\alpha}}\|u(0)\|.
	\end{align*}
\bigskip

\noindent{\bf (ii) Case $\frac{3}{5} < \alpha < 1$.}\\
{\bf Proof of (a).}\\
Let $T>0$ be sufficiently large such that
$$
\vert \lambda_n \vert T^{\alpha} > R \qquad \text{for all } n\in\mathbb{N},
$$
where $R>0$ is the same constant in the proof of Proposition \ref{zeros}-(ii). Then, the latter implies that $E_{\alpha,1}(-\mathrm{i} \lambda_n T^{\alpha}) \ne 0$. The rest of the proof is similar to \eqref{eqthmhstabLip}.
\smallskip

\noindent{\bf Proof of (b).}\\
Let $u_0 =\displaystyle \sum_{n=0}^\infty \langle u_0,\varphi_n\rangle \varphi_n\in H$ be such that $u(T) = 0$. Using the formula \eqref{solution formula}, we obtain,
\begin{equation*}
    \left\langle u_0, \varphi_n\right\rangle E_{\alpha, 1}\left(-\mathrm{i}\lambda_n T^\alpha\right)=\left\langle u(T), \varphi_n\right\rangle=0
\end{equation*}
for all $n \in \mathbb{N}$. By Proposition \ref{zeros}-(ii), there is at most a finite number of $n\in \mathbb{N}$ such that $\langle u_0,\varphi_n\rangle\neq 0$. Therefore, $u_0 =\displaystyle \sum_{\text{finite}} \langle u_0,\varphi_n\rangle \varphi_n$. This completes the proof.
\end{proof}

Next, we establish weaker conditional stability of H\"older type, but with a weaker norm. We refer to \cite[Theorem 1.4]{Ch24} for time fractional diffusion-like equations.

Henceforth, fix two constants $\varepsilon>0$ and $M>0$, then consider the set of admissible initial data
\begin{equation}\label{uad}
\mathcal{U}_{\varepsilon,M}:=\{u_0\in H_\varepsilon \colon \|u_0\|_{\varepsilon} \le M\}.
\end{equation}

\begin{theorem}\label{thmhstab}
Let $0<\alpha \le \frac{3}{5}$ and $\beta =\frac{\varepsilon}{\varepsilon + 1}$. Let $u$ be the associated weak solution to \eqref{fseq}. There exists a constant $C=C(\alpha,\lambda_{1})$ such that
\begin{equation}\label{eqthmhstab}
    \|u(0)\| \le C^{\beta} M^{1-\beta} \|u(T)\|^{\beta}
\end{equation}
for all $u_0\in \mathcal{U}_{\varepsilon,M}$.
\end{theorem}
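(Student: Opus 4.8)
The plan is to exploit the diagonalization coming from the solution formula \eqref{solution formula}, together with a two-sided control of the Mittag-Leffler coefficients, and then to run a single interpolation (Hölder) inequality whose exponent is forced to be exactly $\beta$. Writing $a_n=\langle u_0,\varphi_n\rangle$ and recalling $u(0)=u_0$, the identity $\langle u(T),\varphi_n\rangle=E_{\alpha,1}(-\mathrm{i}\lambda_n T^\alpha)\,a_n$ shows that the Fourier coefficients of the final and initial data differ only by the scalars $e_n:=E_{\alpha,1}(-\mathrm{i}\lambda_n T^\alpha)$. The whole estimate therefore reduces to comparing the three weighted $\ell^2$-norms $\sum_n|a_n|^2=\|u(0)\|^2$, $\sum_n|e_n|^2|a_n|^2=\|u(T)\|^2$, and $\sum_n\lambda_n^{2\varepsilon}|a_n|^2=\|u_0\|_\varepsilon^2\le M^2$.

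First I would record the lower bound on $|e_n|$ already obtained in the proof of Theorem \ref{bu}: since $\alpha\le\frac35$, Proposition \ref{zeros}-(i) gives $e_n\neq0$, and combined with the asymptotic expansion \eqref{asymptotic expansions} this yields a constant $C_1>0$ with $|e_n|^{-1}\le C_1(1+\lambda_n T^\alpha)$, equivalently $\frac{1}{1+\lambda_n T^\alpha}\le C_1|e_n|$, for all $n$. Squaring and summing gives $\sum_n(1+\lambda_n T^\alpha)^{-2}|a_n|^2\le C_1^2\|u(T)\|^2$.

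The heart of the argument is a pointwise interpolation of each coefficient $|a_n|^2$ between the observation weight $(1+\lambda_n T^\alpha)^{-2}$ and the smoothness weight $\lambda_n^{2\varepsilon}$. I would write, for each $n$,
\begin{equation*}
|a_n|^2=\left(\frac{|a_n|^2}{(1+\lambda_n T^\alpha)^2}\right)^{\beta}\bigl(\lambda_n^{2\varepsilon}|a_n|^2\bigr)^{1-\beta}\,(1+\lambda_n T^\alpha)^{2\beta}\lambda_n^{-2\varepsilon(1-\beta)}.
\end{equation*}
The point of choosing $\beta=\frac{\varepsilon}{\varepsilon+1}$ is precisely that $\varepsilon(1-\beta)=\beta$, so the residual weight collapses to $(1+\lambda_n T^\alpha)^{2\beta}\lambda_n^{-2\beta}=\left(\frac1{\lambda_n}+T^\alpha\right)^{2\beta}\le\left(\frac1{\lambda_1}+T^\alpha\right)^{2\beta}$, using $\lambda_n\ge\lambda_1$; any other exponent would leave an unbounded power of $\lambda_n$. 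Pulling out this bounded weight, summing over $n$, and applying Hölder's inequality with conjugate exponents $1/\beta$ and $1/(1-\beta)$ gives
\begin{equation*}
\|u(0)\|^2\le\left(\frac1{\lambda_1}+T^\alpha\right)^{2\beta}\Bigl(\sum_n\frac{|a_n|^2}{(1+\lambda_n T^\alpha)^2}\Bigr)^{\beta}\bigl(\|u_0\|_\varepsilon^2\bigr)^{1-\beta}.
\end{equation*}
Inserting the bound from the second step and $\|u_0\|_\varepsilon\le M$, then taking square roots, yields $\|u(0)\|\le C^\beta M^{1-\beta}\|u(T)\|^\beta$ with $C=C_1\!\left(\frac1{\lambda_1}+T^\alpha\right)$.

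The only genuine obstacle is the uniform lower bound $|e_n|\gtrsim(1+\lambda_n T^\alpha)^{-1}$ over all $n$: it is exactly here that the restriction $\alpha\le\frac35$ enters, through Proposition \ref{zeros}-(i), which guarantees $e_n\neq0$ so that the asymptotics control every coefficient rather than all but finitely many. Everything else is algebraic and insensitive to the Schrödinger-specific factor $\mathrm{i}$. As an alternative to the Hölder step one may split the frequencies at a threshold $\rho$, bounding the low modes ($\lambda_n\le\rho$) by $C_1^2(1+\rho T^\alpha)^2\|u(T)\|^2$ via the same lower bound and the high modes ($\lambda_n>\rho$) by $\rho^{-2\varepsilon}M^2$, and then optimizing in $\rho$; balancing the two contributions reproduces the same exponent $\beta$ and the same constant.
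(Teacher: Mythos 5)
Your proposal is correct and follows essentially the same route as the paper: both rest on Proposition \ref{zeros}-(i) plus the asymptotic expansion \eqref{asymptotic expansions} to get the lower bound $|E_{\alpha,1}(-\mathrm{i}\lambda_n T^\alpha)|^{-1}\lesssim 1+\lambda_n T^\alpha$, followed by a single H\"older interpolation with exponents $1/\beta$, $1/(1-\beta)$ whose choice $\beta=\frac{\varepsilon}{\varepsilon+1}$ makes the leftover power of $\lambda_n$ collapse. Your pointwise factorization of $|a_n|^2$ is just a reorganization of the paper's splitting of the coefficients of $u(T)$, and it even produces the same constant $C=C_1\left(\frac{1}{\lambda_1}+T^\alpha\right)$.
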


\begin{rmk} Some remarks are in order: 
\begin{itemize}
    \item Theorem \ref{thmhstab} generally remains valid when the operator $A$ additionally possesses a finite number of non-positive eigenvalues. However, we omitted the details for simplicity.
    \item One can prove the optimality of the Hölder rate in such a stability estimate (see, e.g., \cite[Remark 2]{Hao19}).
\end{itemize}
\end{rmk}

\begin{proof}[Proof of Theorem \ref{thmhstab}]
Since $E_{\alpha, 1}\left(-\mathrm{i}\lambda_{n} T^{\alpha}\right)\neq 0$ for all $n\in\mathbb{N}$ (Proposition \ref{zeros}-(i)), using equation \eqref{eqOT} we can write
\begin{align*}
    \|u(0)\|^{2}&= \sum_{n=1}^{\infty}|\langle u_0, \varphi_{n}\rangle|^{2} \\
    & = \sum_{n=1}^{\infty} \frac{|\langle u(T), \varphi_{n}\rangle|^{2}}{\left|E_{\alpha, 1}\left(-\mathrm{i}\lambda_{n} T^{\alpha}\right)\right|^2}.
\end{align*}
Applying Hölder's inequality, we obtain
\begin{align*}
    \|u(0)\|^{2}
    & = \sum_{n=1}^{\infty} \frac{|\langle u(T), \varphi_{n}\rangle|^{2(1-\beta)}}{\left|E_{\alpha, 1}\left(-\mathrm{i}\lambda_{n} T^{\alpha}\right)\right|^{2}} \; |\langle u(T), \varphi_{n}\rangle|^{2\beta}\\
    & \le \left(\sum_{n=1}^{\infty} \frac{|\langle u(T), \varphi_{n}\rangle|^{2}}{\left|E_{\alpha, 1}\left(-\mathrm{i}\lambda_{n} T^{\alpha}\right)\right|^{\frac{2}{1-\beta}}}\right)^{1-\beta} \left(\sum_{n=1}^{\infty} |\langle u(T), \varphi_{n}\rangle|^{2}\right)^\beta\\
    & \le \left(\sum_{n=1}^{\infty} \frac{|\langle u_0, \varphi_{n}\rangle|^{2}}{\left|E_{\alpha, 1}\left(-\mathrm{i}\lambda_{n} T^{\alpha}\right)\right|^{\frac{2\beta}{1-\beta}}}\right)^{1-\beta} \|u(T)\|^{2\beta}.
\end{align*}
Hence, using \eqref{minora_Mittag}, we obtain
\begin{align*}
    \|u(0)\|^{2}& \le C^{2\beta} \left(\sum_{n=1}^{\infty} |\langle u_0, \varphi_{n}\rangle|^{2} \lambda_n^{\frac{2\beta}{1-\beta}}\right)^{1-\beta} \|u(T)\|^{2\beta}\\
    & \le C^{2\beta} \|u_0\|_{\varepsilon}^{2(1-\beta)} \|u(T)\|^{2\beta}.
\end{align*}
Thus, the proof of Theorem \ref{thmhstab} is complete.
\end{proof}

For $\frac{3}{5} < \alpha \le 1$, similarly to Theorem \ref{thmhstab}, we can prove:
\begin{prop}
Assume that $\frac{3}{5}<\alpha <1$, and $T$ is sufficiently large. We set $\beta =\frac{\varepsilon}{\varepsilon + 1}$. Let $u(t)$ be the associated weak solution to \eqref{fseq}. There exists a constant $C=C(\alpha,\lambda_{1})$ such that
\begin{equation}\label{eqthmhstabT}
    \|u(0)\| \le C^{\beta} M^{1-\beta} \|u(T)\|^{\beta}
\end{equation}
for all $u_0\in \mathcal{U}_{\varepsilon,M}$.
\end{prop}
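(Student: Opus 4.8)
The plan is to follow the proof of Theorem~\ref{thmhstab} essentially verbatim; the only genuinely new ingredient is that in the regime $\frac{3}{5}<\alpha<1$ the nonvanishing of $E_{\alpha,1}(-\mathrm{i}\lambda_n T^\alpha)$ is no longer guaranteed for every $T$ by Proposition~\ref{zeros}, so I must instead secure it — together with the reciprocal bound \eqref{minora_Mittag} — uniformly in $n$ by exploiting the hypothesis that $T$ is large.

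First I would fix $T$ large enough that $\lambda_n T^\alpha > R$ for all $n\in\mathbb{N}$, where $R>0$ is the constant produced in the proof of Proposition~\ref{zeros}-(ii); this is possible since $\lambda_n \ge \lambda_1 > 0$, and it is exactly the choice already made in the proof of Theorem~\ref{bu}-(ii)-(a). By Proposition~\ref{zeros}-(ii) this forces $E_{\alpha,1}(-\mathrm{i}\lambda_n T^\alpha)\ne 0$ for every $n$, so the quotients below are well defined. Next I would re-derive the minoration \eqref{minora_Mittag}. Since $|\arg(-\mathrm{i}\lambda_n T^\alpha)| = \frac{\pi}{2}$ lies in the sector where \eqref{asymptotic expansions} applies and $\Gamma(1-\alpha)>0$ remains valid for $\frac{3}{5}<\alpha<1$, the expansion gives
\begin{align*}
E_{\alpha,1}(-\mathrm{i}\lambda_n T^\alpha) = \frac{1}{\Gamma(1-\alpha)\mathrm{i}\lambda_n T^\alpha} + \mathcal{O}\!\left(\frac{1}{\lambda_n^2}\right),
\end{align*}
and because every argument $\lambda_n T^\alpha$ now exceeds the large threshold $R$, the leading term dominates uniformly in $n$. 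Combined with the nonvanishing, this yields a constant $C=C(\alpha,\lambda_1)$ (absorbing the fixed factor $T^\alpha$ and using $\lambda_n\ge\lambda_1$) such that $|E_{\alpha,1}(-\mathrm{i}\lambda_n T^\alpha)|^{-1}\le C\lambda_n$ for all $n$, which is precisely \eqref{minora_Mittag}.

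With \eqref{minora_Mittag} in hand, the remainder is the identical computation used for Theorem~\ref{thmhstab}. Starting from \eqref{eqOT} I would write $\|u(0)\|^2 = \sum_{n=1}^{\infty} |\langle u(T),\varphi_n\rangle|^2 / |E_{\alpha,1}(-\mathrm{i}\lambda_n T^\alpha)|^2$, split the summand as $|\langle u(T),\varphi_n\rangle|^{2(1-\beta)}\,|\langle u(T),\varphi_n\rangle|^{2\beta}$, apply H\"older's inequality with exponents $\frac{1}{1-\beta}$ and $\frac{1}{\beta}$, and then insert \eqref{minora_Mittag} to bound the first factor by $C^{2\beta}\|u_0\|_\varepsilon^{2(1-\beta)}$, using that $\frac{2\beta}{1-\beta}=2\varepsilon$ and $u_0\in\mathcal{U}_{\varepsilon,M}$. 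This produces \eqref{eqthmhstabT}. The only step requiring care is the uniform minoration across all indices $n$; but since the largeness of $T$ pushes every spectral argument $\lambda_n T^\alpha$ past the threshold $R$ into the asymptotic regime, no separate treatment of finitely many small eigenvalues is needed, and the estimate is obtained in one stroke.
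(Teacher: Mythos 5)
Your proposal is correct and follows essentially the same route the paper intends: the paper proves this proposition exactly by combining the large-$T$ choice $\lambda_n T^\alpha>R$ from Theorem \ref{bu}-(ii)-(a) (which restores nonvanishing and, via the asymptotic expansion \eqref{asymptotic expansions}, the reciprocal bound \eqref{minora_Mittag} uniformly in $n$, since the smallest argument $\lambda_1 T^\alpha$ already lies in the asymptotic regime) with the H\"older-inequality computation of Theorem \ref{thmhstab}. No gaps: fixing $T$ makes the infimum over $n$ of the lower bound occur at $n=1$, so the constant $C$ exists exactly as you claim.
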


We can provide a more detailed description of $T$ satisfying $E_{\alpha,1}(-\mathrm{i}\lambda_{n} T^{\alpha}) = 0$ for some $n\in \mathbb{N}$ for $\frac{3}{5}<\alpha <1$. By the holomorphy of $E_{\alpha,1}(z)$ and the asymptotics of the zeros
of the Mittag-Leffler function, we know that the set $\{ \eta >0:\; E_{\alpha,1}(-\mathrm{i}\eta) = 0\}$ is a finite set. We remark that it may be empty. Therefore, we can set
$$
\{ \eta>0:\; E_{\alpha,1}(-\mathrm{i}\eta) = 0\} =: \{ \eta_k\}_{1\le k \le N},
$$
with some $N \in \mathbb{N} \cup \{0\}$ and $\eta_1 <\cdots <\eta_N$. We adopt the convention $\{\eta_k\}_{1\le k \le N}=\varnothing$ when $N=0$.
We have
$$
\eta_k = \lambda_n T^{\alpha} \quad \Longleftrightarrow \quad T \in \left\{ \left(\frac{\eta_1}{\lambda_n}\right)^{\frac{1}{\alpha}},\ldots,
\left(\frac{\eta_N}{\lambda_n}\right)^{\frac{1}{\alpha}}\right\}
_{n\in \mathbb{N}}=: \Upsilon.
$$
Then $T\not\in \Upsilon$ if and only if the backward problem is 
well-posed. Following \cite{FY20}, we can prove:
\begin{prop}  We assume that $\Upsilon \neq \varnothing$. Then
\begin{itemize}
\item[(i)] $\Upsilon \subset (0,\infty)$ is a discrete set.
\item[(ii)] $\sup \Upsilon = \left(\frac{\eta_N}{\lambda_1}\right)^{\frac{1}{\alpha}}$.
\item[(iii)] $0$ is a unique accumulation point of $\Upsilon$.
\end{itemize}
\end{prop}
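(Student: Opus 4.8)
The plan is to dispatch the three items in the order (ii), (iii), (i), since (ii) is elementary, (iii) carries the only real content, and (i) is an immediate corollary of (iii). Throughout I would use the two structural facts available: the index set $\{\eta_k\}_{1\le k\le N}$ is finite and $\lambda_1\le\lambda_2\le\cdots\to\infty$, with the $\lambda_n$ taking infinitely many distinct values and accumulating only at $+\infty$. Note also that every element of $\Upsilon$ is strictly positive, so $0\notin\Upsilon$.

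For (ii), I would observe that $x\mapsto x^{1/\alpha}$ is strictly increasing on $(0,\infty)$, so maximizing an element of $\Upsilon$ reduces to maximizing the ratio $\eta_k/\lambda_n$ over $1\le k\le N$ and $n\in\mathbb{N}$. Since $\eta_N=\max_{1\le k\le N}\eta_k$ and $\lambda_1=\min_{n}\lambda_n$, the ratio is maximized at $(k,n)=(N,1)$, and this maximum is attained; hence
\[
\sup\Upsilon=\max\Upsilon=\left(\frac{\eta_N}{\lambda_1}\right)^{\frac{1}{\alpha}}.
\]

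For (iii), I would first check that $0$ is an accumulation point: fixing any $k$, the values $(\eta_k/\lambda_n)^{1/\alpha}\to 0$ as $n\to\infty$ because $\lambda_n\to\infty$, and since the $\lambda_n$ take infinitely many distinct values this yields infinitely many distinct elements of $\Upsilon$ converging to $0$. For uniqueness, let $c$ be any accumulation point and take distinct elements $T_j=(\eta_{k_j}/\lambda_{n_j})^{1/\alpha}\to c$. As there are only finitely many admissible indices $k$, the pigeonhole principle lets me pass to a subsequence on which $k_j\equiv k_0$ is constant. The $T_j$ remain pairwise distinct along this subsequence, so the $\lambda_{n_j}$ are pairwise distinct eigenvalues; since the eigenvalues accumulate only at $+\infty$, I get $\lambda_{n_j}\to\infty$, whence $T_j\to 0$ and therefore $c=0$. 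This shows $0$ is the unique accumulation point. I expect the combination of the pigeonhole step (exploiting finiteness of $\{\eta_k\}$) with the divergence of distinct eigenvalues to be the crux of the argument, since it is precisely what rules out a nonzero accumulation point.

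Finally, (i) follows at once: discreteness means every point of $\Upsilon$ is isolated, and a non-isolated point would be an accumulation point of $\Upsilon$. By (iii) the only accumulation point is $0$, and $0\notin\Upsilon$ because all elements are strictly positive; hence no point of $\Upsilon$ is an accumulation point, so each is isolated and $\Upsilon$ is discrete in $(0,\infty)$.
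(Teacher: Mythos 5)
Your proof is correct in all three parts. Note, however, that the paper itself offers no proof of this proposition: it simply states ``Following [FY20], we can prove:'' and leaves the argument to that reference (the same remark applies to the identical proposition for the set $\Lambda$ in the superdiffusive case). So your write-up actually supplies the details the paper delegates to the citation, and it does so by what is essentially the intended argument: (ii) is the monotonicity of $x\mapsto x^{1/\alpha}$ combined with $\eta_N=\max_k\eta_k$ and $\lambda_1=\min_n\lambda_n$, and the crux is exactly where you place it, in (iii) --- the pigeonhole extraction of a subsequence with constant zero-index $\eta_{k_0}$, after which pairwise distinct $T_j$ force pairwise distinct eigenvalues $\lambda_{n_j}$, and these can only diverge to $+\infty$, sending $T_j\to 0$. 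Your ordering (ii), (iii), (i), with discreteness deduced from uniqueness of the accumulation point together with $0\notin\Upsilon$, is clean and complete; no gaps.
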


\subsection*{The case $\nu=1$ and $1<\alpha<2$}
We briefly discuss some issues of the case $\nu=1$ for $1<\alpha<2$, as mentioned earlier in the introduction.

Let $1<\alpha<2$ and consider the following problem
\begin{equation}\label{fseqe}
\begin{cases}
\mathrm{i}\partial_t^\alpha u +Au=0, &\qquad t \in (0,T),\\
u(0)=u_0, \qquad\partial_tu(0)=u_1,
\end{cases}
\end{equation}
where the Caputo derivative $\partial_t^\alpha u$ of fractional order $\alpha\in (1,2)$ is defined by
\begin{equation}\label{cap12}
\partial_t^\alpha u(t)=\frac{1}{\Gamma(2-\alpha)} \int_0^t(t-s)^{1-\alpha} \partial_s^2 u(s)\, \d s,
\end{equation}
whenever the right-hand side is well-defined. By the Fourier method and the Laplace transform, we can obtain the formal solution to \eqref{fseqe}:
	\begin{align}\label{solution formulae}
        u(t)=\sum_{n=1}^{\infty}\left\langle u_0, \varphi_n\right\rangle E_{\alpha, 1}\left(-\mathrm{i}\lambda_n t^\alpha\right) \varphi_n + \sum_{n=1}^{\infty}\left\langle u_1, \varphi_n\right\rangle tE_{\alpha, 2}\left(-\mathrm{i}\lambda_n t^\alpha\right) \varphi_n.
	\end{align}
In this case, $|\arg(-\mathrm{i}\lambda_n t^\alpha)|=\frac{\pi}{2}$ and the adequate asymptotic expansion for the first sum is \eqref{asymptotic expansions2}:
$$E_{\alpha, 1}\left(-\mathrm{i}\lambda_n t^\alpha\right) =\frac{1}{\alpha} e^{(-\mathrm{i})^{\frac{1}{\alpha}} \lambda_n^{\frac{1}{\alpha}} t}+\frac{1}{\Gamma(1-\alpha)}\frac{1}{\mathrm{i}\lambda_n t^\alpha} + \mathcal{O}\left(\frac{1}{\lambda_n^{2} t^{2\alpha}}\right), \; n\to \infty.$$
Since $\mathrm{Re}\left((-\mathrm{i})^{\frac{1}{\alpha}} \lambda_n^{\frac{1}{\alpha}} t\right)=\cos(\frac{\pi}{2\alpha})\lambda_n^{\frac{1}{\alpha}} t$ and $1<\alpha<2$, we have $|E_{\alpha, 1}\left(-\mathrm{i}\lambda_n t^\alpha\right)| \to \infty$ as $n\to \infty$. This suggests that the problem \eqref{fseqe} in $H$ is non-physical for $1 < \alpha < 2$.

\section{The case $\nu=\alpha$}\label{sec4}
\subsection{Well-posedness of the forward problem}
\subsubsection{Subdiffusive case}
Let $0<\alpha<1$ and consider the following problem
\begin{equation}\label{fseq2}
\begin{cases}
\mathrm{i}^{\alpha}\partial_t^\alpha u +Au=0, &\qquad t \in (0,T),\\
u(0)=u_0,
\end{cases}
\end{equation}
where the Caputo derivative is defined by \eqref{cap01}. The Fourier approach and the Laplace transform in $t$ lead to
	\begin{align}\label{solution formula2}
		u(t)=\sum_{n=1}^{\infty}\left\langle u_0, \varphi_n\right\rangle E_{\alpha, 1}\left(\mathrm{i}^{-\alpha}\lambda_n t^\alpha\right) \varphi_n,
	\end{align}
which is the formal expression of the solution to \eqref{fseq2}. Using the derivative of the Mittag-Leffler function \eqref{derivative1_ML}, we formally obtain:
\begin{align}\label{derivative}
	\partial_t u(t)=\sum_{n=1}^{\infty}\mathrm{i}^{-\alpha}\lambda_n\left\langle u_0, \varphi_n\right\rangle t^{\alpha-1}E_{\alpha, \alpha}\left(\mathrm{i}^{-\alpha}\lambda_n t^\alpha\right) \varphi_n.
\end{align}
\begin{lem} \label{regularity2}
	Let $s\geq 0$ and $u_0\in H_s$. The solution \eqref{solution formula2} satisfies the following regularity properties:
	\begin{itemize}
		\item[(i)] $u\in C([0,T];H_s)$ and $\|u\|_{C([0,T];H_s)}\lesssim \|u_0\|_s$;
		\item[(ii)] $u\in W^{1,1}\left(0,T; H_{s-\frac{1}{\alpha}}\right)$.
	\end{itemize}
\end{lem}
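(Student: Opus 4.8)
The plan is to prove both items by direct estimation of the eigenvector series \eqref{solution formula2} and of its formal termwise time derivative \eqref{derivative}, drawing exclusively on the uniform Mittag-Leffler bounds collected in Corollary \ref{est12}. Part (i) is an almost verbatim repetition of Lemma \ref{regularity1}(i), with the bound \eqref{es1} replacing \eqref{es0}: since $|E_{\alpha,1}(\mathrm{i}^{-\alpha}\lambda_n t^\alpha)|\le C_3$ uniformly in $t\ge 0$ and $n$, Parseval gives
\[
\|u(t)\|_s^2=\sum_{n=1}^\infty \lambda_n^{2s}|\langle u_0,\varphi_n\rangle|^2\,|E_{\alpha,1}(\mathrm{i}^{-\alpha}\lambda_n t^\alpha)|^2\le C_3^2\|u_0\|_s^2 ,
\]
and continuity of $t\mapsto u(t)$ in $H_s$ follows from the continuity of each Mittag-Leffler term together with dominated convergence for the series, the summable majorant being $4C_3^2\lambda_n^{2s}|\langle u_0,\varphi_n\rangle|^2$.

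The substance is in part (ii), where the key is a pointwise estimate of $\|\partial_t u(t)\|_{s-1/\alpha}$ that is integrable near $t=0$. Starting from \eqref{derivative} and using $|\mathrm{i}^{-\alpha}|=1$, I would write
\[
\|\partial_t u(t)\|_{s-1/\alpha}^2=t^{2(\alpha-1)}\sum_{n=1}^\infty \lambda_n^{2(s-1/\alpha)+2}|\langle u_0,\varphi_n\rangle|^2\,|E_{\alpha,\alpha}(\mathrm{i}^{-\alpha}\lambda_n t^\alpha)|^2 ,
\]
then insert \eqref{es3}, namely $|E_{\alpha,\alpha}(\mathrm{i}^{-\alpha}\lambda_n t^\alpha)|\le C_3(1+\lambda_n t^\alpha)^{(1-\alpha)/\alpha}$, and apply the elementary bound $(1+x)^p\lesssim 1+x^p$ with $p=2(1-\alpha)/\alpha>0$ to split the sum into two pieces.

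The decisive point is that the target index $s-\tfrac1\alpha$ is tuned so that the exponents balance exactly. In the leading piece the powers of $\lambda_n$ collapse, since $2(s-1/\alpha)+2+2(1-\alpha)/\alpha=2s$, while the powers of $t$ cancel, $2(\alpha-1)+2(1-\alpha)=0$, producing precisely $\|u_0\|_s^2$; in the remaining piece one has $\lambda_n^{2(s-1/\alpha)+2}=\lambda_n^{2s}\lambda_n^{-2(1-\alpha)/\alpha}\le \lambda_1^{-2(1-\alpha)/\alpha}\lambda_n^{2s}$ by $\lambda_n\ge\lambda_1>0$, again controlled by $\|u_0\|_s^2$. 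This yields
\[
\|\partial_t u(t)\|_{s-1/\alpha}\lesssim (t^{\alpha-1}+1)\|u_0\|_s ,
\]
and since $\alpha>0$ the singularity $t^{\alpha-1}$ is integrable, so $\int_0^T\|\partial_t u(t)\|_{s-1/\alpha}\,\d t\lesssim (T^\alpha/\alpha+T)\|u_0\|_s<\infty$.

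It remains to upgrade this $L^1$ bound to the $W^{1,1}$ conclusion by identifying \eqref{derivative} as the genuine weak derivative of $u$. The cleanest route is to argue via the partial sums $u^N$: each $u^N$ is absolutely continuous on $[0,T]$ valued in $H_s$ (the $t^{\alpha-1}$ derivative singularity being integrable, and $E_{\alpha,\alpha}$ being bounded on $[0,T]$), with classical derivative equal to the truncation of \eqref{derivative}; applying the estimates above to the tails shows $u^N\to u$ in $C([0,T];H_{s-1/\alpha})$ and $\partial_t u^N\to \partial_t u$ in $L^1(0,T;H_{s-1/\alpha})$, whence the closedness of the weak-derivative operator under $L^1$ convergence identifies \eqref{derivative} as $\partial_t u$. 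I expect this final identification, rather than the estimates, to be the only genuinely technical step, since the exact balance of the $\lambda_n$- and $t$-exponents makes the a priori bounds essentially automatic.
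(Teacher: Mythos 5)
Your proof is correct and follows essentially the same route as the paper: part (i) via the uniform bound \eqref{es1}, and part (ii) via \eqref{es3} with exactly the same exponent balance (your splitting $(1+x)^p\lesssim 1+x^p$ versus the paper's factoring out $(\lambda_n t^\alpha)^p$ and bounding the ratio $\bigl(\tfrac{1+\lambda_n t^\alpha}{\lambda_n t^\alpha}\bigr)^p$ are trivially equivalent), arriving at the same bound $\|\partial_t u(t)\|_{s-\frac{1}{\alpha}}\lesssim (t^{\alpha-1}+1)\|u_0\|_s$, integrable since $\alpha>0$. Your closing step identifying the termwise series \eqref{derivative} as the genuine weak derivative through partial sums and closedness under $L^1$ convergence is a rigor point the paper leaves implicit, but it is a refinement of the same argument rather than a different approach.
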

\begin{proof}
	Using formula \eqref{solution formula2} and $|E_{\alpha, 1}\left(\mathrm{i}^{-\alpha}\lambda_n t^\alpha\right)|\leq C_3$ for all $t\ge 0$ (see \eqref{es1}), we obtain
	\begin{align}
		\|u(t)\|^2_s&=\sum_{n=1}^{\infty}\lambda_{n}^{2s}|\left\langle u_0, \varphi_n\right\rangle|^2 |E_{\alpha, 1}\left(\mathrm{i}^{-\alpha}\lambda_n t^\alpha\right)|^2 \nonumber\\
		&\lesssim\sum_{n=1}^{\infty}\lambda_{n}^{2s}|\left\langle u_0, \varphi_n\right\rangle|^2 \nonumber\\
		&=\|u_0\|^{2}_s.\nonumber
	\end{align}
	Hence $u\in C([0,T];H_s)$ and $\|u\|_{C([0,T];H_s)}\lesssim \|u_0\|_s$. Since $s-\frac{1}{\alpha}<s$, then $u\in C\left([0,T]; H_{s-\frac{1}{\alpha}}\right)$. Using formulas \eqref{derivative} and \eqref{es3}, we have
	\begin{align}
		\|\partial_t u(t)\|^2_{s-\frac{1}{\alpha}}&\leq t^{2(\alpha-1)}\sum_{n=1}^{\infty}\lambda_{n}^{2s+2-\frac{2}{\alpha}}|\langle u_0,\varphi_{n}\rangle|^2|E_{\alpha, \alpha}\left(\mathrm{i}^{-\alpha}\lambda_n t^\alpha\right)|^2  \nonumber\\
		&\lesssim  t^{2(\alpha-1)}\sum_{n=1}^{\infty}\lambda_{n}^{2s+2-\frac{2}{\alpha}}|\langle u_0,\varphi_{n}\rangle|^2\left(1+\lambda_n t^\alpha\right)^{\frac{2(1-\alpha)}{\alpha}}  \nonumber\\
		&\lesssim  \sum_{n=1}^{\infty}\lambda_{n}^{2s}|\langle u_0,\varphi_{n}\rangle|^2\left(\frac{1+\lambda_n t^\alpha}{\lambda_n t^\alpha}\right)^{\frac{2(1-\alpha)}{\alpha}}  \nonumber\\
		&\lesssim  \left(\frac{t^{-\alpha}}{\lambda_1}+1\right)^{\frac{2(1-\alpha)}{\alpha}}\|u_0\|^2_s \nonumber\\
        &\lesssim  \left(t^{2(\alpha-1)}+1\right)\|u_0\|^2_s. \nonumber
	\end{align}
	Then $\partial_t u\in L^{1}\left(0,T; H_{s-\frac{1}{\alpha}}\right)$ due to $\alpha>0$ and therefore $u\in W^{1,1}\left(0,T; H_{s-\frac{1}{\alpha}}\right)$.
\end{proof}
When $u_0 \in H$ is given, the fractional derivative of \eqref{solution formula2} is defined as an element of $L^1\left(0, T; H_{-\frac{1}{\alpha}}\right)$, due to the properties of the convolution product. Consequently, this fractional derivative is well-defined pointwise in $H_{-\frac{1}{\alpha}}$, which motivates the following definition.
\begin{defn} \label{solution_def2}
	We say that $u$ is a weak solution to \eqref{fseq2} if the following conditions are fulfilled:
	\begin{itemize}
		\item[(i)] $u\in C([0,T];H)$;
		\item[(ii)] $\partial_t^\alpha u(t),\, Au(t)\in H_{-\frac{1}{\alpha}}$ for almost all $t\in (0,T)$ and \eqref{fseq2}$_{1}$ holds in $H_{-\frac{1}{\alpha}}$ for almost all $t\in (0,T)$;
		\item[(iii)] The initial datum $u(0)=u_0$ is fulfilled as follows
		$$\lim_{t\to 0^{+}}\|u(t)-u_0\|=0 \quad \text{ for }u_0\in H.$$
	\end{itemize}
\end{defn} 
We now state the existence and uniqueness result for system \eqref{fseq2}.
\begin{theorem} \label{Thm_well2}
	Let $0<\alpha < 1$ and $u_0\in H_s$ for $s\geq 0$. There exists a unique weak solution $u \in C([0, T] ; H_s)$ to \eqref{fseq2} such that 
		\begin{align}
			&\|u\|_{C([0,T];H_s)}\lesssim \|u_0\|_s.
		\end{align}
	Moreover, the unique weak solution to \eqref{fseq2} is given by the formula \eqref{solution formula2}.
\end{theorem}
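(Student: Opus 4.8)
The plan is to follow the structure of the proof of Theorem~\ref{Thm_well1}, treating uniqueness and existence separately, but with the essential difference that \eqref{fseq2}$_1$ can now only be interpreted in the weaker space $H_{-1/\alpha}$ rather than in $H$.

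For uniqueness, I would expand any weak solution as $u(t)=\sum_{n}u_n(t)\varphi_n$ and $u_0=\sum_n\langle u_0,\varphi_n\rangle\varphi_n$. Since $A\varphi_n=-\lambda_n\varphi_n$, condition~(ii) of Definition~\ref{solution_def2} forces each coefficient to solve the scalar fractional ODE $\partial_t^\alpha u_n(t)=\mathrm{i}^{-\alpha}\lambda_n u_n(t)$ with $u_n(0)=\langle u_0,\varphi_n\rangle$. The Laplace transform identifies its unique solution $u_n(t)=\langle u_0,\varphi_n\rangle E_{\alpha,1}(\mathrm{i}^{-\alpha}\lambda_n t^\alpha)$, which reconstitutes \eqref{solution formula2} and hence yields uniqueness.

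For existence, I would verify that \eqref{solution formula2} fulfils the three requirements of Definition~\ref{solution_def2}. Conditions~(i) and~(iii)---that is, $u\in C([0,T];H_s)\subset C([0,T];H)$ with the stated bound, and $\|u(t)-u_0\|\to 0$ as $t\to 0^+$---follow from Lemma~\ref{regularity2}(i) together with dominated convergence, using the uniform bound \eqref{es1}. For condition~(ii) I would differentiate termwise, justified by the $W^{1,1}$-regularity in Lemma~\ref{regularity2}(ii): relation \eqref{derivative0_ML} with $\omega=\mathrm{i}^{-\alpha}\lambda_n$ gives $\partial_t^\alpha u(t)=\sum_n \mathrm{i}^{-\alpha}\lambda_n\langle u_0,\varphi_n\rangle E_{\alpha,1}(\mathrm{i}^{-\alpha}\lambda_n t^\alpha)\varphi_n$, while $Au(t)=-\sum_n\lambda_n\langle u_0,\varphi_n\rangle E_{\alpha,1}(\mathrm{i}^{-\alpha}\lambda_n t^\alpha)\varphi_n$; since $\mathrm{i}^\alpha\mathrm{i}^{-\alpha}=1$, the two cancel and \eqref{fseq2}$_1$ holds.

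The main obstacle is that the bound \eqref{es1} provides only uniform boundedness of $E_{\alpha,1}(\mathrm{i}^{-\alpha}\lambda_n t^\alpha)$ in $n$, with no decay in $\lambda_n$---in sharp contrast with the $\nu=1$ case, where the decay \eqref{es0} allowed the solution to gain a full spatial derivative. Hence the series for $Au(t)$ need not converge in $H_s$, and one must measure it in a weaker space. The key observation is that
$$
\|Au(t)\|_{s-1/\alpha}^2\lesssim\sum_{n=1}^\infty\lambda_n^{2-2/\alpha}\,\lambda_n^{2s}|\langle u_0,\varphi_n\rangle|^2\le\lambda_1^{2-2/\alpha}\|u_0\|_s^2,
$$
which is finite precisely because $0<\alpha<1$ makes $2-2/\alpha<0$, so that $\lambda_n^{2-2/\alpha}\le\lambda_1^{2-2/\alpha}$. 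This confirms $Au(t),\,\partial_t^\alpha u(t)\in H_{s-1/\alpha}\subset H_{-1/\alpha}$, so that \eqref{fseq2}$_1$ indeed holds in $H_{-1/\alpha}$ for every $t\in(0,T]$, completing the verification.
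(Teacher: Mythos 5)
Your proposal is correct and follows essentially the same route as the paper's proof: uniqueness via eigenvector expansion reducing to the scalar Caputo ODEs $\partial_t^\alpha u_n = \mathrm{i}^{-\alpha}\lambda_n u_n$ solved by Laplace transform, and existence by verifying Definition \ref{solution_def2} for the series \eqref{solution formula2} using Lemma \ref{regularity2} and the uniform bound \eqref{es1}, with the equation holding only in the weaker space $H_{-\frac{1}{\alpha}}$. Your explicit estimate $\|Au(t)\|_{s-1/\alpha}^2 \lesssim \lambda_1^{2-2/\alpha}\|u_0\|_s^2$, valid because $2-\frac{2}{\alpha}<0$, is exactly the paper's step $\|\partial_t^\alpha u(t)\|_{-\frac{1}{\alpha}} = \|Au(t)\|_{-\frac{1}{\alpha}} = \|u(t)\|_{1-\frac{1}{\alpha}} \lesssim \|u_0\|$.
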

\begin{proof}
	We begin by proving the uniqueness of the weak solution to \eqref{fseq}. For that, let $u_0\in H$ and assume that $u$ is a weak solution to \eqref{fseq2}. We proceed to decompose 
	$u(t)$ and $u_0$ as follows:
	\begin{align*}
		u(t)&=\sum_{n=1}^{\infty}u_n(t)\varphi_n\quad t\in [0,T],\\
		u_0&=\sum_{n=1}^{\infty}\left\langle u_0, \varphi_n\right\rangle\varphi_n.
	\end{align*}
	Since $\partial_t^{\alpha}u$ and $Au$ are defined pointwise in $H_{-\frac{1}{\alpha}}$.
	Then, by taking the duality pairing $\langle\cdot,\cdot\rangle_{-\frac{1}{\alpha},\frac{1}{\alpha}}$ with $\varphi_n$, we have
	\begin{align*}
		\langle \partial_t^\alpha u(t), \varphi_{n} \rangle_{-\frac{1}{\alpha},\frac{1}{\alpha}}&=\partial_t^\alpha u_n(t),\\
		\langle Au(t), \varphi_{n} \rangle _{-\frac{1}{\alpha},\frac{1}{\alpha}} &=-\lambda_{n}u_n(t)
	\end{align*}
	for almost all $t\in (0,T)$.
	Hence, $u$ is a weak solution to \eqref{fseq2} 
	if and only if each $u_n$ satisfies the following fractional ODE:
	\begin{equation}\label{ode2}
		\begin{cases}
			\partial_t^\alpha u_n(t) - \mathrm{i}^{-\alpha} \lambda_{n}u_n(t) = 0, &\qquad t \in (0,T),\\
			u_n(0)=\left\langle u_0, \varphi_n\right\rangle.
		\end{cases}
	\end{equation}
	Using the Laplace transform, the unique solution $u_n$ of \eqref{ode2} is given by
	\begin{align*}
		u_n(t)=\left\langle u_0, \varphi_n\right\rangle E_{\alpha,1}(\mathrm{i}^{-\alpha} \lambda_{n} t^{\alpha}).
	\end{align*}
	Consequently, we obtain the formula \eqref{solution formula2}, which guarantees the uniqueness of the solution when it exists. We now show that formula \eqref{solution formula2} indeed defines a weak solution to \eqref{fseq2}. We have
	\begin{align}
		&\|u(t)\|\lesssim \|u_0\|, \nonumber\\
		&\|\partial_{t}^{\alpha}u(t)\|_{-\frac{1}{\alpha}}=\|Au(t)\|_{-\frac{1}{\alpha}}=\|u(t)\|_{1-\frac{1}{\alpha}}\lesssim\|u_0\|<\infty \nonumber,
	\end{align} 
	and 
	\begin{align}
		\|u(t)-u_0\|^2=\sum_{n=1}^{\infty}|\left\langle u_0, \varphi_n\right\rangle|^2 |E_{\alpha, 1}\left(\mathrm{i}^{-\alpha}\lambda_n t^\alpha\right)-1|^2\lesssim \|u_0\|^2. \nonumber
	\end{align}
	By termwise differentiation, we obtain
	\begin{align*}
		& u(t)=\sum_{n=1}^{\infty}\left\langle u_0, \varphi_n\right\rangle E_{\alpha, 1}\left(\mathrm{i}^{-\alpha}\lambda_n t^\alpha\right) \varphi_n\in H \qquad \forall t\in [0,T],\\
		&\partial_t^\alpha u(t)=\mathrm{i}^{-\alpha}\sum_{n=1}^{\infty}\lambda_{n}\langle u_0,\varphi_{n}\rangle E_{\alpha,1}(\mathrm{i}^{-\alpha}\lambda_{n}t^{\alpha})\varphi_n\in H_{-\frac{1}{\alpha}}\qquad \forall t\in (0,T], \\
		&Au(t)=-\sum_{n=1}^{\infty}\lambda_{n}\langle u_0,\varphi_{n}\rangle E_{\alpha,1}(\mathrm{i}^{-\alpha}\lambda_{n}t^{\alpha})\varphi_n\in H_{-\frac{1}{\alpha}}\qquad \forall t\in (0,T],\\
		&\lim_{t\to 0^{+}}\|u(t)-u_0\|=0.
	\end{align*}
	Consequently, formula \eqref{solution formula2} gives the unique weak solution to \eqref{fseq2}.
\end{proof}

\subsubsection{Superdiffusive case}
Let $1<\alpha<2$ and consider the following problem
\begin{equation}\label{fseq3}
\begin{cases}
\mathrm{i}^{\alpha}\partial_t^\alpha u +Au=0, &\qquad t \in (0,T),\\
u(0)=u_0, \qquad\partial_tu(0)=u_1,
\end{cases}
\end{equation}
where the Caputo derivative is defined by \eqref{cap12}. The Fourier approach and the Laplace transform in $t$ lead to
	\begin{align}\label{solution formula3}
        u(t)=\sum_{n=1}^{\infty}\left\langle u_0, \varphi_n\right\rangle E_{\alpha, 1}\left(\mathrm{i}^{-\alpha}\lambda_n t^\alpha\right) \varphi_n + \sum_{n=1}^{\infty}\left\langle u_1, \varphi_n\right\rangle tE_{\alpha, 2}\left(\mathrm{i}^{-\alpha}\lambda_n t^\alpha\right) \varphi_n,
	\end{align}
which is the formal expression of the solution to \eqref{fseq3}. In \eqref{solution formula3}, the duality bracket can be taken depending on the regularity of $u_0$ and $u_1$.
Using the derivatives of the Mittag-Leffler function \eqref{derivative1_ML}-\eqref{derivative3_ML}, we formally obtain:
\begin{align}
	\partial_t u(t)=&\sum_{n=1}^{\infty}\mathrm{i}^{-\alpha}\lambda_n t^{\alpha-1}\left\langle u_0, \varphi_n\right\rangle E_{\alpha, \alpha}\left(\mathrm{i}^{-\alpha}\lambda_n t^\alpha\right) \varphi_n+ \sum_{n=1}^{\infty}\left\langle u_1, \varphi_n\right\rangle E_{\alpha, 1}\left(\mathrm{i}^{-\alpha}\lambda_n t^\alpha\right) \varphi_n, \label{D2}\\
	\partial_t^2 u(t)=&\sum_{n=1}^{\infty}\mathrm{i}^{-\alpha}\lambda_n t^{\alpha-2}\left\langle u_0, \varphi_n\right\rangle E_{\alpha, \alpha-1}\left(\mathrm{i}^{-\alpha}\lambda_n t^\alpha\right) \varphi_n \nonumber\\
    &+\sum_{n=1}^{\infty}\mathrm{i}^{-\alpha}\lambda_n t^{\alpha-1}\left\langle u_1, \varphi_n\right\rangle E_{\alpha, \alpha}\left(\mathrm{i}^{-\alpha}\lambda_n t^\alpha\right) \varphi_n. \label{D3}
\end{align}
\begin{lem} \label{regularity3}
	Let $s\geq 0$, $u_0\in H_s$ and $u_1\in H_{s-\frac{1}{\alpha}}$. The solution \eqref{solution formula3} satisfies the following regularity properties:
	\begin{itemize}
		\item[(i)] $u\in C([0,T];H_s)$ and $\|u\|_{C([0,T];H_s)}\lesssim \|u_0\|_s + \|u_1\|_{s-\frac{1}{\alpha}}$;
		\item[(ii)] $u\in W^{2,1}\left(0,T; H_{s-\frac{2}{\alpha}}\right)$.
	\end{itemize}
\end{lem}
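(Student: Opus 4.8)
The plan is to prove the two regularity claims by substituting the eigenvector expansions \eqref{solution formula3}, \eqref{D2}, \eqref{D3} into the norms $\|\cdot\|_{s}$, $\|\cdot\|_{s-\frac{1}{\alpha}}$, $\|\cdot\|_{s-\frac{2}{\alpha}}$ and bounding each Mittag-Leffler factor by the corresponding estimate from Corollary \ref{est12}, exactly in the spirit of the proofs of Lemmas \ref{regularity1} and \ref{regularity2}. The only genuinely new feature compared with the subdiffusive case is the presence of the second datum $u_1$ (carried by the functions $tE_{\alpha,2}$ and its derivatives) and the more singular growth exponents of $E_{\alpha,\alpha}$ and $E_{\alpha,\alpha-1}$ in the regime $1<\alpha<2$.

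For part (i) I would estimate the two sums in \eqref{solution formula3} separately. The first sum is controlled by \eqref{es1}, which gives $|E_{\alpha,1}(\mathrm{i}^{-\alpha}\lambda_n t^\alpha)|\le C_3$, yielding $\lesssim\|u_0\|_s^2$ after summing $\lambda_n^{2s}|\langle u_0,\varphi_n\rangle|^2$. For the second sum, the factor is $t\,E_{\alpha,2}(\mathrm{i}^{-\alpha}\lambda_n t^\alpha)$; by \eqref{es2} one has $|E_{\alpha,2}(\mathrm{i}^{-\alpha}\lambda_n t^\alpha)|\lesssim(1+\lambda_n t^\alpha)^{-\min(1,1/\alpha)}=(1+\lambda_n t^\alpha)^{-1/\alpha}$ since $\alpha>1$. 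Then
\begin{align*}
t^2\lambda_n^{2s}|E_{\alpha,2}(\mathrm{i}^{-\alpha}\lambda_n t^\alpha)|^2
\lesssim t^2\,\frac{\lambda_n^{2s}}{(1+\lambda_n t^\alpha)^{2/\alpha}}
\lesssim \lambda_n^{2s-\frac{2}{\alpha}},
\end{align*}
where the last bound uses $t^2(1+\lambda_n t^\alpha)^{-2/\alpha}\lesssim t^2(\lambda_n t^\alpha)^{-2/\alpha}=\lambda_n^{-2/\alpha}$. Summing against $|\langle u_1,\varphi_n\rangle|^2$ produces $\|u_1\|_{s-\frac{1}{\alpha}}^2$, which is exactly the norm in which the hypothesis places $u_1$. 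Uniform continuity in $t$ follows from dominated convergence on each dyadic tail, as in the earlier lemmas, giving $u\in C([0,T];H_s)$ with the stated bound.

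For part (ii) the task is to show that $\partial_t^2 u$, as written in \eqref{D3}, lies in $L^1(0,T;H_{s-\frac{2}{\alpha}})$; together with the analogous (easier) bound for $\partial_t u$ in the same space this yields $u\in W^{2,1}(0,T;H_{s-\frac{2}{\alpha}})$. The main obstacle, and where care is needed, is the first sum in \eqref{D3}: it carries the factor $t^{\alpha-2}E_{\alpha,\alpha-1}(\mathrm{i}^{-\alpha}\lambda_n t^\alpha)$, and by \eqref{es4} the Mittag-Leffler factor grows like $(1+\lambda_n t^\alpha)^{(2-\alpha)/\alpha}$, the most singular of all the estimates in Corollary \ref{est12}. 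I would bound the $H_{s-\frac{2}{\alpha}}$-norm of this term by
\begin{align*}
t^{2(\alpha-2)}\lambda_n^{2s+2-\frac{4}{\alpha}}|\langle u_0,\varphi_n\rangle|^2(1+\lambda_n t^\alpha)^{\frac{2(2-\alpha)}{\alpha}}
\lesssim \lambda_n^{2s}|\langle u_0,\varphi_n\rangle|^2\Bigl(t^{\alpha-2}+t^{2(\alpha-2)}\lambda_1^{-\frac{2-\alpha}{\alpha}}\cdots\Bigr),
\end{align*}
the point being that after factoring out $\lambda_n^{2s}$ the residual $\lambda_n$-power recombines with $(1+\lambda_n t^\alpha)^{(2-\alpha)/\alpha}/(\lambda_n t^\alpha)^{\text{(appropriate power)}}$ to leave only a bounded factor times a negative power of $\lambda_1$, while the $t$-dependence collapses to a term of order $t^{\alpha-2}$. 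Since $\alpha-2>-1$ for $\alpha\in(1,2)$, this is $L^1(0,T)$ in time, which is exactly why the exponent $s-\frac{2}{\alpha}$ is chosen: it is the loss needed to absorb the growth $(2-\alpha)/\alpha$ and convert the singular $t$-power into an integrable one. The second sum of \eqref{D3}, and both terms of \eqref{D2}, involve $E_{\alpha,\alpha}$ or $E_{\alpha,1}$ with milder exponents handled via \eqref{es3} and \eqref{es1}, and their $L^1$-in-time bounds follow by the same algebra with strictly better powers of $t$. Collecting the contributions of $u_0$ and $u_1$ and integrating in $t$ completes the proof.
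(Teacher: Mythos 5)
Your proposal is correct and takes essentially the same route as the paper's own proof: termwise estimation of the expansions \eqref{solution formula3}, \eqref{D2}, \eqref{D3} using the bounds \eqref{es1}--\eqref{es4}, the same power-counting that trades $t^{2}(1+\lambda_n t^\alpha)^{-2/\alpha}$ for $\lambda_n^{-2/\alpha}$ in part (i) and rewrites the singular terms as $\lambda_n^{2s}\bigl(\tfrac{1+\lambda_n t^\alpha}{\lambda_n t^\alpha}\bigr)^{2(2-\alpha)/\alpha}$ in part (ii), and the final integrability argument from $\alpha-2>-1$. The paper likewise reduces the $u_1$-part of $\partial_t^2 u$ to the already-treated $u_0$-part of $\partial_t u$, so there is no substantive difference in approach.
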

\begin{proof}
Using formula \eqref{solution formula3}, $|E_{\alpha, 1}\left(\mathrm{i}^{-\alpha}\lambda_n t^\alpha\right)|\leq C_3$ and $|E_{\alpha, 2}\left(\mathrm{i}^{-\alpha}\lambda_n t^\alpha\right)|\leq \frac{C}{(1+\lambda_{n}t^{\alpha})^\frac{1}{\alpha}}$ (see Corollary \ref{est12}), we obtain
	\begin{align}
		\|u(t)\|^2_s &\lesssim \sum_{n=1}^{\infty}\lambda_{n}^{2s}|\left\langle u_0, \varphi_n\right\rangle|^2 |E_{\alpha, 1}\left(\mathrm{i}^{-\alpha}\lambda_n t^\alpha\right)|^2 + \sum_{n=1}^{\infty}\lambda_{n}^{2s}t^2|\left\langle u_1, \varphi_n\right\rangle|^2 |E_{\alpha, 2}\left(\mathrm{i}^{-\alpha}\lambda_n t^\alpha\right)|^2 \nonumber\\
		&\lesssim \sum_{n=1}^{\infty}\lambda_{n}^{2s}|\left\langle u_0, \varphi_n\right\rangle|^2  + \sum_{n=1}^{\infty}\frac{\lambda_{n}^{2s}t^2}{(1+\lambda_{n}t^{\alpha})^\frac{2}{\alpha}}|\left\langle u_1, \varphi_n\right\rangle|^2  \nonumber\\
		&\lesssim \|u_0\|_s^2  + \sum_{n=1}^{\infty}\lambda_{n}^{2s-\frac{2}{\alpha}}|\left\langle u_1, \varphi_n\right\rangle|^2  \nonumber\\
		&\lesssim \|u_0\|_s^2  + \|u_1\|^2_{s-\frac{1}{\alpha}} \nonumber.
	\end{align}
Hence $u\in C([0,T];H_s)$ and $\|u\|_{C([0,T];H_s)}\lesssim \|u_0\|_s + \|u_1\|_{s-\frac{1}{\alpha}}$. For the regularity of the first derivative, using formula \eqref{D2}, $|E_{\alpha, \alpha}\left(\mathrm{i}^{-\alpha}\lambda_n t^\alpha\right)|\leq C_3\left(1+\lambda_n t^\alpha\right)^{\frac{1-\alpha}{\alpha}}$ and $|E_{\alpha, 1}\left(\mathrm{i}^{-\alpha}\lambda_n t^\alpha\right)|\leq C_3$, we have
	\begin{align}
		\|\partial_t u(t)\|^2_{s-\frac{1}{\alpha}}&\lesssim \sum_{n=1}^{\infty}\lambda_{n}^{2s-\frac{2}{\alpha}+2}t^{2(\alpha-1)}|\langle u_0,\varphi_{n}\rangle|^2|E_{\alpha, \alpha}\left(\mathrm{i}^{-\alpha}\lambda_n t^\alpha\right)|^2 \nonumber\\
        & \quad + \sum_{n=1}^{\infty}\lambda_{n}^{2s-\frac{2}{\alpha}}|\langle u_1,\varphi_{n}\rangle|^2|E_{\alpha, 1}\left(\mathrm{i}^{-\alpha}\lambda_n t^\alpha\right)|^2  \nonumber\\
		&\lesssim \sum_{n=1}^{\infty}\lambda_{n}^{2s-\frac{2}{\alpha}+2}t^{2(\alpha-1)}|\langle u_0,\varphi_{n}\rangle|^2\left(1+\lambda_n t^\alpha\right)^{\frac{2(1-\alpha)}{\alpha}}+ \sum_{n=1}^{\infty}\lambda_{n}^{2s-\frac{2}{\alpha}}|\langle u_1,\varphi_{n}\rangle|^2  \nonumber\\
		&\lesssim \sum_{n=1}^{\infty}\lambda_{n}^{2s}|\langle u_0,\varphi_{n}\rangle|^2\left(\frac{1+\lambda_n t^\alpha}{\lambda_n t^\alpha}\right)^{\frac{2(1-\alpha)}{\alpha}}+\|u_1\|_{s-\frac{1}{\alpha}}^2. \nonumber
	\end{align}
	Since $\frac{1-\alpha}{\alpha}<0$, then 
	\begin{align}
		\|\partial_t u(t)\|^2_{s} &\lesssim\|u_0\|_{s}^2+\|u_1\|_{s-\frac{1}{\alpha}}^2. \nonumber
	\end{align}
	For the regularity of the second derivative, note that the second sum in $\partial_t^2 u(t)$ (see \eqref{D3}) is similar to the first sum in $\partial_t u(t)$. Using the estimate \eqref{es4}, we have
	\begin{align}
		\|\partial_t^2 u(t)\|^2_{s-\frac{2}{\alpha}}&\lesssim \sum_{n=1}^{\infty}\lambda_{n}^{2s-\frac{4}{\alpha}+2}t^{2(\alpha-2)}|\langle u_0,\varphi_{n}\rangle|^2|E_{\alpha, \alpha-1}\left(\mathrm{i}^{-\alpha}\lambda_n t^\alpha\right)|^2+\|u_1\|_{s-\frac{1}{\alpha}}^2  \nonumber\\
		&\lesssim \sum_{n=1}^{\infty}\lambda_{n}^{2s-\frac{4}{\alpha}+2}t^{2(\alpha-2)}|\langle u_0,\varphi_{n}\rangle|^2\left(1+\lambda_n t^\alpha\right)^{\frac{2(2-\alpha)}{\alpha}}+\|u_1\|_{s-\frac{1}{\alpha}}^2  \nonumber\\
		&\lesssim \sum_{n=1}^{\infty}\lambda_{n}^{2s}|\langle u_0,\varphi_{n}\rangle|^2\left(\frac{1+\lambda_n t^\alpha}{\lambda_n t^\alpha}\right)^{\frac{2(2-\alpha)}{\alpha}}+\|u_1\|_{s-\frac{1}{\alpha}}^2.  \nonumber
	\end{align}
	Since $\left(\frac{1+\lambda_n t^\alpha}{\lambda_n t^\alpha}\right)^{\frac{2(2-\alpha)}{\alpha}}\leq \left(\frac{t^{-\alpha}}{\lambda_{1}}+1\right)^{\frac{2(2-\alpha)}{\alpha}}$, then 
	\begin{align}
		\|\partial_t^2 u(t)\|^2_{s-\frac{2}{\alpha}} &\lesssim \left(\frac{t^{-\alpha}}{\lambda_{1}}+1\right)^{\frac{2(2-\alpha)}{\alpha}}\|u_0\|_{s}^2+\|u_1\|_{s-\frac{1}{\alpha}}^2\nonumber\\
		&\lesssim \left(t^{2(\alpha-2)}+1\right)\|u_0\|_{s}^2+\|u_1\|_{s-\frac{1}{\alpha}}^2.  \nonumber
	\end{align}
	Since $\alpha>1$, $\partial_t^2 u\in L^1\left(0,T; H_{s-\frac{2}{\alpha}}\right)$ and therefore $u\in W^{2,1}\left(0,T; H_{s-\frac{2}{\alpha}}\right)$.
\end{proof}
When $u_0 \in H$ is given, $\partial_t^2 u\in L^1\left(0,T; H_{-\frac{2}{\alpha}}\right)$ and then the fractional derivative of \eqref{solution formula3} is defined as an element of $L^1\left(0, T; H_{-\frac{2}{\alpha}}\right)$, due to the properties of the convolution product. Consequently, this fractional derivative is well-defined pointwise in $H_{-\frac{2}{\alpha}}$, which motivates the following definition.
\begin{defn} \label{solution_def3}
	We say that $u$ is a weak solution to \eqref{fseq3} if the following conditions are fulfilled:
	\begin{itemize}
		\item[(i)] $u\in C([0,T];H)$;
		\item[(ii)] $\partial_t^\alpha u(t),\, Au(t)\in H_{-\frac{2}{\alpha}}$ for almost all $t\in (0,T)$ and \eqref{fseq3}$_{1}$ holds in $H_{-\frac{2}{\alpha}}$ for almost all $t\in (0,T)$;
		\item[(iii)] The initial data $u(0)=u_0$ and $\partial_t u(0)=u_1$ are fulfilled as follows
		$$\lim_{t\to 0^{+}}\|u(t)-u_0\|=0\quad\mbox{and}\quad \lim_{t\to 0^{+}}\|\partial_t u(t)-u_1\|_{-\frac{1}{\alpha}}=0$$
		for $u_0\in H$ and $u_1\in H_{-\frac{1}{\alpha}}$.
	\end{itemize}
\end{defn} 
Using the same argument as in the proof of Theorem \ref{Thm_well2}, we derive the following existence and uniqueness result for system \eqref{fseq3}.
\begin{theorem} \label{Thm_well3}
	Let $1<\alpha < 2$. Let $u_0\in H_s$ and $u_1\in H_{s-\frac{1}{\alpha}}$ for $s\geq 0$. There exists a unique weak solution $u \in C([0, T] ; H_s)$ to \eqref{fseq3} such that 
		\begin{align}
			&\|u\|_{C([0,T];H_s)}\lesssim \|u_0\|_s + \|u_1\|_{s-\frac{1}{\alpha}}.
		\end{align}
	Moreover, the unique weak solution to \eqref{fseq3} is given by the formula \eqref{solution formula3}.
\end{theorem}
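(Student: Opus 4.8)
The plan is to follow the blueprint of the proof of Theorem \ref{Thm_well2}, now carrying two initial data and working in the larger negative-order space $H_{-\frac{2}{\alpha}}$ dictated by Definition \ref{solution_def3}. For \textbf{uniqueness}, I would let $u$ be any weak solution and expand $u(t)=\sum_n u_n(t)\varphi_n$ together with the expansions of $u_0$ and $u_1$ in the basis $\{\varphi_n\}$. Since Definition \ref{solution_def3}(ii) guarantees that $\partial_t^\alpha u(t)$ and $Au(t)$ lie in $H_{-\frac{2}{\alpha}}$ for a.e. $t$, I would pair equation \eqref{fseq3}$_1$ with $\varphi_n$ in the duality $\langle\cdot,\cdot\rangle_{-\frac{2}{\alpha},\frac{2}{\alpha}}$ to obtain, for each $n$, the scalar Caputo problem
\[ \partial_t^\alpha u_n(t) - \mathrm{i}^{-\alpha}\lambda_n u_n(t) = 0, \quad u_n(0)=\langle u_0,\varphi_n\rangle, \quad u_n'(0)=\langle u_1,\varphi_n\rangle. \]
Applying the Laplace transform, using $\mathcal{L}[\partial_t^\alpha u_n](p)=p^\alpha\widehat{u}_n(p)-p^{\alpha-1}u_n(0)-p^{\alpha-2}u_n'(0)$ valid for $1<\alpha<2$, and inverting term by term yields the unique solution $u_n(t)=\langle u_0,\varphi_n\rangle E_{\alpha,1}(\mathrm{i}^{-\alpha}\lambda_n t^\alpha)+\langle u_1,\varphi_n\rangle\, t E_{\alpha,2}(\mathrm{i}^{-\alpha}\lambda_n t^\alpha)$. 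This is exactly \eqref{solution formula3}, so uniqueness follows once each Fourier coefficient is determined.

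For \textbf{existence}, I would verify that the series \eqref{solution formula3} satisfies conditions (i)--(iii) of Definition \ref{solution_def3}. Condition (i) and the asserted estimate $\|u\|_{C([0,T];H_s)}\lesssim \|u_0\|_s+\|u_1\|_{s-\frac{1}{\alpha}}$ are immediate from Lemma \ref{regularity3}(i). For (ii), termwise differentiation via \eqref{derivative0_ML} produces $\partial_t^\alpha u$ and $Au$ as series with coefficients involving $\lambda_n\langle u_0,\varphi_n\rangle E_{\alpha,1}(\cdot)$ and $\lambda_n\langle u_1,\varphi_n\rangle\, t E_{\alpha,2}(\cdot)$; using the bounds \eqref{es1}--\eqref{es2} of Corollary \ref{est12} I would check convergence of these series in $H_{-\frac{2}{\alpha}}$ for a.e. $t$, and that by construction $\mathrm{i}^\alpha\partial_t^\alpha u+Au=\sum_n(\mathrm{i}^{\alpha}\mathrm{i}^{-\alpha}\lambda_n-\lambda_n)(\cdots)\varphi_n=0$ in $H_{-\frac{2}{\alpha}}$.

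For (iii), the first initial condition $u(0)=u_0$ follows as in the subdiffusive case from $E_{\alpha,1}(0)=1$, the bound \eqref{es2} on $E_{\alpha,2}$, and dominated convergence. The second initial condition is the delicate point: from \eqref{D2}, $\partial_t u(t)$ splits into a $u_0$-term carrying the prefactor $t^{\alpha-1}$ with Mittag-Leffler factor $E_{\alpha,\alpha}(\mathrm{i}^{-\alpha}\lambda_n t^\alpha)$, and a $u_1$-term with factor $E_{\alpha,1}(\mathrm{i}^{-\alpha}\lambda_n t^\alpha)$. Since $\alpha>1$ we have $t^{\alpha-1}\to 0$, so after controlling the $E_{\alpha,\alpha}$-growth of order $(1+\lambda_n t^\alpha)^{\frac{1-\alpha}{\alpha}}$ against this prefactor (precisely the computation already carried out in Lemma \ref{regularity3}), the $u_0$-term tends to $0$ in $H_{-\frac{1}{\alpha}}$, while the $u_1$-term converges to $u_1$ because $E_{\alpha,1}(0)=1$; hence $\lim_{t\to0^+}\|\partial_t u(t)-u_1\|_{-\frac{1}{\alpha}}=0$.

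The \textbf{main obstacle} is exactly this verification in the negative-order spaces. In contrast to Theorem \ref{Thm_well2}, for $1<\alpha<2$ the functions $E_{\alpha,\alpha}$ and $E_{\alpha,\alpha-1}$ are no longer bounded but grow polynomially in $\lambda_n t^\alpha$ (see \eqref{es3}--\eqref{es4}), so the whole argument hinges on balancing these growths against the prefactors $t^{\alpha-1}$, $t^{\alpha-2}$ and the negative weights $\lambda_n^{-1/\alpha}$, $\lambda_n^{-2/\alpha}$. This balancing is what Lemma \ref{regularity3} supplies, so the genuine analytic content is already isolated there, and Theorem \ref{Thm_well3} follows by assembling those estimates together with the scalar Laplace-transform solution above.
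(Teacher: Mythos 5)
Your proposal is correct and follows essentially the same route as the paper: the paper itself proves Theorem \ref{Thm_well3} by invoking verbatim the argument of Theorem \ref{Thm_well2} (eigenbasis decomposition, duality pairing with $\varphi_n$ in the negative-order space, scalar Caputo ODEs solved by Laplace transform to force formula \eqref{solution formula3}, then verification of Definition \ref{solution_def3} via Lemma \ref{regularity3} and Corollary \ref{est12}), which is exactly what you have written out, including the correct two-term Laplace formula for $1<\alpha<2$ and the balancing of the $E_{\alpha,\alpha}$, $E_{\alpha,\alpha-1}$ growth against the prefactors $t^{\alpha-1}$, $t^{\alpha-2}$.
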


\subsection{Well-posedness of the backward problem}
\subsubsection{Subdiffusive case}
Similarly to Theorem \ref{bu}, we can prove the following backward uniqueness result.
\begin{theorem}[\textbf{Uniqueness}]\label{bu2}
Let $0<\alpha < 1$ and $u_0 \in H$. If the solution $u$ to equation \eqref{fseq3} satisfies $u(T) = 0$, then $u_0 = 0$ and hence $u(t) = 0$ for all $t \in [0, T]$.
\end{theorem}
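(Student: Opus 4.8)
The plan is to mirror the uniqueness argument of Theorem \ref{bu}(i)(a), substituting the stronger zero-free statement of Proposition \ref{zeros2} for the input that there came from Proposition \ref{zeros}. The starting point is the eigenvector expansion \eqref{solution formula2} of the weak solution to the subdiffusive problem with $\nu=\alpha$,
\[
u(t)=\sum_{n=1}^{\infty}\langle u_0,\varphi_n\rangle\,E_{\alpha,1}(\mathrm{i}^{-\alpha}\lambda_n t^\alpha)\,\varphi_n.
\]
Evaluating at $t=T$ and pairing with each basis vector $\varphi_n$, the hypothesis $u(T)=0$ yields
\[
\langle u_0,\varphi_n\rangle\,E_{\alpha,1}(\mathrm{i}^{-\alpha}\lambda_n T^\alpha)=\langle u(T),\varphi_n\rangle=0,\qquad n\in\mathbb{N}.
\]

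First I would observe that $\lambda_n T^\alpha>0$ for every $n$, since $0<\lambda_1\le\lambda_n$ and $T>0$. Proposition \ref{zeros2}, applied with $t=\lambda_n T^\alpha$, then guarantees that $E_{\alpha,1}(\mathrm{i}^{-\alpha}\lambda_n T^\alpha)\neq 0$ for all $n$. Dividing in the previous identity forces $\langle u_0,\varphi_n\rangle=0$ for every $n$, and since $\{\varphi_n\}_{n\in\mathbb{N}}$ is a Hilbert basis of $H$, we conclude $u_0=0$. The solution formula \eqref{solution formula2} then gives $u(t)=0$ for all $t\in[0,T]$, which completes the argument.

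The feature worth emphasizing is that, unlike the $\nu=1$ case treated in Theorem \ref{bu}, no restriction of the form $\alpha\le\frac{3}{5}$ and no largeness assumption on $T$ is required here: the reason is that the spectral argument $\mathrm{i}^{-\alpha}\lambda_n T^\alpha$ lies exactly on the boundary ray $|\arg z|=\frac{\pi\alpha}{2}$ of the sector on which $E_{\alpha,1}$ is known to be zero-free (by the result cited in Proposition \ref{zeros2}), whereas for $\nu=1$ the relevant argument $-\mathrm{i}\lambda_n T^\alpha$ sits on the imaginary axis $|\arg z|=\frac{\pi}{2}$, which can meet the zero set of $E_{\alpha,1}$ once $\alpha>\frac{3}{5}$. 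Thus there is essentially no remaining obstacle: the whole difficulty has been front-loaded into Proposition \ref{zeros2}, and the theorem follows immediately from the solution formula together with that zero-distribution result.
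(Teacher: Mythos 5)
Your proof is correct and is exactly the argument the paper intends: the paper only sketches this result by remarking it follows ``similarly to Theorem \ref{bu}'', i.e., expand $u(T)=0$ via the solution formula \eqref{solution formula2}, pair with each $\varphi_n$, and invoke Proposition \ref{zeros2} (in place of Proposition \ref{zeros}) to get $E_{\alpha,1}(\mathrm{i}^{-\alpha}\lambda_n T^\alpha)\neq 0$, forcing all Fourier coefficients of $u_0$ to vanish. You also correctly read the theorem's reference to \eqref{fseq3} as a typo for the subdiffusive problem \eqref{fseq2}, which is the setting consistent with $0<\alpha<1$ and a single initial datum $u_0\in H$.
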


Moreover, we have:
\begin{theorem}\label{thmhstabLip22}
Assume that $0<\alpha <1$. Then
\begin{itemize}
    \item[(i)] for any $u_T \in H_1$, there exists a weak solution $u \in C([0, T]; H)$ to \eqref{fseq3} such that $u(T)=u_T$;
    \item[(ii)] there exist constants $C>0$ such that for any solution $u$ to \eqref{fseq3},
		\begin{equation}\label{eqthmlstabLip2}
			\|u(0)\| \le C \|u(T)\|_{1}.
		\end{equation}
\end{itemize}
\end{theorem}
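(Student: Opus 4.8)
The plan is to proceed exactly as in the proof of Theorem \ref{bu}, reducing both assertions to the invertibility of the Mittag-Leffler coefficients in the solution formula \eqref{solution formula2}. Writing $u(T)$ via \eqref{solution formula2}, the identity
$$\langle u_0,\varphi_n\rangle\, E_{\alpha,1}\!\left(\mathrm{i}^{-\alpha}\lambda_n T^\alpha\right)=\langle u(T),\varphi_n\rangle$$
holds for every $n$, so everything hinges on bounding $|E_{\alpha,1}(\mathrm{i}^{-\alpha}\lambda_n T^\alpha)|$ from below. The decisive observation, which I expect to be the crux, is that along the ray $\arg z=-\frac{\pi\alpha}{2}$ the modulus of $E_{\alpha,1}$ does \emph{not} decay. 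Indeed, with $z=\mathrm{i}^{-\alpha}\lambda_n T^\alpha=(\lambda_n T^\alpha)e^{-\mathrm{i}\pi\alpha/2}$ one has $z^{1/\alpha}=-\mathrm{i}(\lambda_n T^\alpha)^{1/\alpha}$, so $\mathrm{Re}(z^{1/\alpha})=0$ and the leading exponential term in \eqref{asymptotic expansions2} has modulus exactly $\frac1\alpha$.

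Concretely, I would apply \eqref{asymptotic expansions2} with $\beta=1$ (legitimate since $|\arg z|=\frac{\pi\alpha}{2}<\mu$) to obtain
$$E_{\alpha,1}\!\left(\mathrm{i}^{-\alpha}\lambda_n T^\alpha\right)=\frac{1}{\alpha}\,e^{-\mathrm{i}(\lambda_n T^\alpha)^{1/\alpha}}+\mathcal{O}\!\left(\frac{1}{\lambda_n T^\alpha}\right),\qquad n\to\infty,$$
whence $|E_{\alpha,1}(\mathrm{i}^{-\alpha}\lambda_n T^\alpha)|\to\frac1\alpha>0$. Combined with Proposition \ref{zeros2}, which guarantees $E_{\alpha,1}(\mathrm{i}^{-\alpha}\lambda_n T^\alpha)\neq0$ for every $n$ (as $\lambda_n T^\alpha>0$), this yields a \emph{uniform} lower bound: there is $c>0$ with
$$\left|E_{\alpha,1}\!\left(\mathrm{i}^{-\alpha}\lambda_n T^\alpha\right)\right|\ge c\qquad\text{for all }n\in\mathbb{N},$$
since the tail of the sequence sits near $\frac1\alpha$ and the finitely many remaining values are all strictly positive.

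For existence (i), I would set $u_0:=\sum_n \langle u_T,\varphi_n\rangle\, E_{\alpha,1}(\mathrm{i}^{-\alpha}\lambda_n T^\alpha)^{-1}\varphi_n$; the uniform bound gives $\|u_0\|^2\le c^{-2}\sum_n|\langle u_T,\varphi_n\rangle|^2=c^{-2}\|u_T\|^2<\infty$, so $u_0\in H$ (the hypothesis $u_T\in H_1$ is more than enough here), and Theorem \ref{Thm_well2} produces the associated weak solution, which satisfies $u(T)=u_T$ by construction. For stability (ii), the same bound applied to the coefficient identity gives directly
$$\|u(0)\|^2=\sum_{n=1}^{\infty}\frac{|\langle u(T),\varphi_n\rangle|^2}{\left|E_{\alpha,1}(\mathrm{i}^{-\alpha}\lambda_n T^\alpha)\right|^2}\le c^{-2}\|u(T)\|^2\le c^{-2}\lambda_1^{-2}\|u(T)\|_1^2,$$
so \eqref{eqthmlstabLip2} holds with $C=(c\lambda_1)^{-1}$. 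Note that this argument in fact yields the stronger estimate $\|u(0)\|\le c^{-1}\|u(T)\|$; the advertised bound with the $H_1$-norm then follows from $\|u(T)\|\le\lambda_1^{-1}\|u(T)\|_1$. The contrast with the case $\nu=1$ is precisely the non-decay of $|E_{\alpha,1}|$ on this ray, which is what removes the loss of one derivative and renders the backward map bounded on $H$.
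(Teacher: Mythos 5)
Your proof is correct, and it uses the same ingredients as the paper: the coefficient identity coming from \eqref{solution formula2}, the asymptotic expansion \eqref{asymptotic expansions2} with $\beta=1$ along the ray $\arg z=-\frac{\pi\alpha}{2}$, and Proposition \ref{zeros2} to handle the finitely many remaining indices. The genuine difference is what you extract from the expansion. The paper, from the same formula
\[
E_{\alpha,1}(\mathrm{i}^{-\alpha}\lambda_nT^{\alpha})=\frac{1}{\alpha}\,e^{-\mathrm{i}\lambda_n^{1/\alpha}T}+\mathcal{O}\!\left(\lambda_n^{-1}\right),
\]
only records the lossy bound $\left|E_{\alpha,1}(\mathrm{i}^{-\alpha}\lambda_{n}T^{\alpha})\right|^{-1}\le C\lambda_n$, mirroring the case $\nu=1$ (where the Mittag-Leffler factor genuinely decays like $\lambda_n^{-1}$ and the loss of one derivative is unavoidable); this is precisely why the statement carries the norm $\|u(T)\|_{1}$ and the hypothesis $u_T\in H_1$. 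You instead observe that the leading exponential has unit modulus, so $|E_{\alpha,1}(\mathrm{i}^{-\alpha}\lambda_nT^{\alpha})|\to\frac{1}{\alpha}$, and combined with the nonvanishing from Proposition \ref{zeros2} this gives a uniform lower bound $c>0$ for all $n$. That sharpening is legitimate (the implied constant in \eqref{asymptotic expansions2} is uniform along the fixed ray), and it buys strictly more than the theorem asserts: the backward map is bounded on $H$, i.e.\ $\|u(0)\|\le c^{-1}\|u(T)\|$, and existence holds for every $u_T\in H$, with \eqref{eqthmlstabLip2} recovered via $\|u(T)\|\le\lambda_1^{-1}\|u(T)\|_{1}$. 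So your argument both proves the statement and shows that its $H_1$-regularity requirement can in fact be relaxed to $H$.
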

The proof is similar to Theorem \ref{bu}, but here $|\arg(\mathrm{i}^{-\alpha}\lambda_{n} T^{\alpha})|=\frac{\pi \alpha}{2}$ suggests using the asymptotic expansion \eqref{asymptotic expansions2} to obtain
$$
E_{\alpha,1}(\mathrm{i}^{-\alpha}\lambda_nT^{\alpha}) =\frac{1}{\alpha} e^{-\mathrm{i}\lambda_n^{\frac{1}{\alpha}}T}- \frac{1}{\Gamma(1-\alpha)\mathrm{i}^{-\alpha}\lambda_nT^{\alpha}}
+ \mathcal{O}\left(\frac{1}{\lambda_n^2}\right)
$$
for all large $n \in \mathbb{N}$. Therefore, we can deduce
\begin{align*}
\left|E_{\alpha, 1}\left(\mathrm{i}^{-\alpha}\lambda_{n} T^{\alpha}\right)\right|^{-1} 
\le C\lambda_n. 
\end{align*}

We recall that $\mathcal{U}_{\varepsilon,M}$ is defined by \eqref{uad}. Since $E_{\alpha, 1}\left(\mathrm{i}^{-\alpha}\lambda_{n} T^{\alpha}\right)\neq 0$ for all $n\in\mathbb{N}$ by Proposition \ref{zeros}, similarly to Theorem \ref{thmhstab}, we can prove:
\begin{theorem}\label{thmhstab2}
Let $0<\alpha <1$ and $\beta =\frac{\varepsilon}{\varepsilon + 1}$. Let $u$ be the associated weak solution to \eqref{fseq3}. There exists a constant $C=C(\alpha)$ such that
\begin{equation}\label{eqthmhstab2}
    \|u(0)\| \le C^{\beta} M^{1-\beta} \|u(T)\|^{\beta}
\end{equation}
for all $u_0\in \mathcal{U}_{\varepsilon,M}$.
\end{theorem}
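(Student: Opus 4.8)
The plan is to transcribe verbatim the argument used in the proof of Theorem \ref{thmhstab}, replacing the Mittag-Leffler argument $-\mathrm{i}\lambda_n T^\alpha$ by $\mathrm{i}^{-\alpha}\lambda_n T^\alpha$. The two structural inputs are the nonvanishing of the Mittag-Leffler values and their polynomial minoration along the relevant ray. First I would invoke Proposition \ref{zeros2}, which guarantees $E_{\alpha,1}(\mathrm{i}^{-\alpha}\lambda_n T^\alpha)\neq 0$ for every $n\in\mathbb{N}$ and every $T>0$. This is the point that makes the $\nu=\alpha$ case cleaner than the $\nu=1$ case: here the argument lies exactly on the ray $\arg z=-\frac{\pi\alpha}{2}$, on which $E_{\alpha,1}$ has no zeros for the whole range $0<\alpha<1$ by the Ostrowski--Paris results of \cite{OP97}, so no restriction such as $\alpha\le\frac{3}{5}$ or largeness of $T$ is needed.

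Using the solution formula \eqref{solution formula2} at $t=T$, one has $\langle u(T),\varphi_n\rangle=\langle u_0,\varphi_n\rangle E_{\alpha,1}(\mathrm{i}^{-\alpha}\lambda_n T^\alpha)$, whence
\begin{equation*}
\|u(0)\|^2=\sum_{n=1}^{\infty}|\langle u_0,\varphi_n\rangle|^2=\sum_{n=1}^{\infty}\frac{|\langle u(T),\varphi_n\rangle|^2}{\left|E_{\alpha,1}\left(\mathrm{i}^{-\alpha}\lambda_n T^\alpha\right)\right|^2}.
\end{equation*}
I would then split each summand as $|\langle u(T),\varphi_n\rangle|^2=|\langle u(T),\varphi_n\rangle|^{2(1-\beta)}\,|\langle u(T),\varphi_n\rangle|^{2\beta}$ and apply H\"older's inequality with conjugate exponents $\frac{1}{1-\beta}$ and $\frac{1}{\beta}$, exactly as in Theorem \ref{thmhstab}. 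This bounds $\|u(0)\|^2$ by the product of $\left(\sum_{n}|\langle u_0,\varphi_n\rangle|^2\left|E_{\alpha,1}(\mathrm{i}^{-\alpha}\lambda_n T^\alpha)\right|^{-\frac{2\beta}{1-\beta}}\right)^{1-\beta}$ and $\|u(T)\|^{2\beta}$. Inserting the minoration $\left|E_{\alpha,1}(\mathrm{i}^{-\alpha}\lambda_n T^\alpha)\right|^{-1}\le C\lambda_n$, established right after Theorem \ref{thmhstabLip22} from the asymptotic expansion \eqref{asymptotic expansions2} on the ray $|\arg(\mathrm{i}^{-\alpha}\lambda_n T^\alpha)|=\frac{\pi\alpha}{2}$, and using the identity $\frac{\beta}{1-\beta}=\varepsilon$ so that the weighted sum is exactly $\sum_{n}\lambda_n^{2\varepsilon}|\langle u_0,\varphi_n\rangle|^2=\|u_0\|_\varepsilon^2\le M^2$, I arrive at $\|u(0)\|^2\le C^{2\beta}M^{2(1-\beta)}\|u(T)\|^{2\beta}$; taking square roots yields \eqref{eqthmhstab2}.

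I do not anticipate a genuine obstacle, since the computation is a line-by-line copy of the proof of Theorem \ref{thmhstab} once the two ingredients are in place. The only point requiring care, and the reason the statement holds for all $0<\alpha<1$ rather than merely $\alpha\le\frac{3}{5}$, is the global nonvanishing along $\arg z=-\frac{\pi\alpha}{2}$ furnished by Proposition \ref{zeros2}; this is precisely the property that fails on the imaginary axis in the $\nu=1$ problem and forces the $\frac{3}{5}$ threshold there.
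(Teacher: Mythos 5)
Your proposal is correct and follows essentially the same route as the paper, whose proof of this theorem is precisely the Hölder computation of Theorem \ref{thmhstab} combined with the two ingredients you name: non-vanishing of $E_{\alpha,1}(\mathrm{i}^{-\alpha}\lambda_n T^{\alpha})$ on the ray $\arg z=-\frac{\pi\alpha}{2}$ and the minoration $\left|E_{\alpha,1}(\mathrm{i}^{-\alpha}\lambda_n T^{\alpha})\right|^{-1}\le C\lambda_n$ established after Theorem \ref{thmhstabLip22}. Your citation of Proposition \ref{zeros2} is in fact the more accurate one: the paper's text invokes Proposition \ref{zeros}, which concerns the imaginary axis rather than the ray $\arg z=-\frac{\pi\alpha}{2}$, so this appears to be a slip that your write-up silently corrects.
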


\subsubsection{Superdiffusive case}
Let $1<\alpha<2$ and consider the following backward problem: For given $T>0$ and $u_T,v_T\in H$, determine $u(t)$ solution to:
\begin{equation}\label{fsweq}
\begin{cases}
\mathrm{i}^{\alpha}\partial_t^\alpha u +Au=0, &\qquad t \in (0,T),\\
u(0)=u_0, \qquad \partial_t u(0)=u_1.
\end{cases}
\end{equation}
To study the backward problem, we need to establish some properties of the following function
\begin{equation}\label{psidf}
\psi(t)=E_{\alpha, 1}(\mathrm{i}^{-\alpha}t)^2-\mathrm{i}^{-\alpha}t E_{\alpha, 2}(\mathrm{i}^{-\alpha}t) E_{\alpha, \alpha}(\mathrm{i}^{-\alpha}t), \quad t>0.
\end{equation}

\begin{lem}
The function $\psi(t)$ satisfies the following asymptotic expansion
\begin{equation}\label{asypsi}
\psi(t)=\frac{\mathrm{i}^{\alpha-1}}{\alpha\Gamma(2-\alpha)} t^{\frac{1}{\alpha}-1}e^{-\mathrm{i}t^{\frac{1}{\alpha}}}-\frac{2 \mathrm{i}^\alpha}{\alpha\Gamma(1-\alpha)t}e^{-\mathrm{i}t^{\frac{1}{\alpha}}}+\mathcal{O}\left(\frac{1}{t^{1+\frac{1}{\alpha}}}\right), \; t\to \infty.
\end{equation}
\end{lem}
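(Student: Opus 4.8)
The plan is to substitute $z=\mathrm{i}^{-\alpha}t$ into the three Mittag--Leffler factors in \eqref{psidf}, expand each by the exponential-type asymptotics \eqref{asymptotic expansions2}, and then form and subtract the two products $E_{\alpha,1}(z)^2$ and $\mathrm{i}^{-\alpha}t\,E_{\alpha,2}(z)E_{\alpha,\alpha}(z)$. First I would verify that \eqref{asymptotic expansions2} is applicable: since $|\arg(\mathrm{i}^{-\alpha}t)|=\frac{\pi\alpha}{2}$ and $1<\alpha<2$, one may fix $\mu$ with $\frac{\pi\alpha}{2}<\mu<\min(\pi,\pi\alpha)=\pi$, so every factor lies in the sector $|\arg z|\le\mu$ where the exponential term is present. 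I would then record the elementary identities $z^{1/\alpha}=-\mathrm{i}\,t^{1/\alpha}$, $z^{1/\alpha-1}=\mathrm{i}^{\alpha-1}t^{1/\alpha-1}$ and $z^{-1}=\mathrm{i}^{\alpha}t^{-1}$; in particular $z^{1/\alpha}$ is purely imaginary, so $\lvert e^{z^{1/\alpha}}\rvert=1$ and all of the decay must come from the algebraic prefactors, not from the exponential.

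Next I would write, to two algebraic orders,
\begin{align*}
E_{\alpha,1}(z)&=\tfrac{1}{\alpha}e^{z^{1/\alpha}}-\tfrac{1}{\Gamma(1-\alpha)}z^{-1}+\mathcal{O}(z^{-2}),\\
E_{\alpha,2}(z)&=\tfrac{1}{\alpha}z^{-1/\alpha}e^{z^{1/\alpha}}-\tfrac{1}{\Gamma(2-\alpha)}z^{-1}+\mathcal{O}(z^{-2}),\\
E_{\alpha,\alpha}(z)&=\tfrac{1}{\alpha}z^{1/\alpha-1}e^{z^{1/\alpha}}+\mathcal{O}(z^{-2}),
\end{align*}
using the simplification $\tfrac{1}{\Gamma(\alpha-\alpha)}=\tfrac{1}{\Gamma(0)}=0$, which kills the $z^{-1}$ tail of $E_{\alpha,\alpha}$. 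The decisive step is then a cancellation: the product $E_{\alpha,2}(z)E_{\alpha,\alpha}(z)$ carries the purely exponential contribution $\tfrac{1}{\alpha^2}z^{-1}e^{2z^{1/\alpha}}$, and since $\mathrm{i}^{-\alpha}t\cdot z^{-1}=1$ this becomes exactly $\tfrac{1}{\alpha^2}e^{2z^{1/\alpha}}$, matching the leading term of $E_{\alpha,1}(z)^2$. These two $\mathcal{O}(1)$ (modulus-one) terms cancel identically, which is precisely what forces $\psi$ to decay rather than to oscillate at constant size; this is the heart of the computation.

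After the cancellation the surviving leading contributions are the cross products of an exponential prefactor with an algebraic tail. Pairing the $z^{-1}$ tail of $E_{\alpha,2}$ with the exponential of $E_{\alpha,\alpha}$ and including the factor $\mathrm{i}^{-\alpha}t$ (which again collapses $\mathrm{i}^{-\alpha}t\,z^{1/\alpha-2}=z^{1/\alpha-1}$) produces $\tfrac{1}{\alpha\Gamma(2-\alpha)}z^{1/\alpha-1}e^{z^{1/\alpha}}$, while the cross term in $E_{\alpha,1}(z)^2$ produces $-\tfrac{2}{\alpha\Gamma(1-\alpha)}z^{-1}e^{z^{1/\alpha}}$. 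Substituting the identities $z^{1/\alpha-1}=\mathrm{i}^{\alpha-1}t^{1/\alpha-1}$, $z^{-1}=\mathrm{i}^{\alpha}t^{-1}$ and $e^{z^{1/\alpha}}=e^{-\mathrm{i}t^{1/\alpha}}$ turns these into $\tfrac{\mathrm{i}^{\alpha-1}}{\alpha\Gamma(2-\alpha)}t^{1/\alpha-1}e^{-\mathrm{i}t^{1/\alpha}}$ and $-\tfrac{2\mathrm{i}^{\alpha}}{\alpha\Gamma(1-\alpha)t}e^{-\mathrm{i}t^{1/\alpha}}$, which are exactly the two displayed terms in \eqref{asypsi}.

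The main obstacle is the remainder estimate, since one is multiplying expansions that mix a modulus-one exponential with algebraic tails living at both integer powers $z^{-k}$ and fractional powers $z^{j/\alpha-k}$. I would carry each of the three expansions to order $\mathcal{O}(z^{-2})$, track every product of an exponential prefactor with an algebraic tail, and then order the resulting exponents $\tfrac{1}{\alpha}-1$, $-1$, $\tfrac{1}{\alpha}-2$ and $-1-\tfrac{1}{\alpha}$ for $1<\alpha<2$ to decide which terms are genuinely subsumed by the stated remainder. The delicate point is exactly this ordering of mixed fractional exponents and confirming how the surviving products collapse under the $\mathrm{i}^{-\alpha}t$ prefactor; it is this bookkeeping, rather than any single hard estimate, that pins down the final error term in \eqref{asypsi}.
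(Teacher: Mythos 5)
Your route coincides with the paper's own proof: substitute $z=\mathrm{i}^{-\alpha}t$, expand the three factors by \eqref{asymptotic expansions2}, multiply, and let the order-one terms $\frac{1}{\alpha^{2}}e^{2z^{1/\alpha}}$ cancel. Your sector check, the identities $z^{1/\alpha}=-\mathrm{i}\,t^{1/\alpha}$, $z^{-1}=\mathrm{i}^{\alpha}t^{-1}$, $z^{1/\alpha-1}=\mathrm{i}^{\alpha-1}t^{1/\alpha-1}$, the use of $1/\Gamma(0)=0$, and the identification of the two surviving cross terms are exactly the paper's computation (the paper records the same three expansions, keeping for $E_{\alpha,\alpha}$ its first nonvanishing algebraic term $-z^{-2}/\Gamma(-\alpha)$, and then concludes by ``a simple calculation'').

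The gap is in the step you defer to the end, and it is a real one. Carrying each factor only to $\mathcal{O}(z^{-2})$ cannot give the stated remainder: the $\mathcal{O}(z^{-2})$ error of $E_{\alpha,2}$, multiplied by the exponential term $\frac{1}{\alpha}z^{1/\alpha-1}e^{z^{1/\alpha}}$ of $E_{\alpha,\alpha}$ and by the prefactor $\mathrm{i}^{-\alpha}t=z$ from \eqref{psidf}, is only $\mathcal{O}(z^{1/\alpha-2})$; in your own ordering of exponents, $\frac{1}{\alpha}-2>-1-\frac{1}{\alpha}$ for $1<\alpha<2$, so this is \emph{not} subsumed by $\mathcal{O}(t^{-1-1/\alpha})$. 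Pushing $E_{\alpha,2}$ one order further does not repair this: its $k=2$ term $-z^{-2}/\Gamma(2-2\alpha)$, paired with the exponential of $E_{\alpha,\alpha}$ and the prefactor $z$, contributes to $\psi$ the explicit term
\begin{equation*}
+\frac{1}{\alpha\Gamma(2-2\alpha)}\,z^{\frac{1}{\alpha}-2}e^{z^{1/\alpha}}
=\frac{\mathrm{i}^{2\alpha-1}}{\alpha\Gamma(2-2\alpha)}\,t^{\frac{1}{\alpha}-2}e^{-\mathrm{i}t^{\frac{1}{\alpha}}},
\end{equation*}
which nothing in $E_{\alpha,1}(z)^{2}$ can cancel (the terms there carrying a single factor $e^{z^{1/\alpha}}$ have only integer powers of $z$), and whose coefficient vanishes only when $2-2\alpha$ is a nonpositive integer, i.e.\ $\alpha=\frac{3}{2}$. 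So an honest execution of your bookkeeping produces the two displayed terms with remainder $\mathcal{O}(t^{\frac{1}{\alpha}-2})$, and in fact shows that the remainder claimed in \eqref{asypsi} is unobtainable for general $\alpha\in(1,2)$. You are in good company: the paper's proof expands each factor with an $\mathcal{O}(t^{-2})$ error and suffers from exactly the same defect. The discrepancy is harmless downstream, since Lemma \ref{zerospsi} and the lower bound $|\psi(t)|\geq \frac{1}{2\alpha\Gamma(2-\alpha)}t^{\frac{1}{\alpha}-1}$ only require every correction to be $o(t^{\frac{1}{\alpha}-1})$, which both versions deliver; but as a proof of \eqref{asypsi} exactly as stated, your plan (like the paper's) falls short, and the clean fix is to weaken the remainder to $\mathcal{O}(t^{\frac{1}{\alpha}-2})$ or to display the third term above explicitly.
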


\begin{proof}
Since $\alpha>1$, it suffices to apply the formula \eqref{asymptotic expansions2} to obtain a second-order asymptotic expansion for each term:
\begin{equation}\label{asyE}
\begin{aligned}
E_{\alpha, 1}(\mathrm{i}^{-\alpha}t)&=\frac{1}{\alpha} e^{-\mathrm{i}t^{\frac{1}{\alpha}}}-\frac{1}{\Gamma(1-\alpha) \mathrm{i}^{-\alpha}t}+\mathcal{O}\left(\frac{1}{t^2}\right), \\
E_{\alpha, 2}(\mathrm{i}^{-\alpha}t)&=\frac{1}{\alpha} \mathrm{i}t^{\frac{-1}{\alpha}} e^{-\mathrm{i}t^{\frac{1}{\alpha}}}-\frac{1}{\Gamma(2-\alpha) \mathrm{i}^{-\alpha}t}+\mathcal{O}\left(\frac{1}{t^2}\right), \\
tE_{\alpha, \alpha}(\mathrm{i}^{-\alpha}t)&=\frac{1}{\alpha} \mathrm{i}^{\alpha-1}t^{\frac{1}{\alpha}} e^{-\mathrm{i}t^{\frac{1}{\alpha}}}-\frac{1}{\Gamma(-\alpha) \mathrm{i}^{-2\alpha}t} +\mathcal{O}\left(\frac{1}{t^2}\right), \; t\to \infty.
\end{aligned}
\end{equation}
A simple calculation then yields the result.
\end{proof}

Using the asymptotic expansion \eqref{asypsi} and similarly to the proof of Proposition \ref{zeros}-(ii), we obtain
\begin{lem}\label{zerospsi}
There exists a constant $R>0$ such that $\psi(t)\neq 0$ for all $t> R$.
\end{lem}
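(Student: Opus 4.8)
The plan is to follow the scheme of Proposition~\ref{zeros}-(ii): extract the leading term of the asymptotic expansion \eqref{asypsi} and use the reverse triangle inequality to show that its modulus overwhelms the remaining contributions once $t$ is large. First I would factor out the common oscillatory factor $e^{-\mathrm{i}t^{1/\alpha}}$ appearing in the two explicit terms of \eqref{asypsi}. This step is the conceptual heart of the argument: since $t^{1/\alpha}$ is real for $t>0$, one has $|e^{-\mathrm{i}t^{1/\alpha}}|=1$, so this factor can never be small and no oscillatory cancellation with the error term can spoil the estimate. Setting $A=\frac{\mathrm{i}^{\alpha-1}}{\alpha\Gamma(2-\alpha)}$ and $B=-\frac{2\mathrm{i}^\alpha}{\alpha\Gamma(1-\alpha)}$, the expansion reads
$$\psi(t)=e^{-\mathrm{i}t^{1/\alpha}}\bigl(A\,t^{\frac{1}{\alpha}-1}+B\,t^{-1}\bigr)+\mathcal{O}\bigl(t^{-1-\frac{1}{\alpha}}\bigr),\qquad t\to\infty.$$

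Next I would bound, for some constant $C>0$ absorbing the error and all $t$ large,
$$|\psi(t)|\ge |A|\,t^{\frac{1}{\alpha}-1}-|B|\,t^{-1}-C\,t^{-1-\frac{1}{\alpha}}=t^{\frac{1}{\alpha}-1}\Bigl(|A|-|B|\,t^{-\frac{1}{\alpha}}-C\,t^{-\frac{2}{\alpha}}\Bigr).$$
Because $1<\alpha<2$ gives $2-\alpha\in(0,1)$, the value $\Gamma(2-\alpha)$ is finite and positive, so $|A|=\frac{1}{\alpha\Gamma(2-\alpha)}>0$; and since $\frac{1}{\alpha}>0$, both correction powers $t^{-1/\alpha}$ and $t^{-2/\alpha}$ vanish as $t\to\infty$. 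Hence the bracketed factor tends to $|A|>0$, so there is $R>0$ with the bracket $\ge |A|/2>0$ for all $t>R$. As $t^{1/\alpha-1}>0$, this forces $|\psi(t)|>0$, i.e.\ $\psi(t)\neq 0$, for every $t>R$.

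I do not expect a genuine obstacle here, precisely because the potential difficulty is neutralized by the structure of \eqref{asypsi}: the only way the argument could fail is if the leading coefficient $A$ vanished or if the leading exponential decayed, and neither happens. The exponent gaps $\frac{1}{\alpha}$ (leading versus the $t^{-1}$ term) and $\frac{2}{\alpha}$ (leading versus the error) are both strictly positive, so the term $t^{1/\alpha-1}$ is unambiguously dominant. The mild bookkeeping needed is simply to make the implied constant in the $\mathcal{O}$-term explicit over a region $t>R_0$, which is guaranteed by the qualitative statement of \eqref{asypsi}; thereafter the choice of $R\ge R_0$ is routine.
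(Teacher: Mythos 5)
Your proof is correct and takes essentially the same approach as the paper: the paper's argument consists precisely of invoking the asymptotic expansion \eqref{asypsi} and repeating the reverse-triangle-inequality argument from Proposition \ref{zeros}-(ii), which is exactly what you carry out (unimodular oscillatory factor, nonvanishing leading coefficient $\tfrac{1}{\alpha\Gamma(2-\alpha)}$, lower-order terms absorbed for large $t$). Your version simply makes explicit the details the paper leaves to the reader.
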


By analyticity of $\psi(t)$ and Lemma \ref{zerospsi}, we know that 
the set $\{ t>0:\; \psi(t) = 0\}$ is a finite set (it might be empty).
Therefore, we can set
$$
\{t>0:\; \psi(t) = 0\} =: \{ t_k\}_{1\le k \le N},
$$
with some $N \in \mathbb{N} \cup \{0\}$ and $t_1 <\cdots <t_N$. We set
\begin{equation}
\Lambda:=\bigcup_{n=1}^\infty\left\{ \left(\frac{t_1}{\lambda_n}\right)^{\frac{1}{\alpha}},\ldots,
\left(\frac{t_N}{\lambda_n}\right)^{\frac{1}{\alpha}}\right\}.
\end{equation}
We notice that $\Lambda$ may be empty.

We have $t_k = \lambda_n T^{\alpha}$ for some $1\le k \le N$ if and only if $T \in \Lambda.$

\begin{prop} We assume that $\Lambda \neq \varnothing$. Then
\begin{itemize}
\item[(i)] $\Lambda \subset (0,\infty)$ is a discrete set.
\item[(ii)] $\sup \Lambda = \left(\frac{t_N}{\lambda_1}\right)^{\frac{1}{\alpha}}$.
\item[(iii)] $0$ is a unique accumulation point of $\Lambda$.
\end{itemize}
\end{prop}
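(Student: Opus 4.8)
The plan is to reduce all three assertions to a single elementary finiteness estimate, exploiting that $\{t_k\}_{1\le k\le N}$ is a finite set with $0<t_1<\cdots<t_N$ and that $\lambda_n\to\infty$ with $\lambda_n\ge\lambda_1>0$. First I would observe that every element of $\Lambda$ has the form $\left(t_k/\lambda_n\right)^{1/\alpha}$ with $t_k>0$ and $\lambda_n>0$, so each such element is a strictly positive real number; this immediately yields $\Lambda\subset(0,\infty)$, which is claim (i)'s ambient statement.

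The key step, from which both (i) and (iii) follow, is to prove that for every $\delta>0$ the truncated set $\Lambda\cap[\delta,\infty)$ is finite. Indeed, if $\left(t_k/\lambda_n\right)^{1/\alpha}\ge\delta$, then $\lambda_n\le t_k/\delta^\alpha\le t_N/\delta^\alpha$; since $\lambda_n\to\infty$, only finitely many indices $n$ obey this bound, and $k$ ranges over the finite set $\{1,\dots,N\}$, so only finitely many pairs $(k,n)$ can contribute. Discreteness is then immediate: given any $x\in\Lambda$, applying the estimate with $\delta=x/2$ shows that $\Lambda\cap[x/2,\infty)$ is finite, so $x$ is an isolated point and $\Lambda$ is discrete in $(0,\infty)$.

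For the supremum (ii), I would note that $t_k\le t_N$ and $\lambda_n\ge\lambda_1$ give $t_k/\lambda_n\le t_N/\lambda_1$ for every admissible pair, whence $\left(t_k/\lambda_n\right)^{1/\alpha}\le\left(t_N/\lambda_1\right)^{1/\alpha}$ because $t\mapsto t^{1/\alpha}$ is increasing for $\alpha>1$. Choosing $k=N$ and $n=1$ shows this upper bound is attained, so the supremum is in fact a maximum equal to $\left(t_N/\lambda_1\right)^{1/\alpha}$.

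Finally, for (iii), fixing any $k$ and letting $n\to\infty$ gives $\left(t_k/\lambda_n\right)^{1/\alpha}\to 0$, so $0$ is an accumulation point of $\Lambda$; conversely, any purported positive accumulation point $p>0$ would force infinitely many elements of $\Lambda$ into $[p/2,\infty)$, contradicting the finiteness estimate, so $0$ is the unique accumulation point. The argument is entirely elementary and essentially identical to the one sketched for the set $\Upsilon$ following \cite{FY20}; there is no genuine obstacle, and the only point demanding a little care is the observation that possible repetitions in the eigenvalue sequence $\{\lambda_n\}$ are harmless, since all that is ever used is $\lambda_n\to\infty$ together with $\lambda_n\ge\lambda_1$.
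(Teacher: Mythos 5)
Your proof is correct. The paper itself states this proposition without proof (deferring, by analogy with the set $\Upsilon$, to the argument of \cite{FY20}), and your elementary finiteness estimate --- that $\Lambda\cap[\delta,\infty)$ is finite for every $\delta>0$ because $\left(t_k/\lambda_n\right)^{1/\alpha}\ge\delta$ forces $\lambda_n\le t_N/\delta^{\alpha}$, which only finitely many $n$ satisfy --- is precisely the core of the intended argument, from which discreteness, the identity $\sup\Lambda=\left(t_N/\lambda_1\right)^{1/\alpha}$ (attained at $k=N$, $n=1$), and the identification of $0$ as the unique accumulation point all follow exactly as you describe.
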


\begin{rmk}\label{psize}
For $1<\alpha<2$, we do not know if the function $\psi(\alpha,t):=\psi(t)$ has any zeros $t>0$. From Figure \ref{Fig_1}, we can see that $\psi(t)$ has no zeros on $(0,\infty)$. The real and imaginary parts of $\psi(t)$ both can have zeros on $(0,\infty)$, but not simultaneously (see Figure \ref{Fig_22}).
\end{rmk}

\begin{figure}[H]
\centering
\includegraphics[scale=0.6]{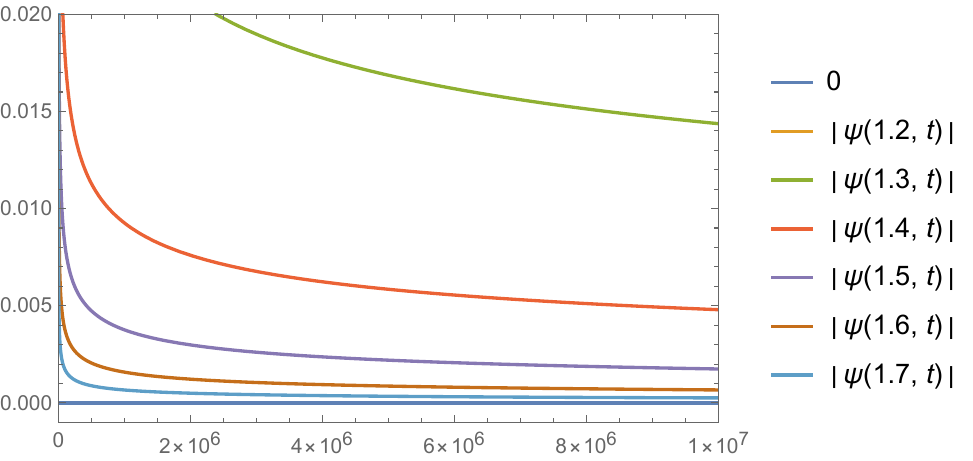}
\caption{$|\psi(t)|$ for $\alpha\in \{1.2,1.3,1.4,1.5,1.6,1.7\}$.}
\label{Fig_1}
\end{figure}

\begin{figure}[H]
\centering
\includegraphics[scale=0.6]{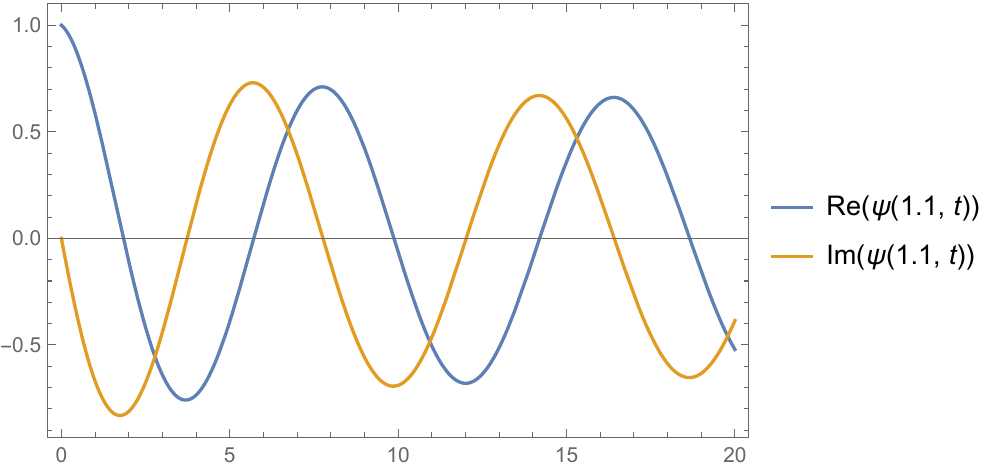}
\caption{$\mathrm{Re}\, \psi(t)$ and $\mathrm{Im}\, \psi(t)$ for $\alpha=1.1$.}
\label{Fig_22}
\end{figure}

Next, we state the main result on the well-posedness of the backward problem.
\begin{theorem}
Assume that $T\notin \Lambda$. For any $u_T, v_T \in H_1$, there exist $u_0, u_1 \in H$ such that the solution $u$ to \eqref{fsweq} satisfies
$$
u(T)=u_T, \qquad \partial_t u(T)=v_T.
$$
Moreover, there exists a constant $C>0$ such that
\begin{equation}
\|u(0)\|+\|\partial_t u(0)\| \leq C\left(\left\|u(T)\right\|_{1}+\left\|\partial_t u(T)\right\|_{1}\right)
\end{equation}
for all $u_T, v_T \in H_1$.
\end{theorem}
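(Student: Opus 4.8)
The plan is to project \eqref{fsweq} onto the eigenbasis $\{\varphi_n\}$ and reduce the two terminal conditions to a family of $2\times 2$ linear systems. Writing $a_n=\langle u_0,\varphi_n\rangle$, $b_n=\langle u_1,\varphi_n\rangle$, $p_n=\langle u_T,\varphi_n\rangle$, $q_n=\langle v_T,\varphi_n\rangle$ and $z_n=\mathrm{i}^{-\alpha}\lambda_n T^\alpha$, the requirements $u(T)=u_T$ and $\partial_t u(T)=v_T$, read off from \eqref{solution formula3} and \eqref{D2}, become
\begin{equation*}
\begin{pmatrix} E_{\alpha,1}(z_n) & T E_{\alpha,2}(z_n)\\ \mathrm{i}^{-\alpha}\lambda_n T^{\alpha-1}E_{\alpha,\alpha}(z_n) & E_{\alpha,1}(z_n)\end{pmatrix}\begin{pmatrix} a_n\\ b_n\end{pmatrix}=\begin{pmatrix} p_n\\ q_n\end{pmatrix}.
\end{equation*}
A direct computation of the determinant, using $\mathrm{i}^{-\alpha}\lambda_n T^\alpha=z_n$, gives exactly $E_{\alpha,1}(z_n)^2-\mathrm{i}^{-\alpha}\lambda_nT^\alpha E_{\alpha,2}(z_n)E_{\alpha,\alpha}(z_n)=\psi(\lambda_n T^\alpha)$, where $\psi$ is the function from \eqref{psidf}. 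This is precisely why $\psi$ was introduced, and it is where the whole argument hinges.

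By the definition of $\Lambda$, the hypothesis $T\notin\Lambda$ is equivalent to $\psi(\lambda_n T^\alpha)\neq 0$ for every $n$, so each system is uniquely solvable. Cramer's rule yields
\begin{align*}
a_n&=\frac{E_{\alpha,1}(z_n)\,p_n-T E_{\alpha,2}(z_n)\,q_n}{\psi(\lambda_n T^\alpha)},\\
b_n&=\frac{-\mathrm{i}^{-\alpha}\lambda_n T^{\alpha-1}E_{\alpha,\alpha}(z_n)\,p_n+E_{\alpha,1}(z_n)\,q_n}{\psi(\lambda_n T^\alpha)},
\end{align*}
and I would set $u_0=\sum_n a_n\varphi_n$, $u_1=\sum_n b_n\varphi_n$. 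It then remains to prove $u_0,u_1\in H$ with the stated bound; once this is established, $u_0\in H$ and $u_1\in H\subset H_{-1/\alpha}$ feed into the forward well-posedness result Theorem \ref{Thm_well3} (with $s=0$), producing the unique weak solution, and the terminal identities $u(T)=u_T$, $\partial_t u(T)=v_T$ hold by the coefficient matching built into the system.

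The core estimate is a uniform lower bound for $|\psi(\lambda_n T^\alpha)|$. Since $|\arg z_n|=\frac{\pi\alpha}{2}$ and $|e^{-\mathrm{i}t^{1/\alpha}}|=1$, the asymptotic expansion \eqref{asypsi} gives $|\psi(t)|\sim \frac{1}{\alpha|\Gamma(2-\alpha)|}\,t^{\frac1\alpha-1}$ as $t\to\infty$, the first term dominating because $\frac1\alpha-1>-1$. Combined with the nonvanishing on $(R,\infty)$ from Lemma \ref{zerospsi} and the analyticity of $\psi$ (only finitely many $\lambda_n T^\alpha$ lie in the bounded region, where $\psi$ is continuous and nonzero thanks to $T\notin\Lambda$, hence bounded below), this yields $|\psi(\lambda_n T^\alpha)|^{-1}\lesssim (1+\lambda_n)^{1-\frac1\alpha}$ for all $n$. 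Inserting this together with the Mittag-Leffler bounds of Corollary \ref{est12} — $|E_{\alpha,1}(z_n)|\lesssim 1$ by \eqref{es1}, $|E_{\alpha,2}(z_n)|\lesssim (1+\lambda_n T^\alpha)^{-1/\alpha}$ by \eqref{es2}, and $|E_{\alpha,\alpha}(z_n)|\lesssim (1+\lambda_n T^\alpha)^{(1-\alpha)/\alpha}$ by \eqref{es3} — into the Cramer expressions and matching powers of $\lambda_n$ gives $|a_n|\lesssim \lambda_n^{1-\frac1\alpha}|p_n|+\lambda_n^{1-\frac2\alpha}|q_n|$ and $|b_n|\lesssim \lambda_n|p_n|+\lambda_n^{1-\frac1\alpha}|q_n|$. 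Squaring, summing and using $u_T,v_T\in H_1$ then deliver $\|u_0\|^2+\|u_1\|^2\lesssim \|u_T\|_1^2+\|v_T\|_1^2$, which is exactly the asserted stability estimate.

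The main obstacle, and the precise point where the full $H_1$ regularity of the terminal data is consumed, is the first term of $b_n$: the three exponents $1-\frac1\alpha$ (from $\psi^{-1}$), $1$ (from the explicit factor $\lambda_n$) and $\frac{1-\alpha}{\alpha}$ (from $E_{\alpha,\alpha}$) add up to exactly $1$, so this term contributes $\lambda_n|p_n|$ and is square-summable precisely because $\sum_n\lambda_n^2|p_n|^2=\|u_T\|_1^2<\infty$. The delicate bookkeeping is to verify that none of the four Cramer contributions produces a power of $\lambda_n$ strictly larger than $1$, i.e. that the growth of $\psi^{-1}$ is always at least compensated by the decay of the corresponding Mittag-Leffler factor; the lower bound extracted from \eqref{asypsi} is exactly strong enough to make this balance work, and the worst term shows that $H_1$ is sharp for this construction.
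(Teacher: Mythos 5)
Your proposal is correct and follows essentially the same route as the paper: diagonalize via the eigenbasis, observe that the $2\times 2$ transfer matrix has determinant $\psi(\lambda_n T^\alpha)$, invoke $T\notin\Lambda$ together with the asymptotic lower bound $|\psi(t)|\gtrsim t^{\frac{1}{\alpha}-1}$ from \eqref{asypsi} (handling the finitely many small indices by continuity), and balance the resulting power $\lambda_n^{1-\frac{1}{\alpha}}$ against the Mittag-Leffler decay from Corollary \ref{est12} to land exactly on the $H_1$ norms of the terminal data. Your exponent bookkeeping matches the paper's estimates \eqref{estEa}--\eqref{esu0u1}, and your Cramer formula for $u_{1n}$ even carries the correct minus sign, which appears as a harmless typo in the paper's displayed inversion.
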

\begin{proof}
We seek initial data $u_0, u_1 \in H$ such that the corresponding solution $u$ to \eqref{fsweq} satisfies $u(T)=u_T$ and $\partial_t u(T)=v_T.$ To do this, it is enough to find the coordinates of $u_0$ and $u_1$:
$$
u_{0n}=\langle u_0, \varphi_{n}\rangle, \quad u_{1n}=\langle u_1, \varphi_{n}\rangle.
$$
According to formulas \eqref{solution formula3} and \eqref{D2}, we have:
\begin{equation}\label{eq0}
\begin{aligned}
\|u(T)\|_{1}^2+\left\|\partial_t u(T)\right\|_{1}^2
= \sum_{n=1}^{\infty}  \lambda_n^2\left(\left|a_n\right|^2+\left|b_n\right|^2\right),
\end{aligned}
\end{equation}
where 
\begin{equation}\label{sys}
\left\{\begin{array}{l}
a_{n}:=u_{0n} E_{\alpha, 1}\left(\mathrm{i}^{-\alpha}\lambda_n T^\alpha\right)+u_{1n} T E_{\alpha, 2}\left(\mathrm{i}^{-\alpha}\lambda_n T^\alpha\right), \\
b_{n}:=\mathrm{i}^{-\alpha}\lambda_n T^{\alpha-1} u_{0n} E_{\alpha, \alpha}\left(\mathrm{i}^{-\alpha}\lambda_n T^\alpha\right)+u_{1n} E_{\alpha, 1}\left(\mathrm{i}^{-\alpha}\lambda_n T^\alpha\right).
\end{array}\right.
\end{equation}
Since $\psi\left(\lambda_n T^\alpha\right) \neq 0$ for all $n \in \mathbb{N}$ ($T\notin \Lambda$), we can solve \eqref{sys} with respect to $u_{0n}$ and $u_{1n}$ (determinant is nonzero):
$$
\left\{\begin{array}{l}
u_{0n}=\frac{1}{\psi\left(\lambda_n T^\alpha\right)}\left(a_n E_{\alpha, 1}\left(\mathrm{i}^{-\alpha}\lambda_n T^\alpha\right)-b_n T E_{\alpha, 2}\left(\mathrm{i}^{-\alpha}\lambda_n T^\alpha\right)\right) \\
u_{1n}=\frac{1}{\psi\left(\lambda_n T^\alpha\right)}\left(a_n \mathrm{i}^{-\alpha}\lambda_n T^{\alpha-1} E_{\alpha, \alpha}\left(\mathrm{i}^{-\alpha}\lambda_n T^\alpha\right)+b_n E_{\alpha, 1}\left(\mathrm{i}^{-\alpha}\lambda_n T^\alpha\right)\right).
\end{array}\right.
$$
By \eqref{asyE} and \eqref{asypsi}, we can choose large constants $M>0$ such that
$$
\begin{gathered}
\left|E_{\alpha, 1}(\mathrm{i}^{-\alpha}t)\right| \leq \frac{1}{\alpha}+\frac{M_1}{t}, \quad\left|E_{\alpha, 2}(\mathrm{i}^{-\alpha}t)\right| \leq \frac{1}{\alpha} t^{-\frac{1}{\alpha}} + \frac{M_2}{t}, \\
\left|E_{\alpha, \alpha}(\mathrm{i}^{-\alpha}t)\right| \leq \frac{1}{\alpha} t^{\frac{1}{\alpha}-1} + \frac{M_3}{t^2}, \quad|\psi(t)| \geq \frac{1}{2\alpha \Gamma(2-\alpha)} t^{\frac{1}{\alpha}-1}, \quad t \geq M.
\end{gathered}
$$
Note that $\Gamma(2-\alpha)>0$. By fixing $N_0 \in \mathbb{N}$ and using \eqref{es1}-\eqref{es3},
\begin{align}
&\left|\psi\left(\lambda_n T^\alpha\right)\right| \geq \frac{1}{2 T^{\alpha-1} \Gamma(2-\alpha)} \frac{1}{\lambda_n^{1-\frac{1}{\alpha}}}, \quad n \geq N_0, \quad \left|E_{\alpha, 1}\left(\mathrm{i}^{-\alpha}\lambda_n T^\alpha\right)\right| \le C,\nonumber\\
&\left|E_{\alpha, 2}\left(\mathrm{i}^{-\alpha}\lambda_n T^\alpha\right)\right| \le \frac{C}{\lambda_n^{\frac{1}{\alpha}}}, \quad\left|\lambda_n E_{\alpha, \alpha}\left(\mathrm{i}^{-\alpha}\lambda_n T^\alpha\right)\right| \leq C \lambda_n^{\frac{1}{\alpha}}, \quad n\in \mathbb{N}. \label{estEa}
\end{align}
Therefore,
\begin{equation}\label{esu0u1}
\left|u_{0n}\right| \leq C \lambda_n^{1-\frac{1}{\alpha}}\left( \left|a_n\right|+\left|b_n\right|\right), \quad\left|u_{1n}\right| \leq C \lambda_n\left(\left|a_n\right|+\left|b_n\right|\right), \quad n \geq N_0.
\end{equation}
Since $\psi\left(\lambda_n T^\alpha\right) \neq 0$ for each $n \in \mathbb{N}$, we have
$$
\begin{aligned}
& \left|u_{0n}\right| \leq C \left(\left|a_n\right|+\left|b_n\right|\right)\max_{1 \leq n \leq N_0-1}\left|\frac{1}{\psi\left(\lambda_n T^\alpha\right)}\right|, \\
& \left|u_{1n}\right| \leq C \left(\lambda_n^{\frac{1}{\alpha}}\left|a_n\right|+\left|b_n\right|\right)\max_{1 \leq n \leq N_0-1}\left|\frac{1}{\psi\left(\lambda_n T^\alpha\right)}\right|, \qquad 1 \leq n \leq N_0-1,
\end{aligned}
$$
Then \eqref{esu0u1} is satisfied for all $n \in \mathbb{N}$. Hence
$$
\sum_{n=1}^{\infty} \left(\left|u_{0n}\right|^2+\left|u_{1n}\right|^2\right) \leq C \sum_{n=1}^{\infty}  \lambda_n^2\left(\left|a_n\right|^2+\left|b_n\right|^2\right).
$$
Using \eqref{eq0}, we obtain
$$
\|u_0\|^2+\|u_1\|^2 \leq C \left(\|u(T)\|_{1}^2+\left\|\partial_t u(T)\right\|_{1}^2\right) .
$$
\end{proof}

\section{Conclusion and open problems}\label{sec5}
We have investigated the well-posedness of forward problems associated with the fractional-in-time Schrödinger equations of order $\alpha \in (0,1)\cup (1,2)$. We have considered two models: $\mathrm{i} \partial_t^\alpha u(t) + A u(t)=0$ for $\nu=1$ and $\mathrm{i}^\alpha \partial_t^\alpha u(t) + A u(t)=0$ for $\nu=\alpha$, where each model has different properties that are relevant for physical applications. Subsequently, we address the well-posedness of the corresponding backward problems for both cases. We proved uniqueness and stability estimates for these problems under suitable assumptions. Although the results for the standard Schrödinger equation with integer derivatives are trivial, the fractional setting is rather challenging due to non-unitarity, non-Markovianity, and irreversibility.

Several promising directions for future investigation emerge from this study. We begin by discussing some open problems related to the zeros of the Mittag-Leffler function.

In Proposition \ref{zeros}, we have seen that the Mittag-Leffler function $E_{\alpha,1}(z)$ has no zeros on the imaginary axis for all $0<\alpha\le \frac{3}{5}$. We expect that this would be true for all $0<\alpha<1$. Based on Remark \ref{rmk3/5} and several numerical experiments, we have the following conjecture:
\begin{conj}
For $\alpha\in\left(\frac{3}{5}, 1\right)$, we conjecture that the Mittag-Leffler function $E_{\alpha,1}(z)$ has no zeros on the imaginary axis, i.e., $E_{\alpha,1}(\mathrm{i} t)\neq 0$ for all $t\in\mathbb{R}$.
\end{conj}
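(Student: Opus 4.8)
The plan is to reduce the complex statement ``$E_{\alpha,1}(\mathrm{i}t)\neq 0$'' to a real-variable \emph{non-simultaneous-vanishing} problem via the key formula \eqref{kf}, and then to attack the latter through the strict interlacing of the real zeros of two auxiliary Mittag-Leffler functions. First, since $E_{\alpha,1}$ has real Taylor coefficients, $\overline{E_{\alpha,1}(z)}=E_{\alpha,1}(\overline z)$, so it suffices to treat $t>0$ (the case $t=0$ is trivial since $E_{\alpha,1}(0)=1$). Moreover, Proposition \ref{zeros}-(ii) already produces a radius $R>0$ with $E_{\alpha,1}(\mathrm{i}t)\neq 0$ for $|t|>R$, so the whole difficulty is confined to the compact interval $(0,R]$. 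Replacing $t$ by $-t$ in \eqref{kf} gives
\begin{equation*}
E_{\alpha,1}(\mathrm{i}t)=\underbrace{E_{2\alpha,1}(-t^2)}_{=:P(t)}+\mathrm{i}\,\underbrace{t\,E_{2\alpha,1+\alpha}(-t^2)}_{=:Q(t)},
\end{equation*}
with $P$ even and $Q$ odd, both real on $\mathbb{R}$, so that $|E_{\alpha,1}(\mathrm{i}t)|^2=P(t)^2+Q(t)^2$. Thus the conjecture is equivalent to asserting that $P$ and $Q$ have no common zero on $(0,R]$; equivalently, that the positive zeros of $E_{2\alpha,1}(-s)$ and of $E_{2\alpha,1+\alpha}(-s)$ (with $s=t^2$) are disjoint.

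Here the complete-monotonicity argument of Proposition \ref{zeros}-(i) is genuinely unavailable: since $2\alpha\in(\tfrac65,2)$, Lemma \ref{lmcm} no longer applies, and both $P$ and $Q$ do acquire real zeros, as the numerics in Figure \ref{Fig_2} confirm. I would therefore try to establish the interlacing directly. Two routes seem natural. The first is to record the Wronskian-type quantity
\begin{equation*}
W(t):=P(t)Q'(t)-P'(t)Q(t)=|E_{\alpha,1}(\mathrm{i}t)|^2\,\frac{\mathrm{d}}{\mathrm{d}t}\arg E_{\alpha,1}(\mathrm{i}t),
\end{equation*}
and to prove that $W$ is sign-definite on $(0,R]$: at any common zero of $P$ and $Q$ one has $W=0$, so $W\neq0$ everywhere rules out simultaneous vanishing and, equivalently, forces $\arg E_{\alpha,1}(\mathrm{i}t)$ to be strictly monotone. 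Differentiating the power series of $P$ and $Q$ term by term expresses $P'$, $Q'$, and hence $W$, through Mittag-Leffler functions $E_{2\alpha,\beta}(-t^2)$ with shifted second index $\beta$, whose sign then has to be controlled. The second route is to invoke the fine theory of real zeros of $E_{2\alpha,\beta}$ with $\rho=\tfrac{1}{2\alpha}\in(\tfrac12,\tfrac56)$ from \cite{PS13,OP97} to count and separate the zeros of $P$ and $Q$ and deduce interlacing; a Hermite--Biehler reading of $P+\mathrm{i}Q$ (a ``real pair'' structure) would be the cleanest packaging, provided one can verify the requisite half-plane location of the zeros of $t\mapsto E_{\alpha,1}(\mathrm{i}t)$.

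The hard part is exactly this simultaneous non-vanishing on the bounded set $(0,R]$, and I expect no soft or global argument to suffice. The underlying reason is quantitative: the large zeros $z_k$ of $E_{\alpha,1}$ cluster along the rays $\arg z=\pm\tfrac{\pi\alpha}{2}$, which tend to $\pm\tfrac{\pi}{2}$ as $\alpha\uparrow1$, so the angular margin separating the zeros from the imaginary axis collapses and the radius $R=R(\alpha)$ degenerates. Consequently the sign-definiteness of $W$ (equivalently the strict interlacing) must be proved \emph{uniformly} in $\alpha\in(\tfrac35,1)$, which appears to require either a currently missing monotonicity estimate for Mittag-Leffler functions in the second parameter $\beta$, or a rigorous computer-assisted enclosure (interval arithmetic) of $P$ and $Q$ on $[0,R(\alpha)]$. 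This is the step I expect to be the genuine obstacle, and it is precisely why the assertion is stated as a conjecture rather than a theorem.
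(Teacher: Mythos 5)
The statement you were asked to prove is not proved in the paper at all: it is stated there as a conjecture, supported only by the partial result of Proposition \ref{zeros}-(ii) (at most finitely many zeros of $E_{\alpha,1}$ on the imaginary axis) and by the numerical evidence of Remark \ref{rmk3/5} and Figures \ref{Fig}--\ref{Fig_2}. Your proposal is correctly calibrated to this reality: you do not claim a proof, and the reduction you set up is exactly the one the paper has in mind. Your splitting $E_{\alpha,1}(\mathrm{i}t)=P(t)+\mathrm{i}\,Q(t)$ with $P(t)=E_{2\alpha,1}(-t^2)$ and $Q(t)=t\,E_{2\alpha,1+\alpha}(-t^2)$ via the key formula \eqref{kf} is precisely the paper's observation that the conjecture amounts to showing the real and imaginary parts never vanish simultaneously (which is what Figure \ref{Fig_2} illustrates numerically), and your use of Proposition \ref{zeros}-(ii) to confine the problem to a compact interval $(0,R]$ mirrors the paper's own proof that the zero set is finite. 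You also correctly note that Lemma \ref{lmcm} is unavailable since $2\alpha>1$, which is exactly why the argument that works for $\alpha\le\frac{3}{5}$ breaks down here.

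What you add beyond the paper --- the Wronskian $W=PQ'-P'Q$ as a sign-definite obstruction to common zeros, the interlacing/Hermite--Biehler reading of the pair $(P,Q)$, and the diagnosis that any estimate must be uniform in $\alpha$ because the rays $\arg z=\pm\frac{\pi\alpha}{2}$ carrying the large zeros collapse onto the imaginary axis as $\alpha\uparrow 1$ --- is sensible and identifies the genuine obstruction. But be clear that none of these routes is carried out: sign-definiteness of $W$ on $(0,R(\alpha)]$ is exactly as open as the conjecture itself, and the zero-distribution results of \cite{PS13,OP97} that the paper invokes elsewhere do not by themselves separate the zeros of $E_{2\alpha,1}(-s)$ from those of $E_{2\alpha,1+\alpha}(-s)$. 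So, viewed as a proof, your proposal is incomplete --- the missing step is precisely the simultaneous non-vanishing on the compact set --- but this is not a defect relative to the paper, which leaves the same step open; what you have written is an honest research plan for an open problem, not a failed proof of a known theorem.
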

If this conjecture is true, the backward problem for the fractional Schrödinger equation when $\nu=1$ and $0<\alpha<1$ will be well-posed for all $T>0$ (not necessarily large as in Theorem \ref{bu}, Case (ii)). Note that most works in the literature concern the real zeros of the Mittag-Leffler functions. We refer to \cite{Se0,Se04,Ps05,PS13} for more details on this topic.

Similarly for the function $\psi(t)$ defined in \eqref{psidf} and based on Remark \ref{psize}, we have the following conjecture:
\begin{conj}
For $\alpha\in (1,2)$, we conjecture that the function $\psi(t)$ has no zeros on $(0,\infty)$, i.e., $\psi(t)\neq 0$ for all $t>0$.
\end{conj}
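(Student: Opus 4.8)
The plan is to exploit the fact that $\psi$ is, up to reparametrization, a \emph{Wronskian} of the scalar fractional equation underlying \eqref{fsweq}. Writing $\omega=\mathrm{i}^{-\alpha}$, the two fundamental solutions of $\mathrm{i}^\alpha\partial_t^\alpha y+y=0$ are $g(t)=E_{\alpha,1}(\omega t^\alpha)$ and $h(t)=tE_{\alpha,2}(\omega t^\alpha)$, normalized by $g(0)=1$, $g'(0)=0$, $h(0)=0$, $h'(0)=1$. By \eqref{derivative1_ML}--\eqref{derivative2_ML} one has $h'=g$ and $g'=\omega t^{\alpha-1}E_{\alpha,\alpha}(\omega t^\alpha)$, so the Wronskian $W(t):=g(t)h'(t)-g'(t)h(t)$ satisfies
\[
W(t)=E_{\alpha,1}(\omega t^\alpha)^2-\omega t^\alpha E_{\alpha,2}(\omega t^\alpha)E_{\alpha,\alpha}(\omega t^\alpha)=\psi(t^\alpha).
\]
Since $t\mapsto t^\alpha$ is a bijection of $(0,\infty)$, the conjecture $\psi\neq 0$ on $(0,\infty)$ is equivalent to the statement that this Wronskian never vanishes for $t>0$. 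First I would record the boundary reductions that trivialize the two ends of the range: $\psi(0^+)=E_{\alpha,1}(0)^2=1\neq 0$, while Lemma \ref{zerospsi} confines all zeros to a compact interval $(0,R]$, on which analyticity of $\psi$ forces them to be finite in number. Thus only the intermediate regime remains.

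For the core step I would attempt a deformation in the parameter $\alpha$. At the endpoint $\alpha=2$ the equation is the classical harmonic oscillator: $E_{2,1}(-s)=\cos\sqrt s$ and $E_{2,2}(-s)=\sin\sqrt s/\sqrt s$, and a direct computation gives $\psi\equiv\cos^2\sqrt s+\sin^2\sqrt s=1$, so the Wronskian is the classical nonvanishing constant. Consistently, the leading coefficients in \eqref{asypsi} carry the factors $1/\Gamma(2-\alpha)$ and $1/\Gamma(1-\alpha)$, both of which vanish as $\alpha\to 2^-$, so $\psi$ converges to $1$ and is bounded away from $0$ near $\alpha=2$; near $\alpha\to1^+$ the same expansion degenerates to $\psi\sim e^{-\mathrm{i}t^{1/\alpha}}$, again zero-free. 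The plan is then to track the (finitely many, compactly confined) zeros of $\psi(\cdot\,;\alpha)$ as $\alpha$ decreases from $2$ to $1$ and to show, by a continuity/argument-principle argument based on the uniform asymptotics \eqref{asypsi} and the bounds \eqref{es1}--\eqref{es4}, that no zero can enter the positive axis: neither escaping to or arriving from $t=0$ (blocked by $\psi(0)=1$), nor from $t=\infty$ (blocked by Lemma \ref{zerospsi}, with $R=R(\alpha)$ controlled uniformly), nor bifurcating in the interior.

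The hard part will be the interior of the compact interval $(0,R]$, where none of the tools used elsewhere in the paper apply directly. Unlike the case $\nu=1$, $0<\alpha\le\frac{3}{5}$, the complete-monotonicity argument (Lemma \ref{lmcm}) and the real-zero theorems invoked in Propositions \ref{zeros}--\ref{zeros2} are unavailable, because $\psi$ is a genuinely complex combination of three Mittag-Leffler functions evaluated on the ray $\arg z=-\frac{\pi\alpha}{2}$, and, as Remark \ref{psize} and the figures indicate, its real and imaginary parts do vanish individually. The deepest obstruction is structural: in the integer-order case the Wronskian of $y''+y=0$ is constant by Abel's identity, whereas here the fractional order $\alpha\in(1,2)$ destroys this---$\psi$ is genuinely nonconstant and in fact decays like $t^{1/\alpha-1}$ by \eqref{asypsi}---because the fractional Leibniz rule is an infinite series rather than a product rule, so there is no clean first-order equation for $W$ from which nonvanishing could be read off algebraically. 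Overcoming this would require either a hidden exact identity for the combination \eqref{psidf} (an analogue of the splitting \eqref{kf}) that isolates a sign-definite factor, or a quantitative lower bound for $|\psi|$ on $(0,R]$ uniform in $\alpha$; producing either is, I expect, the crux that currently leaves the statement at the level of a conjecture.
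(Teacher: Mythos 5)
This statement is one of the paper's stated \emph{conjectures}: the authors give no proof of it at all, only numerical evidence (Remark \ref{psize} and Figures \ref{Fig_1}, \ref{Fig_22}), so there is no proof in the paper to compare yours against. Your proposal is likewise not a proof, as you acknowledge in your final paragraph; what can be checked is whether your partial reductions are sound and whether your proposed route could plausibly close the gap. The reductions are correct: with $\omega=\mathrm{i}^{-\alpha}$, the identities \eqref{derivative1_ML}--\eqref{derivative2_ML} do give $h'=g$ and $g'=\omega t^{\alpha-1}E_{\alpha,\alpha}(\omega t^{\alpha})$, so $gh'-g'h=\psi(t^{\alpha})$, which is exactly the determinant of the system \eqref{sys}; the endpoint computations $\psi(\cdot\,;2)\equiv 1$ and $\psi(t;1)=e^{-\mathrm{i}t}$ are correct; and Lemma \ref{zerospsi} plus analyticity of $\psi$ does confine all positive zeros to a finite subset of a compact interval.

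The genuine gap is the deformation-in-$\alpha$ step, and it is not merely unfinished but structurally insufficient, for a reason the paper itself points out at the end of Section \ref{sec5}. Your homotopy skeleton applies verbatim to the time-fractional diffusion-wave equation, i.e.\ with $\mathrm{i}^{-\alpha}$ replaced by $-1$: there too the endpoints are zero-free (at $\alpha=1$ the corresponding function is $e^{-t}$, at $\alpha=2$ it is the constant Wronskian $1$), and yet for intermediate $\alpha\in(1,2)$ the corresponding function is \emph{known} to have positive zeros (Lemma 1.2 and Theorem 1.3 of \cite{FY20}). So zeros demonstrably can be born in the interior of the positive axis under exactly this kind of deformation: since $\psi$ is complex-valued with no conjugate symmetry, complex zeros can drift onto $(0,\infty)$ as $\alpha$ varies, and neither real-interval continuity nor an argument-principle count in a fixed complex neighborhood rules this out unless you already control $|\psi|$ from below there --- which is the very thing to be proved. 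There is also a secondary uniformity failure at the endpoint you deform from: the lower bound extracted from \eqref{asypsi} has coefficient $\frac{1}{2\alpha\Gamma(2-\alpha)}\to 0$ as $\alpha\to 2^{-}$, so $R(\alpha)$ in Lemma \ref{zerospsi} is not controlled uniformly and ``no zeros enter from infinity'' is not established near $\alpha=2$. The correct conclusion, which your last paragraph essentially reaches, is that any proof must exploit the specific ray $\arg z=-\frac{\pi\alpha}{2}$ (as Proposition \ref{zeros2} does for $E_{\alpha,1}$ via \cite{OP97}, or as \eqref{kf} does for $\nu=1$), not just the algebraic Wronskian structure, since that structure is identical in the real case where the conclusion is false.
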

If this conjecture is true, the backward problem for the fractional Schrödinger equation when $\nu=\alpha$ and $1<\alpha<2$ will be well-posed for all $T>0$. This would be different from the fractional diffusion-wave equation, for which we know the existence of zeros of the corresponding function. See Lemma 1.2 and Theorem 1.3 in \cite{FY20} for the details.

Another interesting property to be explored is the logarithmic convexity for time-fractional Schrödinger equations. For instance, we expect the following estimate to hold for all $0<\alpha \leq \frac{1}{2}$:
\begin{equation}\label{lce}
\|u(t)\| \leq \|u(0)\|^{1-\frac{t}{T}}\|u(T)\|^{\frac{t}{T}}, \quad 0 \leq t \leq T,
\end{equation}
where $u$ is the weak solution to \eqref{fseq}.

For this purpose, we can follow \cite[Theorem 1.1]{Ch24} using the key formula \eqref{kf}, but we need to prove the following conjecture:
\begin{conj}
Let $0<\alpha \leq \frac{1}{2}$. The function
\begin{equation}
f(t)=|E_{2 \alpha,1}\left(-t\right)|^2 + t |E_{2 \alpha, 1+\alpha}\left(-t\right)|^2
\end{equation}
is completely monotone (or at least log-convex) for $t>0$.
\end{conj}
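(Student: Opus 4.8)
The plan is to read the statement through the key formula \eqref{kf}, which shows that $f(t)=|E_{\alpha,1}(-\mathrm{i}\sqrt{t})|^2$, and then to combine this with the complete monotonicity already supplied by Lemma \ref{lmcm}. Write $g(t)=E_{2\alpha,1}(-t)$ and $h(t)=E_{2\alpha,1+\alpha}(-t)$; both are real-valued on $(0,\infty)$, so $f(t)=g(t)^2+t\,h(t)^2$. Since $0<2\alpha\le 1$ and $1\ge 2\alpha$, $1+\alpha\ge 2\alpha$, Lemma \ref{lmcm} gives that $g$ and $h$ are completely monotone, so by Bernstein's theorem there are positive measures $\mu_g,\mu_h$ with $g(t)=\int_0^\infty e^{-ts}\,\d\mu_g(s)$ and $h(t)=\int_0^\infty e^{-ts}\,\d\mu_h(s)$. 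As products and sums of completely monotone functions are completely monotone, the squared terms $g^2$ and $h^2$ are completely monotone, with representing measures the convolutions $\mu_g*\mu_g$ and $\mu_h*\mu_h$.

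First I would isolate the genuinely delicate term $t\,h(t)^2$. Writing $\mu_h*\mu_h$ with density $N$ on $(0,\infty)$ and integrating by parts,
\[
t\,h(t)^2 = t\int_0^\infty e^{-ts}N(s)\,\d s = \int_0^\infty e^{-ts}N'(s)\,\d s + N(0^+),
\]
where the boundary term $N(0^+)$ vanishes because $t\,h(t)^2\to 0$ as $t\to\infty$ (from the asymptotics \eqref{asymptotic expansions}, $h(t)\sim \frac{1}{\Gamma(1-\alpha)\,t}$). Hence $f$ admits the representation $f(t)=\int_0^\infty e^{-ts}\big[(\mu_g*\mu_g)(s)+N'(s)\big]\,\d s$, and by Bernstein's theorem $f$ is completely monotone if and only if the bracketed density is nonnegative on $(0,\infty)$. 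The main obstacle is exactly this positivity: multiplication by $t$ replaces the nonnegative density $N$ by its derivative $N'$, which changes sign, so complete monotonicity of the isolated term $t\,h^2$ fails and one must show that the contribution of $g^2$ compensates. Establishing this requires a hidden algebraic relation between $\mu_g$ and $\mu_h$ — ultimately a reflection of the fact that $g$ and $t\,h$ arise as real and imaginary parts of the single Mittag-Leffler function in \eqref{kf} — and I expect this compensation to be the crux of the whole argument.

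If complete monotonicity proves out of reach, I would retreat to the weaker log-convexity, which is all that \eqref{lce} needs, and where two features help. Log-convexity is preserved under addition, so it suffices to treat $g^2$ and $t\,h^2$ separately; moreover $f(t)=|E_{\alpha,1}(-\mathrm{i}\sqrt t)|^2$ can be read probabilistically as $f(t)=\mathbb{E}\big[e^{-\mathrm{i}\sqrt t\,(E-E')}\big]$, where $E,E'$ are independent copies of the inverse $\alpha$-stable subordinator at time $1$ (using $E_{\alpha,1}(-\mathrm{i}w)=\mathbb{E}[e^{-\mathrm{i}wE}]$). Complete monotonicity of $f$ is then equivalent to $|E_{\alpha,1}(-\mathrm{i}w)|^2$ being a Gaussian scale mixture in $w$, i.e.\ to $E-E'$ being a normal variance mixture, which is what singles out the range $0<\alpha\le\frac12$. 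This viewpoint suggests attacking log-convexity directly through the inequality $f f''\ge (f')^2$, expanding $f'$ and $f''$ via the termwise identities \eqref{derivative0_ML}–\eqref{derivative1_ML} together with the Mittag-Leffler recurrences, and reducing the inequality to a Cauchy–Schwarz estimate for the positive measures $\mu_g*\mu_g$ and $\mu_h*\mu_h$; a partial result here would already close the argument by additivity, and numerical evidence supports both complete monotonicity and log-convexity of $f$ for all $0<\alpha\le\frac12$.
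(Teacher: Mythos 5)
You are attempting to prove what is, in the paper, an \emph{open} conjecture: the statement is the last conjecture in Section \ref{sec5}, the authors offer only numerical evidence (Figure \ref{Fig_CM}), and they state explicitly that proving it is precisely what is missing in order to obtain the logarithmic convexity estimate \eqref{lce}. So there is no proof in the paper to compare against, and to succeed your argument would have to settle the open problem outright — which it does not. Your reduction is a correct reformulation: writing $f(t)=g(t)^2+t\,h(t)^2=|E_{\alpha,1}(-\mathrm{i}\sqrt{t}\,)|^2$ via \eqref{kf}, invoking Lemma \ref{lmcm} and Bernstein for $g^2$ and $h^2$, and integrating by parts to trade multiplication by $t$ for differentiation of the density (modulo unaddressed technical points: you assume $\mu_h*\mu_h$ has a differentiable density $N$ with integrable derivative and $N(0^+)$ well defined, none of which is established). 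But the outcome of that reduction is only an equivalence: $f$ is completely monotone iff $(\mu_g*\mu_g)(s)+N'(s)\ge 0$. You then declare this positivity to be ``the crux of the whole argument'' and leave it open. That positivity \emph{is} the conjecture; nothing has been proven.

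The fallback route is not just incomplete but structurally broken. Sums of log-convex functions are indeed log-convex, but the term-by-term strategy cannot work, because $t\,h(t)^2$ is \emph{not} log-convex — the paper says this in the sentence immediately following the conjecture. Concretely,
\begin{equation*}
\frac{\d^2}{\d t^2}\log\bigl(t\,h(t)^2\bigr) \;=\; -\frac{1}{t^2}+2\,(\log h)''(t),
\end{equation*}
and since $h$ is analytic with $h(0)=1/\Gamma(1+\alpha)>0$, the term $(\log h)''$ stays bounded as $t\to 0^+$ while $-t^{-2}\to-\infty$; log-convexity fails near the origin. Hence your claim that ``a partial result here would already close the argument by additivity'' is false: any proof of log-convexity of $f$ must exploit cancellation between the two terms — exactly the compensation you yourself identified as the obstacle in the complete-monotonicity half. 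The probabilistic reading ($f$ CM iff $E-E'$ is a normal variance mixture) is again an equivalent restatement, not an argument. In short, your proposal maps the difficulty accurately, but it proves neither complete monotonicity nor log-convexity, and its only concrete closing mechanism (additivity of log-convexity) is ruled out by the paper's own observation.
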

Using Lemma \ref{lmcm}, we know that $|E_{2 \alpha,1}\left(-t\right)|^2$ and $|E_{2 \alpha, 1+\alpha}\left(-t\right)|^2$ are completely monotone for $t>0$ (because $0<2\alpha \le 1$). However, $t |E_{2 \alpha, 1+\alpha}\left(-t\right)|^2$ is not log-convex for $t>0$. Therefore, one needs to consider the whole function $f(t)$. See the plot in Figure \ref{Fig_CM} for $\alpha=0.35$.

\begin{figure}[H]
\centering
\includegraphics[scale=0.6]{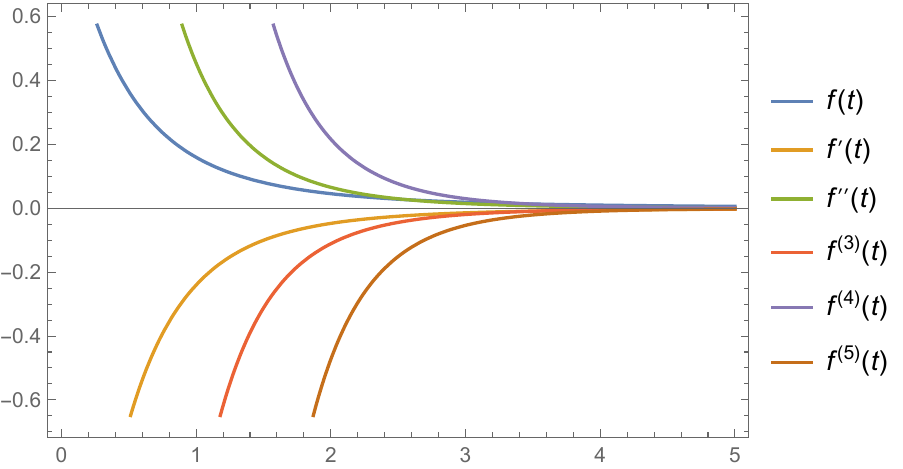}
\caption{First derivatives of $f(t)$ for $\alpha=0.35$.}
\label{Fig_CM}
\end{figure}

\subsection*{Acknowledgment}
The work was supported by Grant-in-Aid for Challenging Research (Pioneering) 21K18142 of Japan Society for the Promotion of Science.

\end{document}